\def\X{\mathbf{X}}
\def\Y{\mathbf{Y}}
\def\Z{\mathbf{Z}}
\def\fai{\pmb{\phi}}
\def\E{\mathbf{E}}
\def\k{\mathbf{k}}
\newtheorem{remark}{Remark}[section] 
\title{Nonlocal Diffusion Operators for Normal and Anomalous Dynamics  \thanks{This work was partially supported by NSFC 11421101, 11421110001 and 11671182.}}
\author{ Weihua Deng\thanks{School of Mathematics and Statistics, Gansu Key Laboratory of Applied Mathematics and Complex Systems, Lanzhou University, Lanzhou 730000, P.R. China. Email: dengwh@lzu.edu.cn}
  \and
   Xudong Wang\thanks{School of Mathematics and Statistics, Gansu Key Laboratory of Applied Mathematics and Complex Systems, Lanzhou University, Lanzhou 730000, P.R. China. Email: xdwang14@lzu.edu.cn}
  \and
  Pingwen Zhang\thanks{School of Mathematical Sciences, Laboratory of Mathematics and Applied Mathematics, Peking University, Beijing 100871,  P.R. China. Email: pzhang@pku.edu.cn}
}
\begin{document}

\maketitle

\begin{abstract}
The Laplacian $\Delta$ is the infinitesimal generator of isotropic Brownian motion, being the limit process of normal diffusion, while the fractional Laplacian $\Delta^{\beta/2}$ serves as the infinitesimal generator of the limit process of isotropic L\'{e}vy process. Taking limit, in some sense, means that the operators can approximate the physical process well after sufficient long time. We introduce the nonlocal operators (being effective from the starting time), which describe the general processes undergoing normal diffusion. For anomalous diffusion, we extend to the anisotropic fractional Laplacian $\Delta_m^{\beta/2}$ and the tempered one $\Delta_m^{\beta/2,\lambda}$ in $\mathbb{R}^n$. Their definitions are proved to be equivalent to an alternative one in Fourier space. Based on these new nonlocal diffusion operators, we further derive the deterministic governing equations of some interesting statistical observables of the very general jump processes with multiple internal states. Finally, we consider the associated initial and boundary value problems and prove their well-posedness of the Galerkin weak formulation in $\mathbb{R}^n$. To obtain the coercivity, we claim that the probability density function $m(\Y)$ should be nondegenerate.
\end{abstract}

\begin{keywords}
Jump processes; Nonlocal normal diffusion; Anisotropic anomalous diffusion; Tempered L\'evy flight; Multiple internal states; Well-posedness
\end{keywords}

\pagestyle{myheadings}
\thispagestyle{plain}


\section{Introduction}\label{Sec1}

Diffusion phenomena are ubiquitous in the natural world, which describe the net movements of the microscopical molecules or atoms from a region of high concentration to a region of low concentration. The speed of diffusion can be characterized by the second moment of the particle trajectories $\langle x^2(t)\rangle\thicksim t^\alpha$. It is called normal diffusion if $\alpha=1$ and anomalous diffusion \cite{Metzler:00,Sancho:04} if $\alpha\neq1$. The scaling limits of all the processes undergoing normal diffusion are Brownian motion. But without the scaling limits, most of the time, they are pure jump processes. For anomalous diffusion, the processes are always characterized by long-range correlation or broad distribution. The former includes fractional Brownian motion \cite{Meerschaert:12} and tempered fractional Brownian motion \cite{Chen:17,Meerschaert:13}, while the latter contains the processes with long tailed waiting time or jump length. In the framework of continuous time random walks (CTRWs) \cite{Klafter:11,Montroll:65}, any one of the first moment of waiting time and the second moment of jump length diverging leads to the anomalous dynamics. If we extend to the processes with multiple internal states \cite{Xu:17}, then the diffusion phenomena will depend on the distribution of each internal states, transition matrix and initial distribution, involving more complex dynamics.

There are many microscopic/stochastic models to describe normal and anomalous diffusions and many different ways of deriving the macroscopic/deterministic equations governing the probability distribution functions of some particular statistical observables of the stochastic processes. For normal diffusion, in mathematical community, most people are more familiar with the deriving procedure based on the law of mass conservation and the assumption of Fick's law.
The commonly used stochastic models include CTRWs, Langevin type equations, and L\'{e}vy processes. The CTRWs consist of two important random variables, i.e., the waiting time $\xi$ and jump length $\eta$. If both the first moment of waiting time $\langle \xi\rangle$ and the second moment of jump length $\langle \eta^2\rangle$ are finite, after taking the scaling limit, the CTRWs converge to Brownian motion.
On the contrary, if $\langle \xi\rangle$ diverges and  $\langle \eta^2\rangle$  is finite, the CTRW describes subdiffusion, while it characterizes superdiffusion if $\langle \xi\rangle$ is bounded and   $\langle \eta^2\rangle$ infinite; if both  $\langle \xi\rangle$ and $\langle \eta^2\rangle$ are unbounded, the type of diffusions is possible to be subdiffusion, superdiffusion, or even normal diffusion, depending on the dominant role played by $\xi$ or $\eta$ or that $\xi$ and $\eta$ are balanced each other.
%
Two of the most important CTRW models undergoing anomalous diffusion are L\'{e}vy flights and L\'{e}vy walks. For L\'{e}vy flights, the $\xi$ with finite first moment and $\eta$ with infinite second moment are independent, and the divergence of the second moment of $\eta$ makes the processes propagate with infinite speed. Therefore, the physical realizations of such processes are quite hard and then rare.
L\'{e}vy walks \cite{Zaburdaev:15} can remedy the divergence of the second moment of jump length $\langle \eta^2\rangle$ by coupling the distribution of $\xi$ and $\eta$. This gives rise to a class of space-time coupled processes. Different from L\'{e}vy walks, another idea to bound the second moment is to truncate the long tailed probability distribution of jump length \cite{Meerschaert:12,Mantegna:94}, i.e., modify $|\X|^{-n-\beta}$ as $e^{-\lambda|\X|}|\X|^{-n-\beta}$ with $\lambda$ being a small positive constant, leading to the tempered L\'{e}vy flights, which have the advantage of still being an infinitely divisible L\'{e}vy process. The Langevin type equations are built based on Newton's second law with noise as random forces, and the CTRW models also have their corresponding Langevin pictures \cite{Fogedby:94}. Sometimes, it is convenient to use this type of models if the external potentials are considered.

Another way to describe anomalous diffusion is the L\'{e}vy process (subordinated L\'{e}vy process, and inverse subordinated L\'{e}vy process). It is defined by its characteristic function and more convenient to deal with the stochastic process in high dimensional space. According to the L\'{e}vy-Khintchine formula \cite{Applebaum:09}, the characteristic function of L\'{e}vy process has a specific form
\begin{equation}\label{LevyCharfunction}
  \E(e^{i\k\cdot\X})=\int_{\mathbb{R}^n}e^{i\k\cdot\X}p(\X,t)d\X=e^{t \Phi(\k)},
\end{equation}
where
\begin{equation}\label{LKformula}
  \Phi(\k)=i\k\cdot\mathbf{b}-\frac{1}{2}\k\cdot\mathbf{a}\k + \int_{\mathbb{R}^n\backslash\{0\}} \Big[e^{i\k\cdot\Y}-1-i\k\cdot\Y_{\chi_{\{|\Y|<1\}}}\Big]\nu(d\Y),
\end{equation}
with $\mathbf{b}\in\mathbb{R}^n$, and $\mathbf{a}$ is a positive definite symmetric $n\times n$ matrix, $\chi_I$ is the indicator function of the set $I$, $\nu$ is a finite L\'{e}vy measure on $\mathbb{R}^n\backslash\{0\}$, implying that $\int_{\mathbb{R}^n\backslash\{0\}}\min\{1,|\Y|^2\}\nu(d\Y)<\infty$.
If we take $\mathbf{a}$ and $\mathbf{b}$ to be zero and $\nu$ to be a rotationally symmetric (tempered) $\beta$-stable L\'{e}vy measure
\begin{equation}\label{rot_sym}
  \nu(d\Y)= c_{n,\beta} |\Y|^{-n-\beta} d\Y \quad \textrm{or} \quad  \nu(d\Y)= c_{n,\beta,\lambda} e^{-\lambda|\Y|}|\Y|^{-n-\beta} d\Y,
\end{equation}
then its probability density function (PDF) of the position of the particles solves
\begin{equation}
  \frac{\partial p(\X,t)}{\partial t} = \Delta^{\beta/2} \,p(\X,t) \quad \textrm{or} \quad \frac{\partial p(\X,t)}{\partial t} = \Delta^{\beta/2,\lambda}\, p(\X,t),
\end{equation}
where the operators $\Delta^{\beta/2}$ and $\Delta^{\beta/2,\lambda}$ are defined in \cite[Eq. 34]{Deng:17} by Fourier transform $\hat{g}(\k):=\mathscr{F}[g(\X)](\k)=\int_{\mathbb{R}^n}e^{i\k\cdot\X}g(\X)d\X$ with
\begin{equation}\label{Lap-tempLap}
\begin{aligned}
  & \mathscr{F}[\Delta^{\beta/2}g(\X)]=-|\k|^\beta\hat{g}(\k) \textrm{ and }
  \\
&
  \mathscr{F}[\Delta^{\beta/2,\lambda}g(\X)]= (-1)^{\lceil\beta\rceil} \Big((\lambda^2+|\k|^2)^{\beta/2}-\lambda^\beta+\mathcal{O}(|\k|^2)\Big)\hat{g}(\k);
\end{aligned}
\end{equation}
here $\beta\in(0,1)\cup(1,2)$, and ${\lceil\beta\rceil}$ denotes the smallest integer that is bigger than or equal to $\beta$.
A similar operator $(\lambda^{1/\beta}+|\k|^2)^\beta-\lambda$ appears in \cite[Eq. 3]{Ryznar:02}, where the only difference is the term $\mathcal{O}(|\k|^2)$. However, their physical background is completely different. The term $\mathcal{O}(|\k|^2)$ in \eqref{Lap-tempLap} is strictly derived in \cite[Eq. 34]{Deng:17}, where we consider the compound Poisson processes with tempered power law jump lengths, i.e., take the L\'{e}vy measure $\nu(d\Y)$ to be $e^{-\lambda|\Y|}|\Y|^{-n-\beta}$. But for the formula in \cite[Eq. 3]{Ryznar:02}, it is inspired by the Schr\"{o}dinger operator with the free Hamiltonian of the form $H_0 = (\lambda^2-\Delta)^{1/2}-\lambda$ in \cite{Carmona:90}, and naturally extended to the form $(\lambda^{1/\beta}+|\k|^2)^\beta-\lambda$ with fractional order $\beta$.

The two equations in \eqref{Lap-tempLap} describe the isotropic movements of microscopic particles with (tempered) L\'{e}vy distribution. At the same time, in the natural world, anisotropic motions are very popular. So we need to develop  models for characterizing the corresponding physical reality.
Compte \cite{Compte:97} generalized the scheme of CTRWs and showed the diffusion-advection equation and the mean square displacement in three kinds of shear flows. Meerschaert et al \cite{Meerschaert:99} made an extension to higher dimensions and provided an operator being mixture of directional derivatives taken in each radial direction, where the operator was directly given in Fourier space and the associated fractional advection-dispersion equation was derived. Ervin and Roop \cite{Ervin:07} discussed directional integral and directional differential operators in two dimensions, and defined the appropriate fractional directional derivative spaces. For more details we refer the interested readers to these literatures and the references cited therein. In this paper, we start from the compound Poisson process to discuss more general nonlocal normal diffusion and anomalous diffusion. It is well known that, most of the time,  anomalous diffusions are described by nonlocal differential equations.
But for normal diffusion, a compound Poisson process with Gaussian jumps indeed leads to a nonlocal differential equations. For the isotropic movement and the movement just allowed in axis directions, their associated diffusion equations are different, though the scaling limit makes them become the same classical diffusion equation.
%
%
For the nonlocal normal diffusion, we still can discuss the problem of escape probability \cite{Chechkin:03,DengWW:17,Koren:07} and the way of specifying the boundary conditions of their corresponding macroscopic equations is the same as the models for anomalous diffusion. We also discuss the anomalous diffusion undergoing anisotropic movements in $\mathbb{R}^n$, and derive the associated diffusion equations with anisotropic tempered fractional Laplacian $\Delta_m^{\beta/2,\lambda}$ ($\lambda=0$ corresponds to the one without tempering and the subscript $m$ means that this new operator depends on the probability density function $m(\theta)$ or $m(\Y)$ first appeared in \eqref{def_m}).
Similar to the operator in \cite[Eq. 2]{Meerschaert:99}, we also give the tempered one in Fourier space and show its equivalence with the just derived one $\Delta_m^{\beta/2,\lambda}$. Then we discuss the space fractional partial differential equations (PDEs) with the anisotropic tempered fractional Laplacian $\Delta_m^{\beta/2,\lambda}$ in $\mathbb{R}^n$, endowed with generalized Dirichlet and Neumann boundary conditions, and prove  their well-posedness. One of the key requests is to have the coercivity of the variational formulation of the PDEs in $\mathbb{R}^n$, being proved by the technique in $\mathbb{R}^1$ presented in \cite{Zhang:17} under some assumptions on the probability density function $m(\Y)$.


All the models mentioned above are for the diffusion with single internal state, implying that the processes have the same distributions of waiting time $\xi$ and jump length $\eta$ throughout the time. Intrigued by applications, e.g., the particles moving in multiphase viscous liquid composed of materials with different chemical properties, we further generalize the processes with multiple internal states.
In fact, the case of two internal states is considered in \cite{Godec:17,Pollak:93} with applications, including trapping in amorphous semiconductors, electronic burst noise, movement in systems with fractal boundaries, the digital generation of $1/f$ noise, and ionic currents in cell membranes; Niemann et al \cite{Niemann:16} detailedly investigate a stochastic signal with multiple states, in which each state has an associated joint distribution for the signal's intensity and its holding time. Xu and Deng \cite{Xu:17} derived the Fokker-Planck and Feymann-Kac \cite{Turgeman:09,Wu:16} equations for the particles undergoing the anomalous diffusion with multiple temporal internal states. Here, we further present the fractional Fokker-Planck and Feymann-Kac equations with multiple internal states, both temporally and spatially.

The rest of this paper is organized as follows. In Section 2, we show two kinds of processes with Gaussian jumps, leading to different nonlocal macroscopic equations describing normal diffusions. More general anisotropic processes undergoing anomalous diffusions are discussed in Section 3, and we also give two kinds of definitions of anisotropic (tempered) fractional Laplacian for two different motivations and prove their equivalences.  In Section 4, the fractional Fokker-Planck and Feymann-Kac equations of anisotropic (tempered) fractional Laplacian with multiple internal states are derived.  The initial and boundary value problems with generalized Dirichlet and Neumann boundary conditions are given in Section 5, and their well-posednesses are proved in Section 6. We conclude the paper with some discussions in the last section.

\section{Nonlocal normal diffusion}\label{Sec2}

As all we know, except Brownian motion with drift, the paths of all other proper L\'{e}vy processes are discontinuous. From the viewpoint of \cite{Dybiec:06,Deng:17}, the PDEs governing the PDFs of these processes should be endowed with the generalized boundary conditions, since the  boundary $\partial\Omega$ itself can not be hit by the majority of discontinuous sample trajectories. For nonlocal normal diffusion, it is a pure jump process with Gaussian jumps.
Therefore, the boundary conditions of their corresponding PDEs should be specified on the domain $\mathbb{R}^n\backslash\Omega$. By the central limit theorem, the scaling limits of all these processes are Brownian motion. But without scaling limit, these processes are different and should be distinguished.

Now we consider the compound Poisson process with Gaussian jump length, in which Poisson process is taken as the renewal process. We denote  Poisson process by $N(t)$ satisfying $P\{N(t)=n\}=\frac{(\zeta t)^n}{n!}e^{-\zeta t}$, where the rate $\zeta>0$ denotes the mean number of jumps per unit time. Then the compound Poisson process is defined as $\X(t)=\sum_{j=0}^{N(t)}\X_j$, where $\X_j$ are i.i.d. random variables and their length obeys  Gaussian distribution. The characteristic function of $\X(t)$ has a specific form as \cite[Eq. 9]{Deng:17}
\begin{equation}\label{char_X}
  \E(e^{i\k\cdot\X})=\int_{\mathbb{R}^n}e^{i\k\cdot\X}p(\X,t)d\X=e^{\zeta t (\Phi_0(\k)-1)},
\end{equation}
where $\Phi_0(\k)=\E(e^{i\k\cdot\X_j}), j=0,1,\cdots$,N(t). Denoting the probability measure of the jump length $\X_j$ by $\nu(d\Y)$, we have
\begin{equation}\label{LK}
  \Phi_0(\k)-1=\int_{\mathbb{R}^n} (e^{i\k\cdot\Y}-1) \nu(d\Y),
\end{equation}
which is the same as the L\'{e}vy-Khintchine formula \eqref{LKformula} by taking $\mathbf{a}=0$ and $\mathbf{b}'=0$ ($\mathbf{b}'$ contains $\mathbf{b}$ and the third term in the integral of \eqref{LKformula}).
Although the length of $\X_j$ obeys Gaussian distribution, the distribution of the direction of the movement has many different  choices. Here, we consider two specific cases in two dimensional space, and derive their corresponding deterministic equations governing the PDF of position of the particles undergoing normal diffusion. The first case is that the particles spread uniformly in all directions while the second one is that the particles move only in horizontal and vertical direction.
Considering the definition of Fourier transform and \eqref{char_X}, we have
\begin{equation}
  \hat{p}(\k,t)=e^{\zeta t (\Phi_0(\k)-1)},
\end{equation}
which implies that the equation in $\k$ space is
\begin{equation}\label{kspace}
  \frac{\partial\hat{p}(\k,t)}{\partial t}=\zeta(\Phi_0(\k)-1)\hat{p}(\k,t).
\end{equation}
Next, we give the specific expressions of $\Phi_0(\k)$ (or $\nu(d\Y)$) for these two cases.

{\bf Case 1}: Since the particles spread uniformly in all directions, $\nu(d\Y)$ is taken as
\begin{equation*}
  \nu(d\Y)=\frac{1}{2\pi\sigma^2}e^{-\frac{|\Y|^2}{2\sigma^2}}d\Y,
\end{equation*}
where $\sigma^2$ is the variance.
Then we obtain
\begin{equation}\label{char_case1}
  \Phi_0(\k)-1=e^{-\frac{1}{2}\sigma^2|\k|^2}-1,
\end{equation}
which implies
\begin{equation}\label{macro_case1}
  \frac{\partial p(\X,t)}{\partial t}=-\frac{\zeta}{2\pi\sigma^2}\int_{\mathbb{R}^2}e^{-\frac{|\X-\Y|^2}{2\sigma^2}}(p(\X,t)-p(\Y,t))d\Y,
\end{equation}
by taking the inverse Fourier transform
\begin{equation*}
  \begin{split}
    \mathscr{F}^{-1}[(\Phi_0(\k)-1)\hat{p}(\k,t)] &=\mathscr{F}^{-1}[\Phi_0(\k)\hat{p}(\k,t)]-\mathscr{F}^{-1}[\hat{p}(\k,t)] \\
    &=\frac{1}{2\pi\sigma^2}\int_{\mathbb{R}^2}e^{-\frac{|\X-\Y|^2}{2\sigma^2}}p(\Y,t))d\Y-p(\X,t) \\
    &=-\frac{1}{2\pi\sigma^2}\int_{\mathbb{R}^2}e^{-\frac{|\X-\Y|^2}{2\sigma^2}}(p(\X,t)-p(\Y,t))d\Y.
  \end{split}
\end{equation*}

{\bf Case 2}: Since the particles spread in either horizontal or vertical direction, we take $\nu(d\Y)$ to be
\begin{equation*}
  \nu(d\Y)=\frac{1}{2(2\pi\sigma^2)^{\frac{1}{2}}}e^{-\frac{|y_1|^2}{2\sigma^2}}\delta(y_2)d\Y
          +\frac{1}{2(2\pi\sigma^2)^{\frac{1}{2}}}e^{-\frac{|y_2|^2}{2\sigma^2}}\delta(y_1)d\Y.
\end{equation*}
Similar to Case 1, we have
\begin{equation}\label{char_case2}
  \Phi_0(\k)-1=\frac{1}{2}e^{-\frac{1}{2}\sigma^2|k_1|^2}
            +\frac{1}{2}e^{-\frac{1}{2}\sigma^2|k_2|^2}-1,
\end{equation}
and thus derive the equation
\begin{equation}\label{macro_case2}
\begin{split}
  \frac{\partial p(\X,t)}{\partial t}=-\frac{\zeta}{2(2\pi\sigma^2)^{\frac{1}{2}}}
  \Big(&\int_{\mathbb{R}}e^{-\frac{|x_1-y_1|^2}{2\sigma^2}}(p(x_1,x_2,t)-p(y_1,x_2,t))dy_1\\
  &+\int_{\mathbb{R}}e^{-\frac{|x_2-y_2|^2}{2\sigma^2}}(p(x_1,x_2,t)-p(x_1,y_2,t))dy_2\Big).
\end{split}
\end{equation}

From \eqref{macro_case1} and \eqref{macro_case2}, it can be noted that different ways of movement of microscopic particles lead to different macroscopic equations. Furthermore, these macroscopic equations are both nonlocal, and should be endowed with the generalized boundary conditions. But the scaling limits of the Gaussian jump processes of the above two cases are both Brownian motion. In fact, let $\frac{1}{\zeta}\rightarrow0$ and $\sigma^2\rightarrow0$, while the product
\begin{equation*}
  \lim_{1/\zeta\rightarrow0,\sigma^2\rightarrow0} \frac{1}{2}\zeta\sigma^2=K_1
\end{equation*}
is kept finite, where $K_1$ is the diffusion coefficient with unit $[\textrm{cm}^2]/[\textrm{s}]$ \cite{Barkai:00}. Then, both \eqref{char_case1} and \eqref{char_case2} become, up to a multiplier,
\begin{equation*}
  \Phi_0(\k)-1=-\frac{1}{2}\sigma^2|\k|^2,
\end{equation*}
resulting in the classical heat equation
\begin{equation}\label{ClassicalEq}
  \frac{\partial p(\X,t)}{\partial t}=K_1\Delta p(\X,t),
\end{equation}
where $\Delta$ is the usual Laplacian in $\mathbb{R}^2$.
\begin{figure}[ht]
\begin{minipage}{0.4\linewidth}
  \centerline{\includegraphics[scale=0.4]{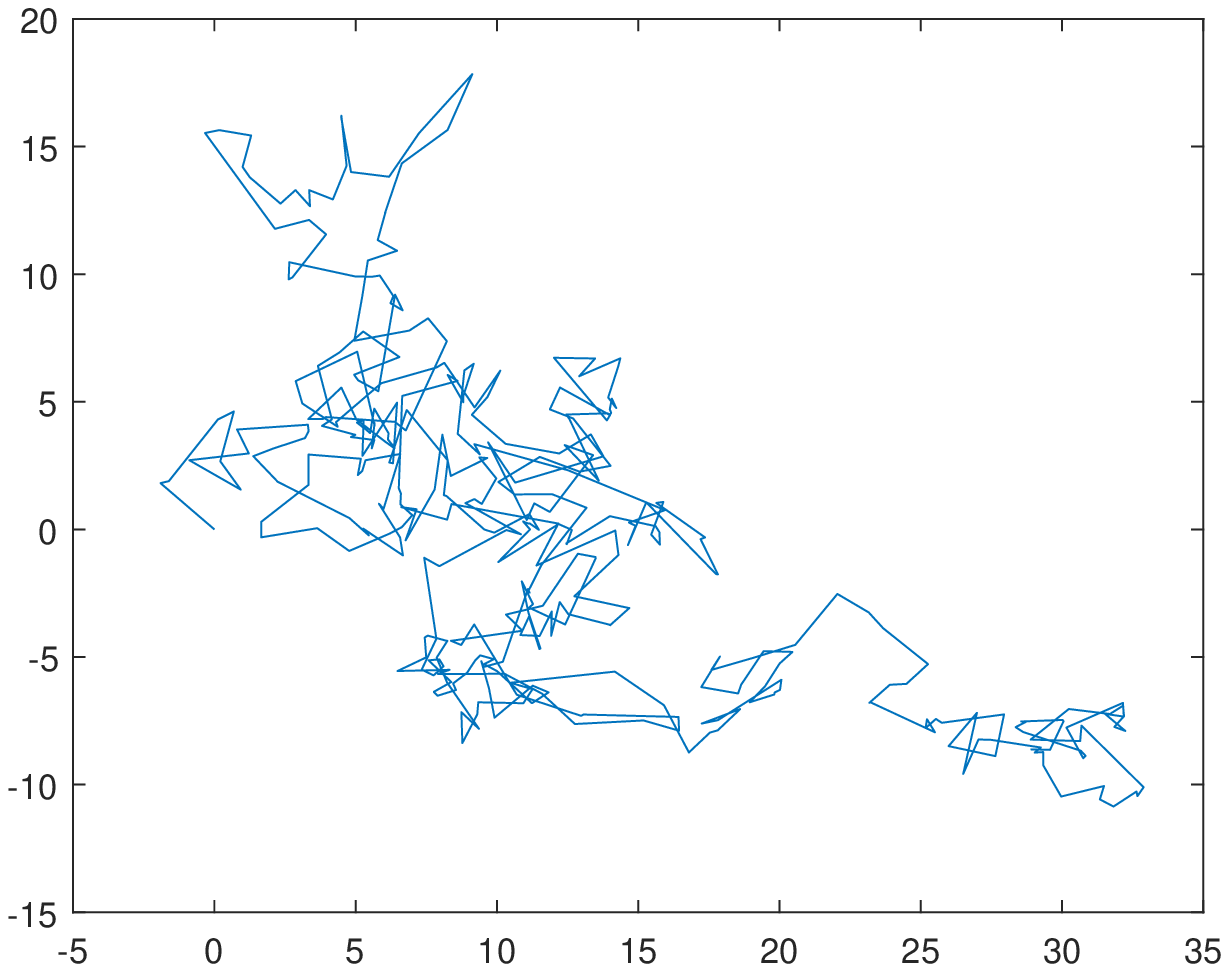}}
  \centerline{(a): Case 1 (400 steps)}
\end{minipage}
\hfill
\begin{minipage}{0.4\linewidth}
  \centerline{\includegraphics[scale=0.4]{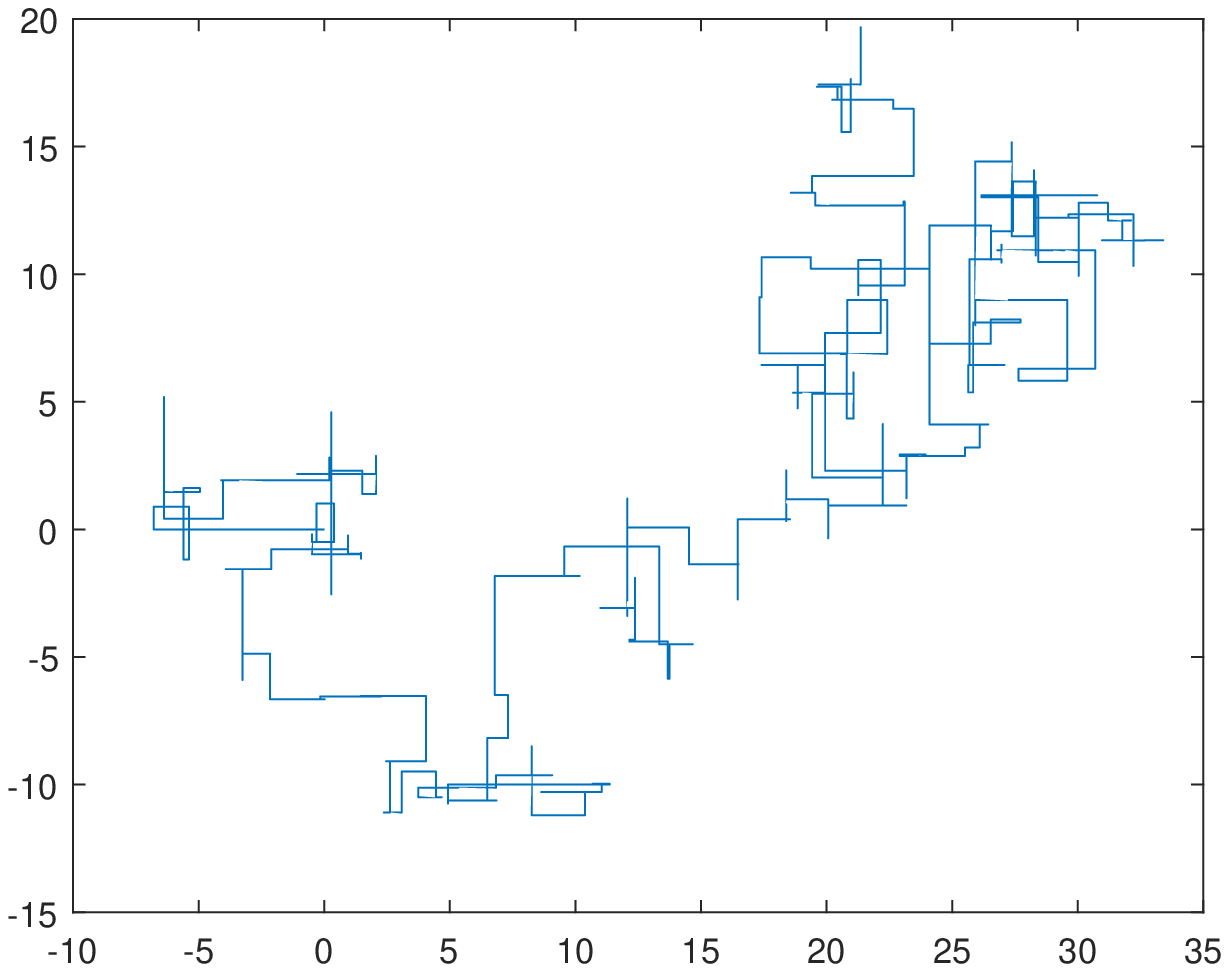}}
  \centerline{(b): Case 2 (400 steps)}
\end{minipage}
\vfill
\begin{minipage}{0.4\linewidth}
  \centerline{\includegraphics[scale=0.4]{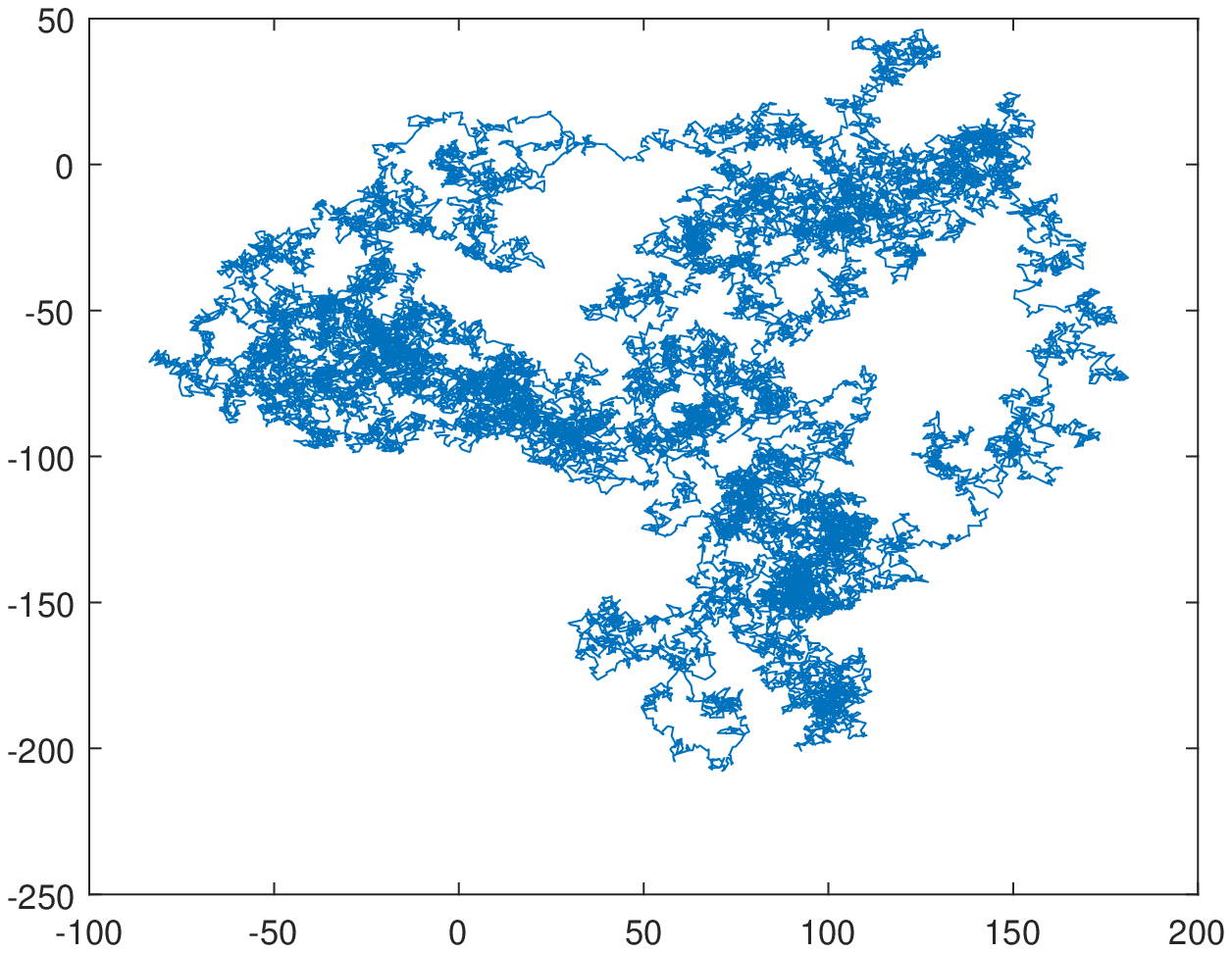}}
  \centerline{(c): Case 1 (40000 steps)}
\end{minipage}
\hfill
\begin{minipage}{0.4\linewidth}
  \centerline{\includegraphics[scale=0.4]{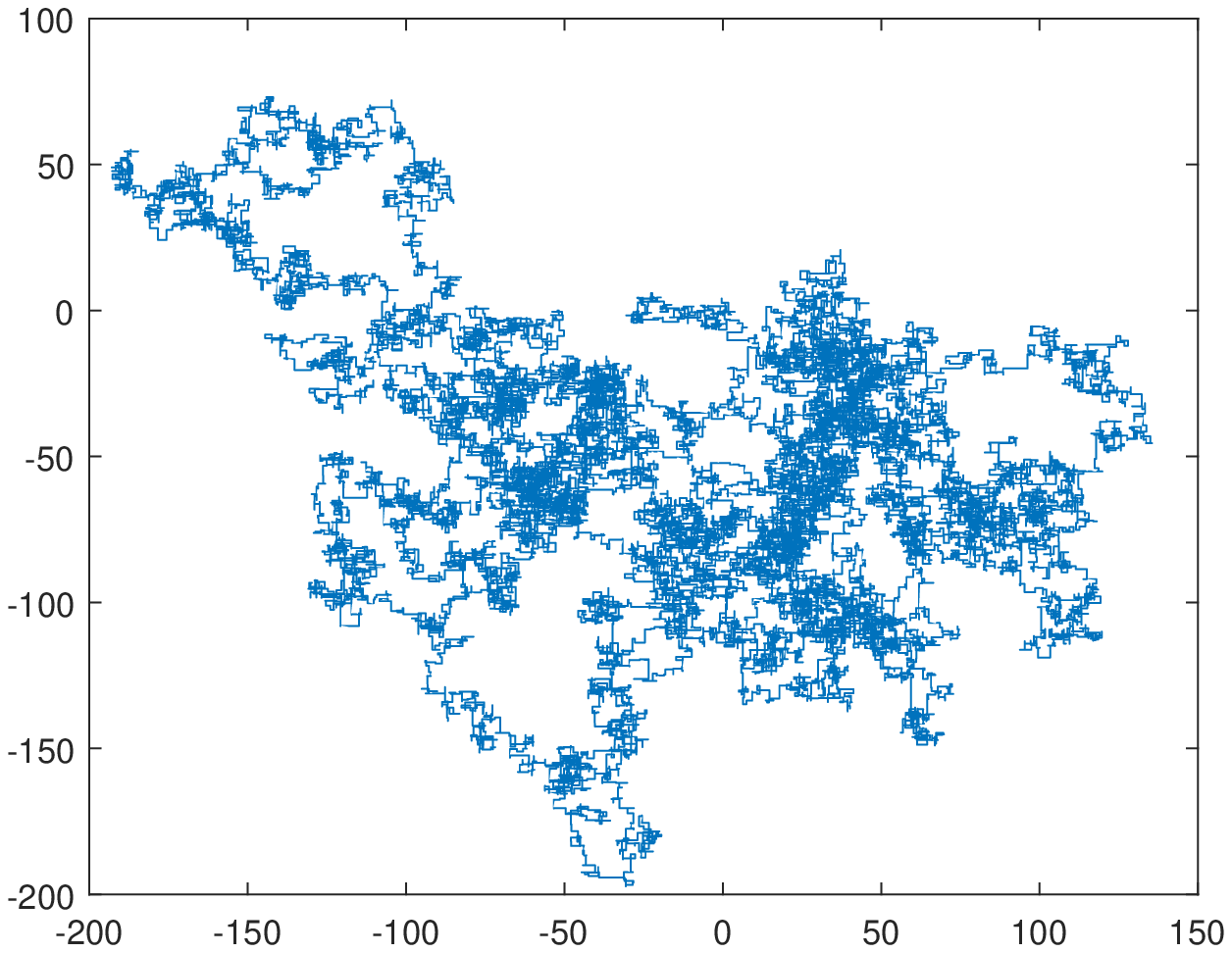}}
  \centerline{(d): Case 2 (40000 steps)}
\end{minipage}
  \caption{Random trajectories of Gaussian jumps in Case 1 and Case 2 with 400 steps in the top row and 40000 steps in the bottom row.}\label{fig0}
\end{figure}
To illustrate the relationship between Case 1 and Case 2, we simulate the trajectories of the particles undergoing Gaussian jumps. Two pictures in the top row are for the 400 jumps performed uniformly   (a) and just in horizontal-vertical direction (b), while another two pictures in the bottom row display 40000 jumps, respectively. The differences between Case 1 and Case 2 are apparent for a relatively small number of jumps. But after many thousands of jumps, they gradually disappear, as both processes are converging to the same Brownian motion.

Besides the two cases above, more generally, the particles can move in a variety of different ways, depending on the environment. There may be more particles spreading in one direction or some particles spreading faster in another direction. This phenomenon is named as anisotropic diffusion, and can be expressed clearly by the L\'{e}vy measure $\nu(d\Y)$. More precisely, still in two dimensional space, by polar coordinates transformation, take $\nu(d\Y)$ to be
\begin{equation}\label{def_m}
  \nu(d\Y)=c_m\exp\left[-\frac{r^2}{2\sigma^2_\theta}\right] m(\theta) r dr d\theta,
\end{equation}
where $c_m>0$ is the normalized parameter, $r\geq0$, $\theta\in[0,2\pi)$ denotes the different directions, $m(\theta)$ denotes the probability distribution of particles spreading in $\theta$-direction, satisfying $m(\theta)\geq0$, $\int_{0}^{2\pi}m(\theta)d\theta=1$, and $\sigma_\theta$ denotes the possibly different variance or speed of particles spreading in $\theta$-direction. Different from \eqref{rot_sym}, this $\nu(d\Y)$ contains a new probability density function $m(\theta)$ which only depends on direction. Turning back to the Cartesian coordinate system and following \eqref{LK}, we have
\begin{equation*}
  \Phi_0(\k)-1= c_m \int_{\mathbb{R}^2}
  (e^{i\k\cdot\Y}-1)\exp\left[-\frac{|\Y|^2}{2\sigma^2_{\Y}}\right] m(\Y)d\Y,
\end{equation*}
where the probability density function $m(\theta)$ is abused by $m(\Y)$ and $\Y\in \mathbb{R}^n\backslash\{0\}$ is in the Cartesian coordinate system, while it really means $m\left(\frac{\Y}{|\Y|}\right)$, only depending on the radial direction of $\Y$. The notation $m(\Y)$ will be used in the subsequent sections.
Then similarly to \eqref{macro_case1} and \eqref{macro_case2}, we can derive the equation
\begin{equation}\label{aniso_Gaussian}
  \frac{\partial p(\X,t)}{\partial t}=-\zeta c_m \int_{\mathbb{R}^2} (p(\X,t)-p(\Y,t))
  \exp\left[-\frac{|\X-\Y|^2}{2\sigma^2_{\X-\Y}}\right]m(\X-\Y)d\Y.
\end{equation}
If we take $\sigma_{\theta}=\sigma$, $m(\theta)=(2\pi)^{-1}$ being a constant or
$m(\theta)=\frac{1}{4}(\delta(\theta)+\delta(\theta-\frac{\pi}{2})+\delta(\theta-{\pi})+\delta(\theta-\frac{3\pi}{2}))$ in \eqref{def_m},
then Eq. \eqref{aniso_Gaussian} reduces to \eqref{macro_case1} and \eqref{macro_case2}, respectively.

All the above discussions, including Case 1 and Case 2, and even the case of \eqref{def_m}, are for pure jump processes (without the scaling limit). The models are different and their associated macroscopic equations should be endowed with generalized boundary conditions. But after the scaling limit, Case 1 and Case 2 are equivalent, and only local boundary conditions for their macroscopic equations \eqref{ClassicalEq} are needed.

\section{Anisotropic anomalous diffusion}\label{Sec3}

Here, we discuss the anomalous diffusion with the property of anisotropy. Still based on the compound Poisson processes in the previous section, but with the diffusion processes being anisotropic (tempered) $\beta-$stable, we try to derive their corresponding deterministic equations undergoing anomalous diffusion. Taking $\zeta=1$ in \eqref{kspace} leads to
\begin{equation}\label{Levyf_kspace}
  \frac{\partial\hat{p}(\k,t)}{\partial t}=(\Phi_0(\k)-1)\hat{p}(\k,t),
\end{equation}
where
\begin{equation}\label{Levyf_char}
  \Phi_0(\k)-1=\int_{\mathbb{R}^n\backslash\{0\}} \Big[e^{i\k\cdot\Y}-1-i\k\cdot\Y_{\chi_{\{|\Y|<1\}}}\Big]\nu(d\Y).
\end{equation}
Here, different from \eqref{LK}, we add a term $i\k\cdot\Y$ to overcome the possible divergence of the integral of \eqref{Levyf_char} because of the possible strong singularity of $\nu(d\Y)$ at zero for the case of anomalous diffusion.
For an isotropic $\beta$-stable anomalous diffusion process in $n$ dimensional space, its distribution of jump length is $c_\beta r^{-n-\beta}$, which means that
\begin{equation}\label{nu_iso}
  \nu(d\Y)=c_\beta |\Y|^{-n-\beta} d\Y.
\end{equation}
When $0<\beta<1$, the term $i\k\cdot\Y$ can be omitted due to weak singularity (the integral in \eqref{Levyf_char} is convergent at origin). If $1\leq\beta<2$, though the singularity is strong, this term can also  be omitted due to the possible symmetry of the L\'{e}vy measure $\nu(d\Y)$, i.e., $\nu(d\Y)=\nu(-d\Y)$ (the integral in \eqref{Levyf_char} at origin can be understood in the sense of Cauchy principal value). Therefore, if $1\leq\beta<2$ meets with the asymmetry of $\nu(d\Y)$, this term is required.
Based on the analyses above, we will keep the term $i\k\cdot\Y$ formally for $0<\beta<2$ in the following, though it vanishes in some appropriate situations.

Two special cases have been considered in \cite{Deng:17}, i.e., the isotropic one \eqref{nu_iso} and the horizontal-vertical one
\begin{equation}\label{nu_HV}
  \begin{split}
    \nu(d\Y)
=&\,
c_{\beta_1}|y_1|^{-1-\beta_1} \delta(y_2) \delta(y_3) \cdots \delta(y_n) d\Y  \\
&+
c_{\beta_2}|y_2|^{-1-\beta_2} \delta(y_1) \delta(y_3) \cdots \delta(y_n) d\Y  + \cdots\\
&+
c_{\beta_n}|y_n|^{-1-\beta_n} \delta(y_1) \delta(y_2) \cdots \delta(y_{n-1}) d\Y,
  \end{split}
\end{equation}
where $\beta_i\in(0,2)$ and $y_i$ is the component of $\Y$, i.e., $\Y=[y_1,y_2,\cdots,y_n]^T$.
Their corresponding macroscopic equations are
\begin{equation}\label{FracLap1}
\begin{split}
  \frac{\partial p(\X,t)}{\partial t}
  &=\Delta^{\beta/2}p(\X,t) \\
  &=-c_{n,\beta}\, \mathrm{P.V.} \int_{\mathbb{R}^n} \frac{p(\X,t)-p(\Y,t)}{|\X-\Y|^{n+\beta}}d\Y \\
\end{split}
\end{equation}
and
\begin{equation}\label{FracHV1}
  \frac{\partial p(\X,t)}{\partial t} =
  (\Delta_{x_1}^{\beta_1/2}+\Delta_{x_2}^{\beta_2/2}+\cdots+\Delta_{x_n}^{\beta_n/2} )p(\X,t),
\end{equation}
where $\Delta_{x_i}^{\beta_i/2}$ is the fractional Laplacian in $\mathbb{R}^1$ w.r.t. $x_i$.
Besides the two cases, there are also a large number of irregular motions the microscopic particles perform. In general, we call it anisotropy. With the aid of L\'{e}vy-Khintchine formula \eqref{LKformula}, we will give the concrete form of $\nu(d\Y)$ in two and three dimensions.

Following \eqref{Levyf_kspace} and \eqref{Levyf_char}, with inverse Fourier transform, we have
\begin{equation}\label{aniso1}
  \frac{\partial p(\X,t)}{\partial t} =
  \int_{\mathbb{R}^n\backslash\{0\}} [p(\X-\Y)-p(\X)+(\Y\cdot\nabla_\X p(\X))_{\chi_{[|\Y|<1]}}] \nu(d\Y),
\end{equation}
where $\nabla_\X=[\partial_{x_1},\partial_{x_2},\cdots,\partial_{x_n}]^T$.
Taking
\begin{equation}\label{aniso2}
  \nu(d\Y)= \frac{1}{|\Gamma(-\beta)|}\, \frac{m(\Y)}{|\Y|^{n+\beta}} d\Y,
\end{equation}
then \eqref{aniso1} becomes
\begin{equation}\label{anisoLap}
\begin{split}
  \frac{\partial p(\X,t)}{\partial t} = & \frac{1}{|\Gamma(-\beta)|}\,
  \int_{\mathbb{R}^n\backslash\{0\}} \left[p(\X-\Y)-p(\X)+(\Y\cdot\nabla_\X p(\X))_{\chi_{[|\Y|<1]}}\right]
  \\
  & \cdot \frac{m(\Y)}{|\Y|^{n+\beta}} d\Y.
\end{split}
\end{equation}
We can make the meaning of $m(\Y)$ clear by transforming this equation into polar coordinate system. In the two and three dimensional cases, \eqref{anisoLap} becomes, respectively,
{\small
\begin{equation*}
\begin{split}
    \frac{\partial p(\X,t)}{\partial t} =  \frac{1}{|\Gamma(-\beta)|}\,
    \int_0^\infty \int_0^{2\pi} \Bigg[& p(x_1-r\cos(\theta),x_2-r\sin(\theta))-p(x_1,x_2) \\
                  +& \left(r\cos(\theta)\frac{\partial p}{\partial x_1}+r\sin(\theta)\frac{\partial p}{\partial x_2}\right)_{\chi_{[r<1]}}\Bigg]
                  \frac{m(\theta)}{r^{1+\beta}} d\theta dr
\end{split}
\end{equation*}}
and
{\small
\begin{equation*}
\begin{split}
    &\frac{\partial p(\X,t)}{\partial t}
    \\
    &=\frac{1}{|\Gamma(-\beta)|}\,
    \int_0^\infty \int_0^\pi \int_0^{2\pi} \Bigg[ p(x_1-r\sin(\theta)\cos(\phi),x_2-r\sin(\theta)\sin(\phi),x_3-r\cos(\theta))   \\
    & ~~~~ -p(x_1,x_2,x_3)+ \Big(r\sin(\theta)\cos(\phi)\frac{\partial p}{\partial x_1}
             +r\sin(\theta)\sin(\phi)\frac{\partial p}{\partial x_2}
                            +r\cos(\theta)\frac{\partial p}{\partial x_3}  \Big)_{\chi_{[r<1]}}\Bigg]\\
    & ~~~~
                  \frac{m(\theta,\phi)\sin\theta}{r^{1+\beta}} d\phi d\theta dr,
\end{split}
\end{equation*}}
where the probability density function $m(\theta)$ or $m(\theta,\phi)$ specifies the distribution of particles spreading in the radial direction of $\Y$; among them,
$m(\theta)$ is defined on $[0,2\pi]$, satisfying $\int_{0}^{2\pi}m(\theta)d\theta=1$,
while $m(\theta,\phi)$ is defined on a $[0,\pi]\times[0,2\pi]$ rectangular domain, satisfying $\int_0^\pi \int_{0}^{2\pi}m(\theta,\phi)d\phi d\theta=1$.

For the tempered L\'{e}vy flight, we can describe the movement of microscopic particles and derive the macroscopic equations by defining
\begin{equation}
  \nu(d\Y)= \frac{1}{|\Gamma(-\beta)|}\, \frac{m(\Y)}{e^{\lambda|\Y|}|\Y|^{n+\beta}} d\Y;
\end{equation}
and \eqref{aniso1} becomes
\begin{equation}\label{anisoLapTemp}
\begin{split}
  \frac{\partial p(\X,t)}{\partial t} = & \frac{1}{|\Gamma(-\beta)|}\,
  \int_{\mathbb{R}^n\backslash\{0\}} \left[p(\X-\Y)-p(\X)+(\Y\cdot\nabla_\X p(\X))_{\chi_{[|\Y|<1]}} \right]
  \\
  &\cdot \frac{m(\Y)}{e^{\lambda|\Y|}|\Y|^{n+\beta}} d\Y.
 \end{split}
\end{equation}
We write Eqs. \eqref{anisoLap} and \eqref{anisoLapTemp}, respectively, as
\begin{equation}
  \frac{\partial p(\X,t)}{\partial t} = \Delta_m^{\beta/2} p(\X,t)
\end{equation}
and
\begin{equation}
  \frac{\partial p(\X,t)}{\partial t} = \Delta_m^{\beta/2,\lambda} p(\X,t)
\end{equation}
where the notation $\Delta_m^{\beta/2}$ ($\Delta_m^{\beta/2,\lambda}$) denotes the anisotropic (tempered) fractional Laplacian in $\mathbb{R}^n$; and their definitions are the right hand sides of \eqref{anisoLap} and \eqref{anisoLapTemp}.

\begin{figure}[ht]
\begin{minipage}{0.31\linewidth}
  \centerline{\includegraphics[scale=0.31]{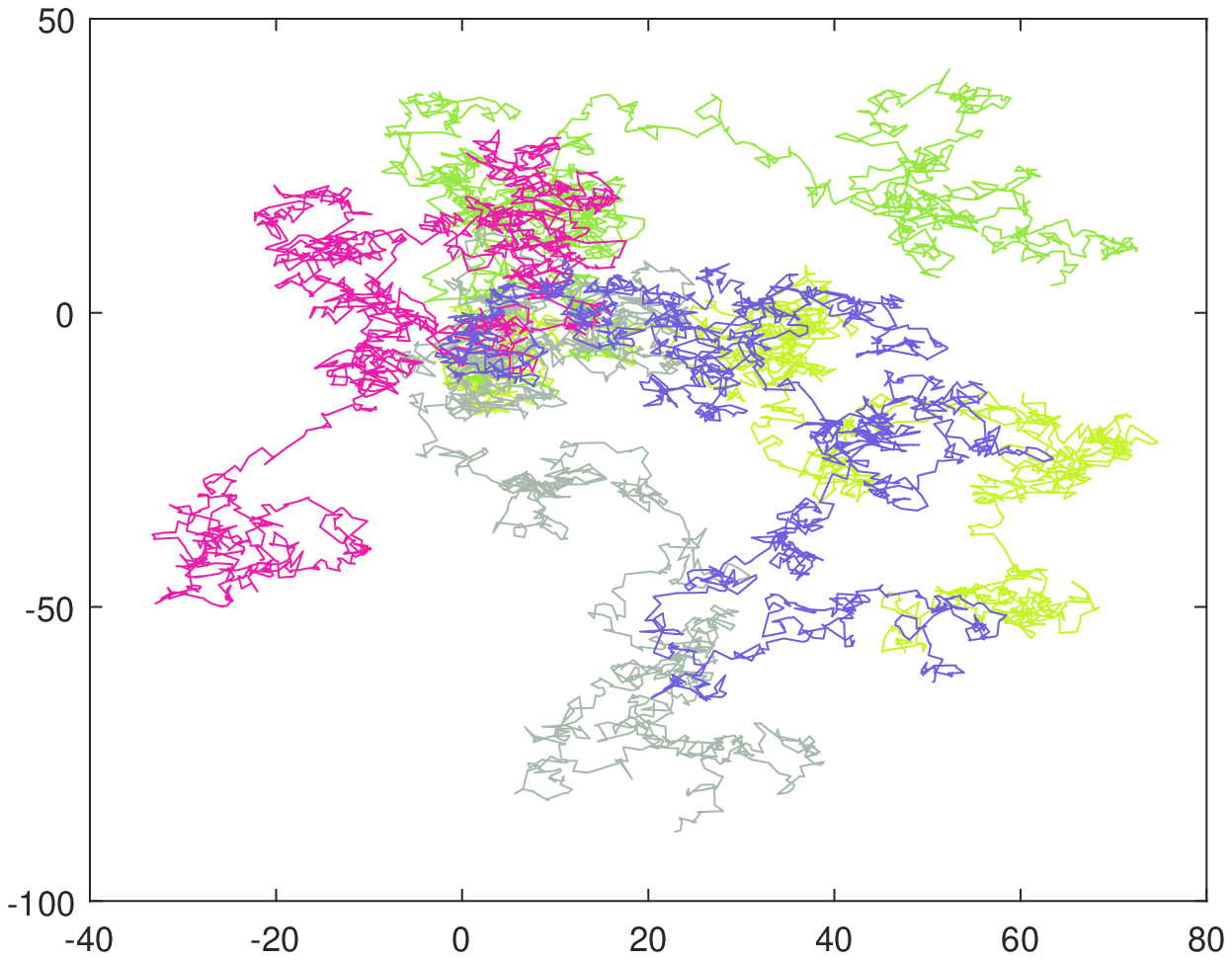}}
  \centerline{(a) $\beta=2$}
\end{minipage}
\hfill
\begin{minipage}{0.31\linewidth}
  \centerline{\includegraphics[scale=0.31]{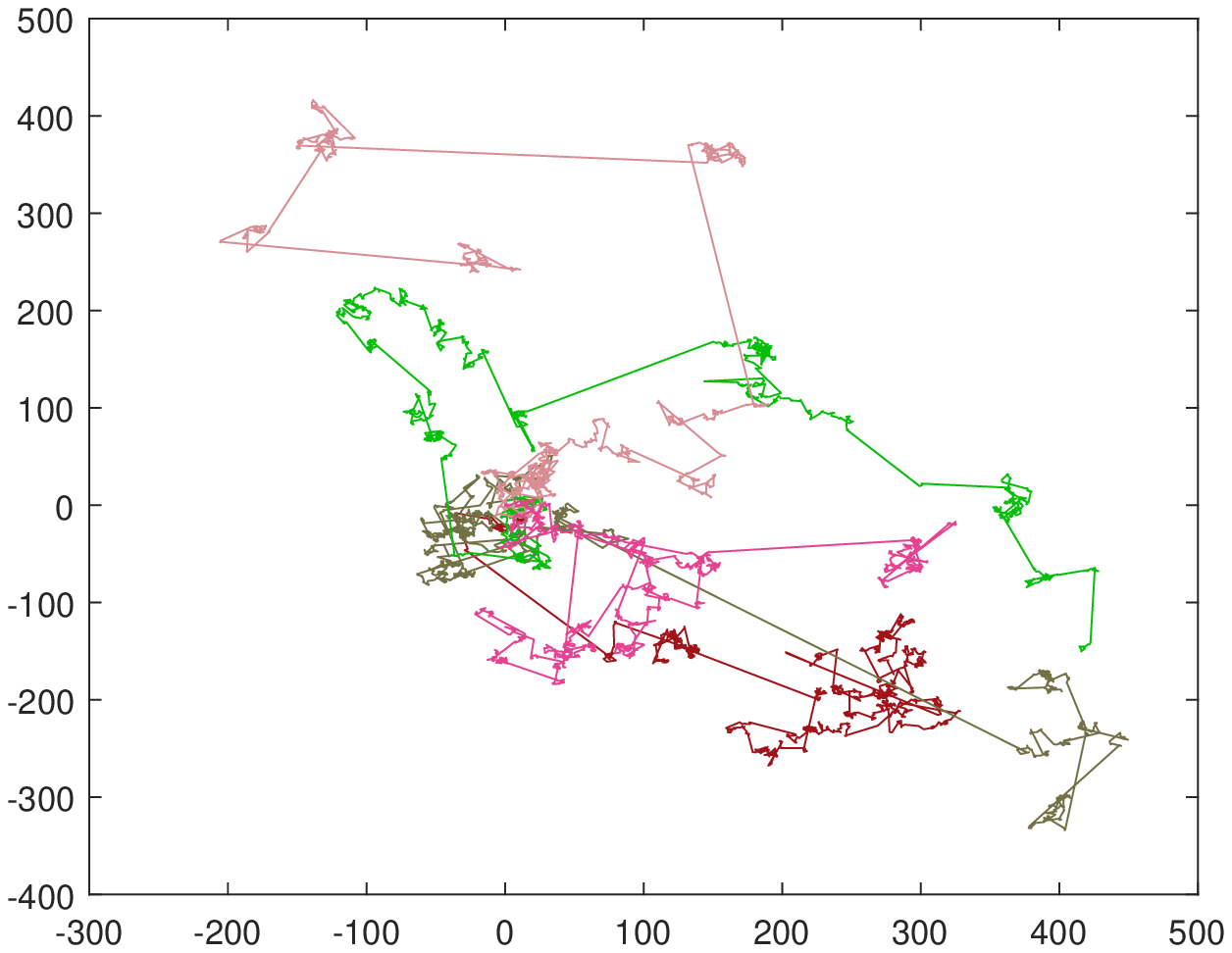}}
  \centerline{(b) $\beta=1.3$}
\end{minipage}
\hfill
\begin{minipage}{0.31\linewidth}
  \centerline{\includegraphics[scale=0.31]{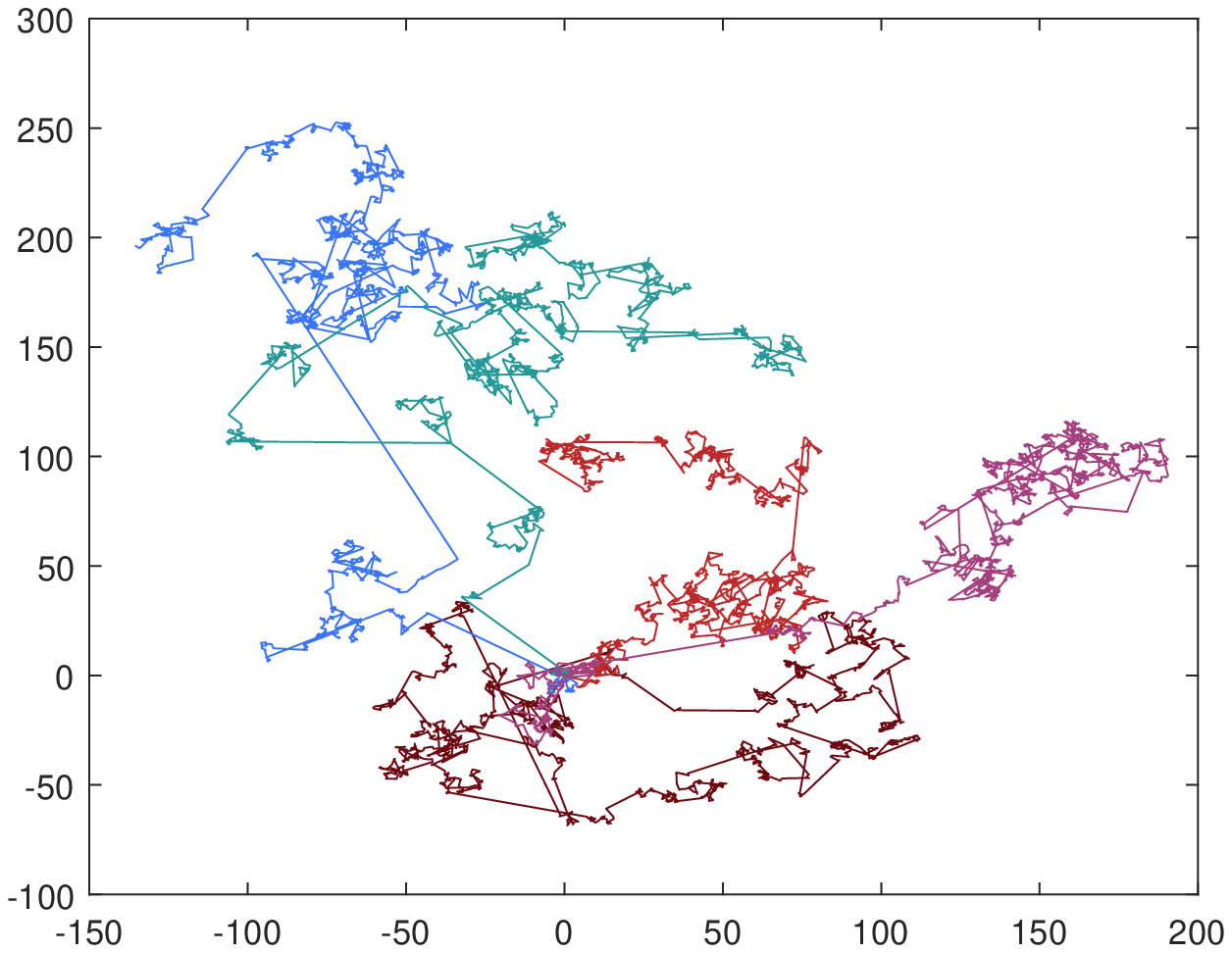}}
  \centerline{(c) $\beta=1.3,\lambda=0.01$}
\end{minipage}
\vfill
\begin{minipage}{0.31\linewidth}
  \centerline{\includegraphics[scale=0.31]{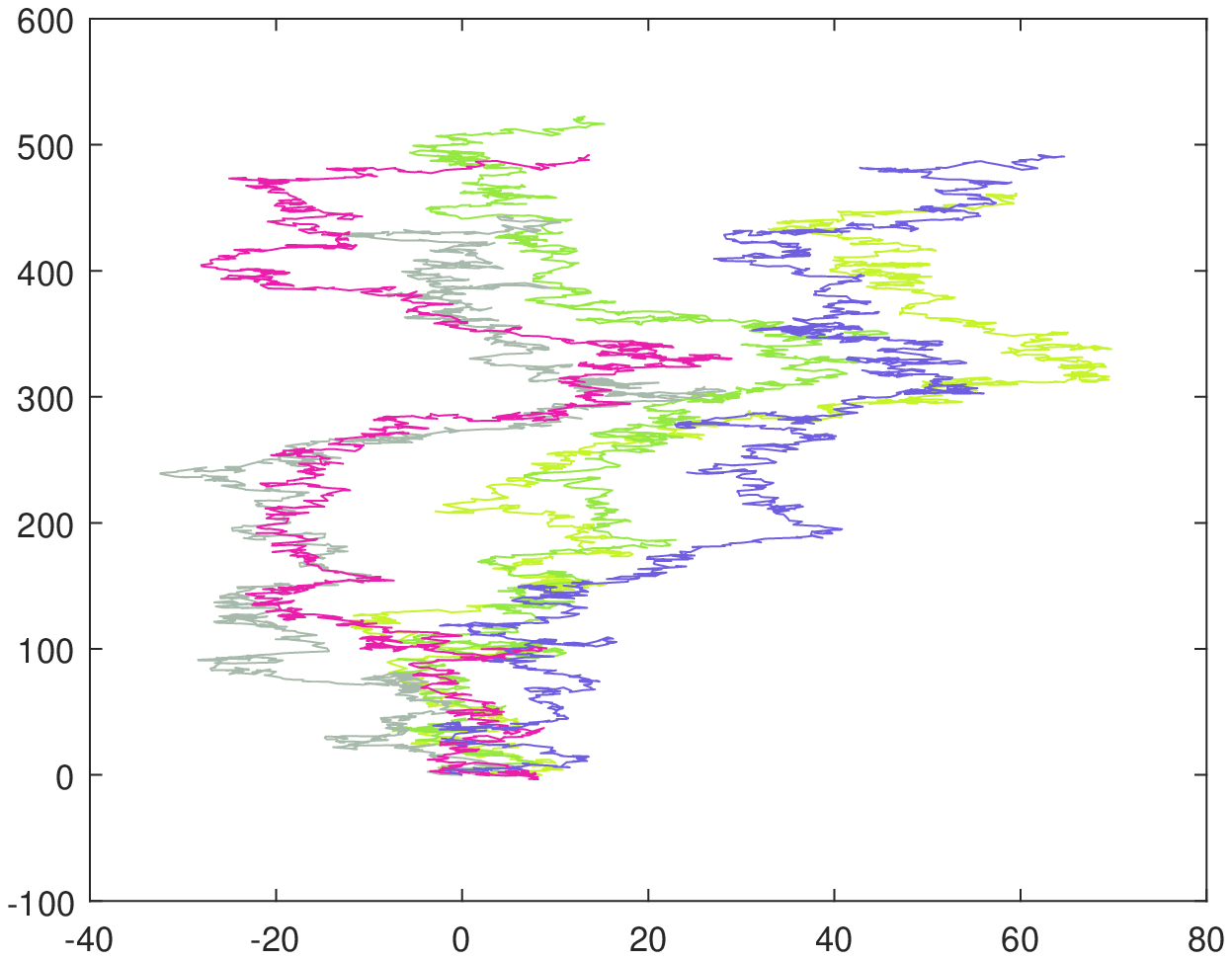}}
  \centerline{(d) $\beta=2$}
\end{minipage}
\hfill
\begin{minipage}{0.31\linewidth}
  \centerline{\includegraphics[scale=0.31]{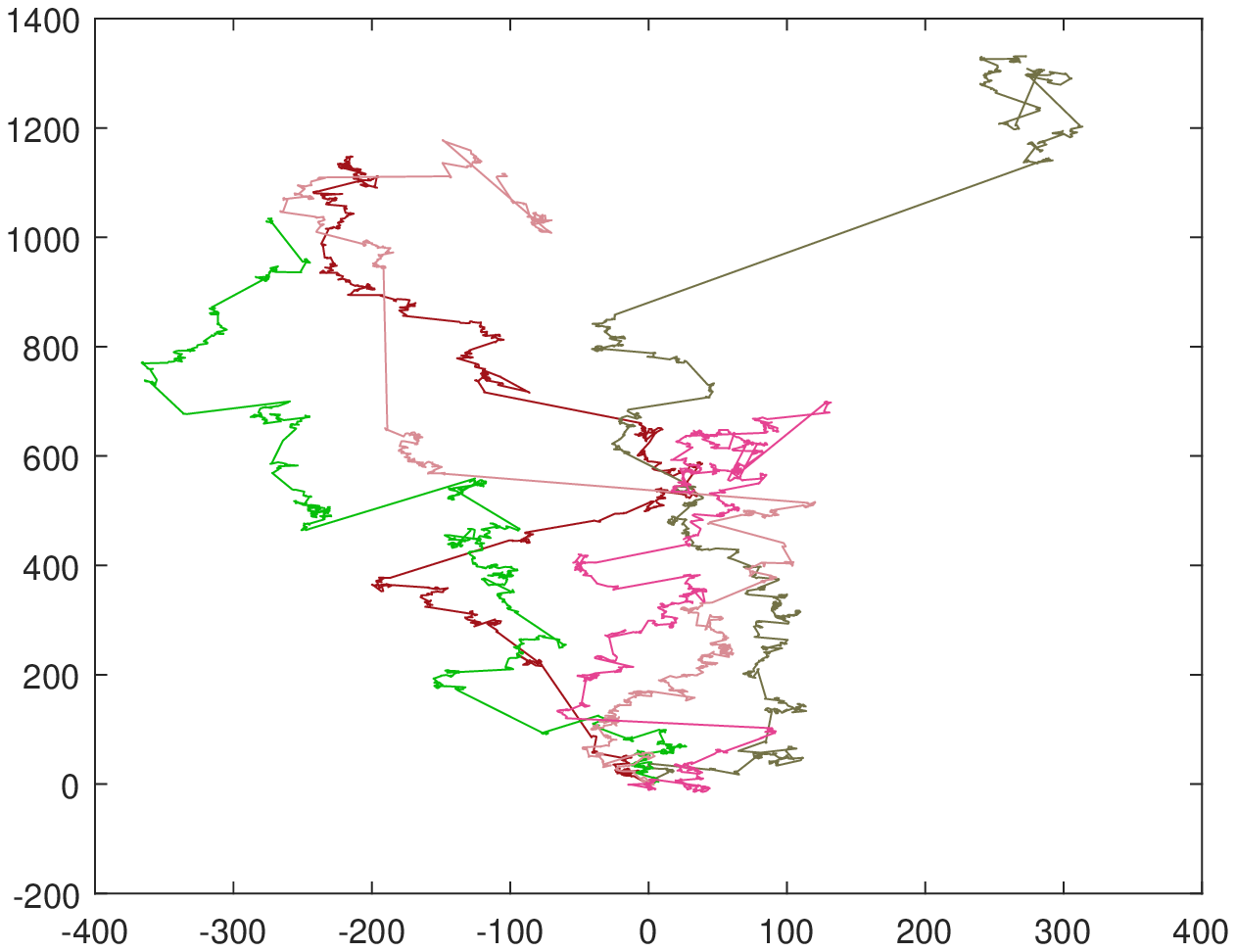}}
  \centerline{(e) $\beta=1.3$}
\end{minipage}
\hfill
\begin{minipage}{0.31\linewidth}
  \centerline{\includegraphics[scale=0.31]{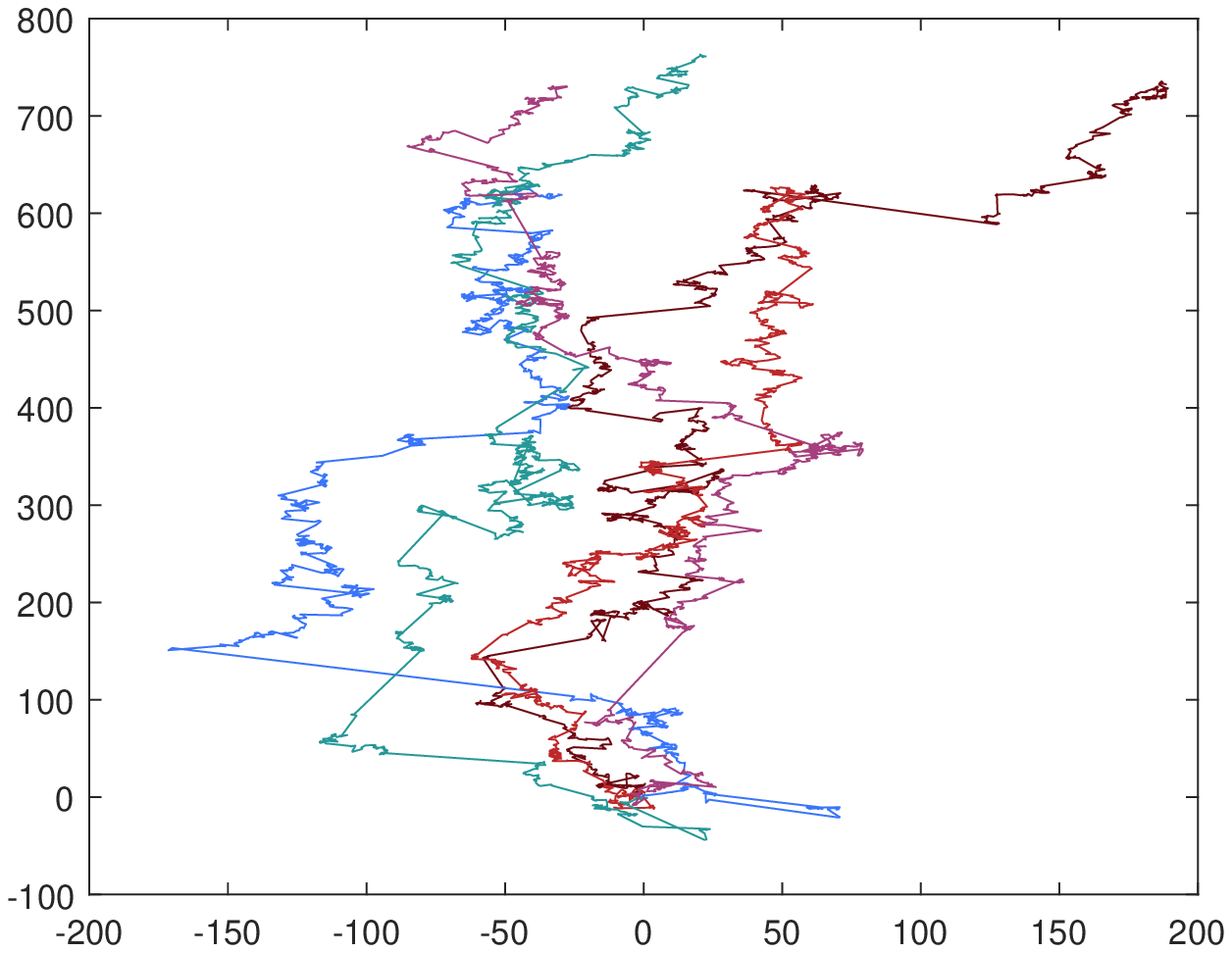}}
  \centerline{(f) $\beta=1.3,\lambda=0.01$}
\end{minipage}
  \caption{Five random trajectories (2000 steps) in each graph with L\'{e}vy laws $\beta=2$ (Gaussian), $\beta=1.3$ (L\'{e}vy flight),  and $\beta=1.3$ and $\lambda=0.01$ (tempered L\'{e}vy flight). For the comparison between the top row and the bottom row, representing isotropic and anisotropic case respectively, the same underlying random number seeds have been used, which is shown by the same color in the same column. }\label{fig1}
\end{figure}

We simulate the trajectories of the particles with the anisotropic movements. Figure \ref{fig1} shows five random trajectories of 2000 steps of L\'{e}vy flight with $\beta=2$ (Gaussian), $\beta=1.3$ and tempered L\'{e}vy flight with $\beta=1.3$ and $\lambda=0.01$ in two dimensions. All trajectories start from the origin $(0,0)$. Three pictures on the top row correspond to the isotropic case, i.e., $m(\Y)=1/(2\pi)$ for $\arg(\Y)\in[0,2\pi)$, while another three on bottom row correspond to the anisotropic case, where we choose $m(\Y)=2/(3\pi)$ for $\arg(\Y)\in(0,\pi)$ and $m(\Y)=1/(3\pi)$ for $\arg(\Y)\in(\pi,2\pi)$.
Note that (a) and (d) depict the isotropic and anisotropic Gaussian jump processes introduced in Section \ref{Sec2}.
By horizontal comparison, the lengths of Gaussian jumps in (a) have almost the same sizes, while L\'{e}vy flight in (b) preforms rare but large jumps. And an exponential truncation in (c) with even little $\lambda=0.01$ excludes large jumps compared with (b). By vertical comparison, in the bottom row, particles are more inclined to move upward and thus finally farther than the isotropic case with the same steps.

Different from \eqref{anisoLap} and \eqref{anisoLapTemp}, an alternative definition of the anisotropic (tempered) fractional Laplacians is given by Fourier transform \cite[Eq. 2]{Meerschaert:99}, with an analogous tempered one presented here:
\begin{equation}\label{anisoLapFT}
  \mathscr{F}[\Delta_m^{\beta/2} p(\X,t)]= (-1)^{\lceil\beta\rceil} \Bigg[ \int_{|\fai|=1}(-i\k\cdot\fai)^\beta m(\fai) d\fai \Bigg]\hat{p}(\k,t)
\end{equation}
and
\begin{equation}\label{anisoLapTempFT}
  \mathscr{F}[\Delta_m^{\beta/2,\lambda} p(\X,t)]= (-1)^{\lceil\beta\rceil} \Bigg[ \int_{|\fai|=1} \left((\lambda-i\k\cdot\fai)^\beta-\lambda^\beta \right) m(\fai) d\fai \Bigg]\hat{p}(\k,t).
\end{equation}
It seems that these definitions are natural for the study of the governing equations,
since the symbol $(-i\k\cdot\fai)^\beta$ for $\beta\in(0,1)\cup(1,2)$ denotes $\beta$-order fractional directional derivative. Now we consider the question of when the two ways of defining the operators are equivalent.
To establish the relationship between them, we focus on two cases:
\begin{itemize}
  \item {\bf Case I}: $0<\beta<1$ or $m$ is symmetric. Recall that here the third term in \eqref{anisoLap} and \eqref{anisoLapTemp} can be deleted,
    \begin{equation}\label{anisoLapCase1}
      \Delta_m^{\beta/2} p(\X,t)=
      \frac{1}{|\Gamma(-\beta)|}\int_{\mathbb{R}^n\backslash\{0\}} \left[p(\X-\Y)-p(\X) \right] \frac{m(\Y)}{|\Y|^{n+\beta}} d\Y,
    \end{equation}
    \begin{equation}\label{anisoLapTempCase1}
      \Delta_m^{\beta/2,\lambda} p(\X,t)=
      \frac{1}{|\Gamma(-\beta)|}\int_{\mathbb{R}^n\backslash\{0\}} \left[p(\X-\Y)-p(\X) \right] \frac{m(\Y)}{e^{\lambda|\Y|}|\Y|^{n+\beta}} d\Y.
    \end{equation}
  \item {\bf Case II}: $1<\beta<2$ and $m$ is asymmetric. Recall that here the integrals in \eqref{anisoLap} and \eqref{anisoLapTemp} without the third terms can be understood in the Hadamard sense \cite[(5.65)]{Samko:93}, i.e.,
    \begin{equation}\label{anisoLapCase2}
    \begin{split}
      \Delta_m^{\beta/2} p(\X,t)
      &= \textrm{p.f.}~ \frac{1}{|\Gamma(-\beta)|}\int_{\mathbb{R}^n\backslash\{0\}} \left[p(\X-\Y)-p(\X) \right]
       \frac{m(\Y)}{|\Y|^{n+\beta}} d\Y  \\
      &= \frac{1}{|\Gamma(-\beta)|}\int_{\mathbb{R}^n\backslash\{0\}} \left[p(\X-\Y)-p(\X)+(\Y\cdot\nabla_\X p(\X)) \right]
      \\
      & ~~~~ \cdot \frac{m(\Y)}{|\Y|^{n+\beta}} d\Y ,
    \end{split}
    \end{equation}
    \begin{equation}\label{anisoLapTempCase2}
    \begin{split}
      \Delta_m^{\beta/2,\lambda} p(\X,t)
      &= \textrm{p.f.}~ \frac{1}{|\Gamma(-\beta)|}\int_{\mathbb{R}^n\backslash\{0\}} \left[p(\X-\Y)-p(\X) \right]
       \frac{m(\Y)}{e^{\lambda|\Y|}|\Y|^{n+\beta}} d\Y  \\
      &= \frac{1}{|\Gamma(-\beta)|}\int_{\mathbb{R}^n\backslash\{0\}} \left[p(\X-\Y)-p(\X)+(\Y\cdot\nabla_\X p(\X)) \right]  \\
       & ~~~~ \cdot \frac{m(\Y)}{e^{\lambda|\Y|}|\Y|^{n+\beta}} d\Y-\frac{1}{|\Gamma(-\beta)|}\Gamma(1-\beta)\lambda^{\beta-1}(\mathbf{b}\cdot\nabla_{\X} p(\X)),
    \end{split}
    \end{equation}
    where $\mathbf{b}=\int_{|\fai|=1}\fai\, m(\fai)d\fai$.
\end{itemize}
In Case II, since the high singularity makes the integral divergent, we use the notation $\textrm{p.f.}$ to denote its finite part in the Hadamard sense.

Then we have the following theorem; see Appendix for the proof, which further implies the equality \eqref{anisoLapTempCase2}.
\begin{theorem}\label{theo1}
 Let $m(\Y)$ be any probability density function on the unit sphere and $\lambda\geq0$. The definitions of the anisotropic (tempered) fractional Laplacians $\Delta_m^{\beta/2,\lambda}$ in both Case I and Case II are, respectively, equivalent to $\Delta_m^{\beta/2,\lambda}$ in \eqref{anisoLapFT} and \eqref{anisoLapTempFT} in $\mathbb{R}^n$.
\end{theorem}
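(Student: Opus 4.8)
The plan is to reduce the whole statement to an identity between Fourier multipliers, and then to a single one--dimensional radial integral. Since the right--hand definitions \eqref{anisoLapFT} and \eqref{anisoLapTempFT} prescribe the operator \emph{through its symbol}, it suffices to take $p$ in the Schwartz class, apply the Fourier transform $\mathscr{F}$ (with the convention $\hat g(\k)=\int_{\mathbb{R}^n}e^{i\k\cdot\X}g(\X)\,d\X$ fixed before \eqref{Lap-tempLap}) to the hypersingular integral definitions \eqref{anisoLapCase1}--\eqref{anisoLapTempCase2}, and verify that the resulting multiplier equals the bracketed factor in \eqref{anisoLapFT}/\eqref{anisoLapTempFT}. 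Using the translation rule $\mathscr{F}[p(\cdot-\Y)](\k)=e^{i\k\cdot\Y}\hat p(\k)$, the differentiation rule $\mathscr{F}[\Y\cdot\nabla_\X p](\k)=-i(\k\cdot\Y)\hat p(\k)$, and Fubini to interchange $\mathscr{F}$ with the $\Y$--integral, the symbol of $\Delta_m^{\beta/2,\lambda}$ becomes
\begin{equation*}
  \frac{1}{|\Gamma(-\beta)|}\int_{\mathbb{R}^n\backslash\{0\}}\Big[e^{i\k\cdot\Y}-1-i(\k\cdot\Y)\chi_{\{|\Y|<1\}}\Big]\frac{m(\Y)}{e^{\lambda|\Y|}|\Y|^{n+\beta}}\,d\Y,
\end{equation*}
with the compensator present or absent according to Case II or Case I.

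Next I would pass to polar coordinates $\Y=r\fai$, $r=|\Y|$, $|\fai|=1$, so that $d\Y=r^{n-1}dr\,d\fai$, $m(\Y)=m(\fai)$, and $d\Y/|\Y|^{n+\beta}=r^{-1-\beta}dr\,d\fai$. This factors the symbol as $\int_{|\fai|=1}m(\fai)\,I_\lambda(\k\cdot\fai)\,d\fai$, where, writing $s=\k\cdot\fai$,
\begin{equation*}
  I_\lambda(s)=\int_0^\infty\Big[e^{irs}-1-irs\,\chi_{\{r<1\}}\Big]\frac{e^{-\lambda r}}{r^{1+\beta}}\,dr .
\end{equation*}
Everything then reduces to this scalar integral, which I would evaluate by the classical Gamma--function identities $\int_0^\infty(e^{-cr}-1)r^{-1-\alpha}dr=\Gamma(-\alpha)c^\alpha$ for $0<\alpha<1$ and $\int_0^\infty(e^{-cr}-1+cr)r^{-1-\alpha}dr=\Gamma(-\alpha)c^\alpha$ for $1<\alpha<2$ (valid for $\mathrm{Re}\,c\ge0$, $c\neq0$, by analytic continuation with the principal branch), applied with $c=\lambda-is$ and $c=\lambda$.

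In Case I ($0<\beta<1$, or $m$ symmetric) the compensator is dropped and one obtains $I_0(s)=\Gamma(-\beta)(-is)^\beta$ and $I_\lambda(s)=\Gamma(-\beta)\big[(\lambda-is)^\beta-\lambda^\beta\big]$; when $m$ is symmetric the otherwise divergent linear term $-irs$ drops out after the angular integration because $\int_{|\fai|=1}(\k\cdot\fai)m(\fai)\,d\fai=\k\cdot\mathbf{b}=0$, so only the stated symbols survive. In Case II ($1<\beta<2$, $m$ asymmetric) the full compensator yields $I_\lambda(s)=\Gamma(-\beta)\big[(\lambda-is)^\beta-\lambda^\beta\big]-is\,\Gamma(1-\beta)\lambda^{\beta-1}$. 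Since $\Gamma(-\beta)/|\Gamma(-\beta)|=-1=(-1)^{\lceil\beta\rceil}$ for $0<\beta<1$ and $=+1=(-1)^{\lceil\beta\rceil}$ for $1<\beta<2$, dividing by $|\Gamma(-\beta)|$ and integrating against $m(\fai)$ reproduces precisely the multipliers in \eqref{anisoLapFT} and \eqref{anisoLapTempFT}. The leftover linear piece $-\tfrac{\Gamma(1-\beta)}{|\Gamma(-\beta)|}\lambda^{\beta-1}\int_{|\fai|=1}(i\k\cdot\fai)m(\fai)\,d\fai=-\tfrac{\Gamma(1-\beta)}{|\Gamma(-\beta)|}\lambda^{\beta-1}(i\k\cdot\mathbf{b})$ is, upon inverse transform, exactly the drift $-\tfrac{\Gamma(1-\beta)}{|\Gamma(-\beta)|}\lambda^{\beta-1}(\mathbf{b}\cdot\nabla_\X p)$, which both confirms \eqref{anisoLapTempCase2} and shows that the no--drift symbol \eqref{anisoLapTempFT} corresponds to the $\chi_{\{|\Y|<1\}}$--compensated definition.

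I expect the main obstacle to be the $1<\beta<2$ asymmetric case, for two reasons. First, the interchange of $\mathscr{F}$ with the strongly singular $\Y$--integral must be justified carefully: the integrand is only finite--part (Hadamard) convergent at the origin, so Fubini has to be applied to the compensated, absolutely convergent form of \eqref{anisoLapCase2}/\eqref{anisoLapTempCase2} rather than to the raw kernel. Second, one must fix the correct branch and analytic continuation of $c\mapsto c^\beta$ so that $(-is)^\beta$ and $(\lambda-is)^\beta$ are unambiguous, and track the sign of the linear compensator term that produces the $\mathbf{b}$--drift. By contrast, Case I and all the $0<\beta<1$ computations are routine once the radial integral $I_\lambda(s)$ is in hand.
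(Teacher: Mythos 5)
Your argument is correct in substance and, structurally, it is the paper's proof: both reduce the theorem to an identity between Fourier symbols, pass to polar coordinates $\Y=r\fai$, and evaluate one radial integral. The only genuine difference is how that radial integral is computed. The paper splits $e^{i\k\cdot\Y}$ into real and imaginary parts, integrates by parts twice in $r$ to lower the singularity from $r^{-1-\beta}$ to $r^{1-\beta}$, evaluates the resulting Fourier cosine/sine integrals by the tabulated formulas \cite[Eq. 3.944(5-6)]{Gradshteyn:80}, and then recombines the two pieces through the polar representation $(\lambda-is)^{\beta}=(\lambda^2+s^2)^{\beta/2}e^{-i\beta\eta}$, $\eta=\arctan(s/\lambda)$. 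You instead keep the complex exponential and invoke the Gamma-function identities $\int_0^\infty(e^{-cr}-1)r^{-1-\alpha}dr=\Gamma(-\alpha)c^\alpha$ and $\int_0^\infty(e^{-cr}-1+cr)r^{-1-\alpha}dr=\Gamma(-\alpha)c^\alpha$, extended to $\mathrm{Re}\,c\ge 0$ by analytic continuation. Your route is cleaner: it produces $(\lambda-i\k\cdot\fai)^\beta$ directly, avoids the table lookup, and makes the cancellation of $-is\,\Gamma(1-\beta)\lambda^{\beta-1}$ against the $\mathbf{b}\cdot\nabla_\X p$ correction transparent; the price is that you must justify the branch choice and the continuation up to the imaginary axis, which the paper's real-variable computation sidesteps. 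Your sign bookkeeping $\Gamma(-\beta)/|\Gamma(-\beta)|=(-1)^{\lceil\beta\rceil}$ and your handling of the symmetric-$m$ subcase both match the paper.

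One side claim in your write-up is wrong, although it does not affect the theorem. Your opening display carries the truncated compensator $i(\k\cdot\Y)\chi_{\{|\Y|<1\}}$, and your closing sentence asserts that the no-drift symbol \eqref{anisoLapTempFT} ``corresponds to the $\chi_{\{|\Y|<1\}}$-compensated definition.'' It does not: the Case II definition \eqref{anisoLapTempCase2} that the theorem refers to (and that your computation actually handles) uses the \emph{full} compensator $\Y\cdot\nabla_\X p$ together with the explicit drift correction, equivalently the Hadamard finite part. The truncated-compensator operator \eqref{anisoLapTemp} differs from it, because the two compensators differ by $i(\k\cdot\Y)\chi_{\{|\Y|\ge 1\}}$, whose integral against the L\'evy measure is
\begin{equation*}
  \int_{\mathbb{R}^n\backslash\{0\}} i(\k\cdot\Y)\,\chi_{\{|\Y|\ge 1\}}\,\frac{m(\Y)}{e^{\lambda|\Y|}|\Y|^{n+\beta}}\,d\Y
  = i(\k\cdot\mathbf{b})\int_1^\infty e^{-\lambda r}r^{-\beta}\,dr ,
\end{equation*}
and $\int_1^\infty e^{-\lambda r}r^{-\beta}dr>0$ while $\Gamma(1-\beta)\lambda^{\beta-1}\le 0$ for $1<\beta<2$. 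So whenever $\mathbf{b}=\int_{|\fai|=1}\fai\,m(\fai)\,d\fai\neq 0$, the truncated-compensator operator differs from the finite-part one by a genuine nonzero drift, and its symbol is \emph{not} \eqref{anisoLapTempFT}. Deleting that final clause (and writing the full compensator in your first display for Case II) makes your proof fully consistent.
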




We have just defined the anisotropic (tempered) fractional Laplacian by extending the L\'{e}vy measure $\nu(d\Y)$ with different probability distribution in different directions. More generally, another two variables jump length exponent $\beta$ and truncation exponent $\lambda$ can also be generalized to be anisotropic, i.e., $\beta(\fai)$ and $\lambda(\fai)$, sometimes abused by $\beta(\Y)$ and $\lambda(\Y)$ similar to $m(\Y)$. Let $\beta(\fai)\in(0,1)\cup(1,2)$ and $\lambda(\fai)\geq0$. When  $\lambda(\fai)\equiv0$, it goes back to anisotropic fractional Laplacian. Following \eqref{anisoLapFT}, \eqref{anisoLapTempFT}, \eqref{anisoLapTempCase1} and \eqref{anisoLapTempCase2}, the definitions of new anisotropic (tempered) fractional Laplacian are, respectively,
\begin{itemize}
  \item Case I: $0<\beta<1$ or $m$ is symmetric,
    \begin{equation}\label{anisoLapTemp2Case1}
    \begin{split}
      \tilde\Delta_m^{\beta/2,\lambda} p(\X,t)&=
      \int_{\mathbb{R}^n\backslash\{0\}} \left[p(\X-\Y)-p(\X) \right]
      \\
      &~~~~\cdot \frac{m(\Y)}{|\Gamma(-\beta(\Y))| e^{\lambda(\Y)|\Y|}|\Y|^{n+\beta(\Y)}} d\Y.
      \end{split}
    \end{equation}
  \item Case II: $1<\beta<2$ and $m$ is asymmetric,
    \begin{equation}\label{anisoLapTemp2Case2}
    \begin{split}
      \tilde\Delta_m^{\beta/2,\lambda} p(\X,t)
      &= \textrm{p.f.}~ \int_{\mathbb{R}^n\backslash\{0\}} \left[p(\X-\Y)-p(\X) \right]
      \\
      &~~~~\cdot
       \frac{m(\Y)}{|\Gamma(-\beta(\Y))|e^{\lambda(\Y)|\Y|}|\Y|^{n+\beta(\Y)}} d\Y  \\
      &= \int_{\mathbb{R}^n\backslash\{0\}} \left[p(\X-\Y)-p(\X)+(\Y\cdot\nabla_\X p(\X)) \right] \\
       & ~~~~  \cdot\frac{m(\Y)}{|\Gamma(-\beta(\Y))|e^{\lambda(\Y)|\Y|}|\Y|^{n+\beta(\Y)}} d\Y-(\mathbf{b}\cdot\nabla_{\X} p(\X)),
    \end{split}
    \end{equation}
    where $\mathbf{b}=\int_{|\fai|=1}\Gamma(1-\beta(\fai)) \,\lambda(\fai)^{\beta(\fai)-1} \,\fai\, m(\fai)/|\Gamma(-\beta(\fai))|d\fai$.
\end{itemize}
In Fourier space, the new operator has the form
\begin{equation}
  \mathscr{F}[\tilde\Delta_m^{\beta/2,\lambda}p(\X,t)] =(-1)^{\lceil\beta\rceil} \int_{|\fai|=1}\Big( (\lambda(\fai)-i\k\cdot\fai)^{\beta(\fai)}  - \lambda(\fai)^{\beta(\fai)} \Big)~m(\fai)d\fai \hat{p}(\k,t).
\end{equation}

\begin{figure}[ht]
\begin{minipage}{0.31\linewidth}
  \centerline{\includegraphics[scale=0.31]{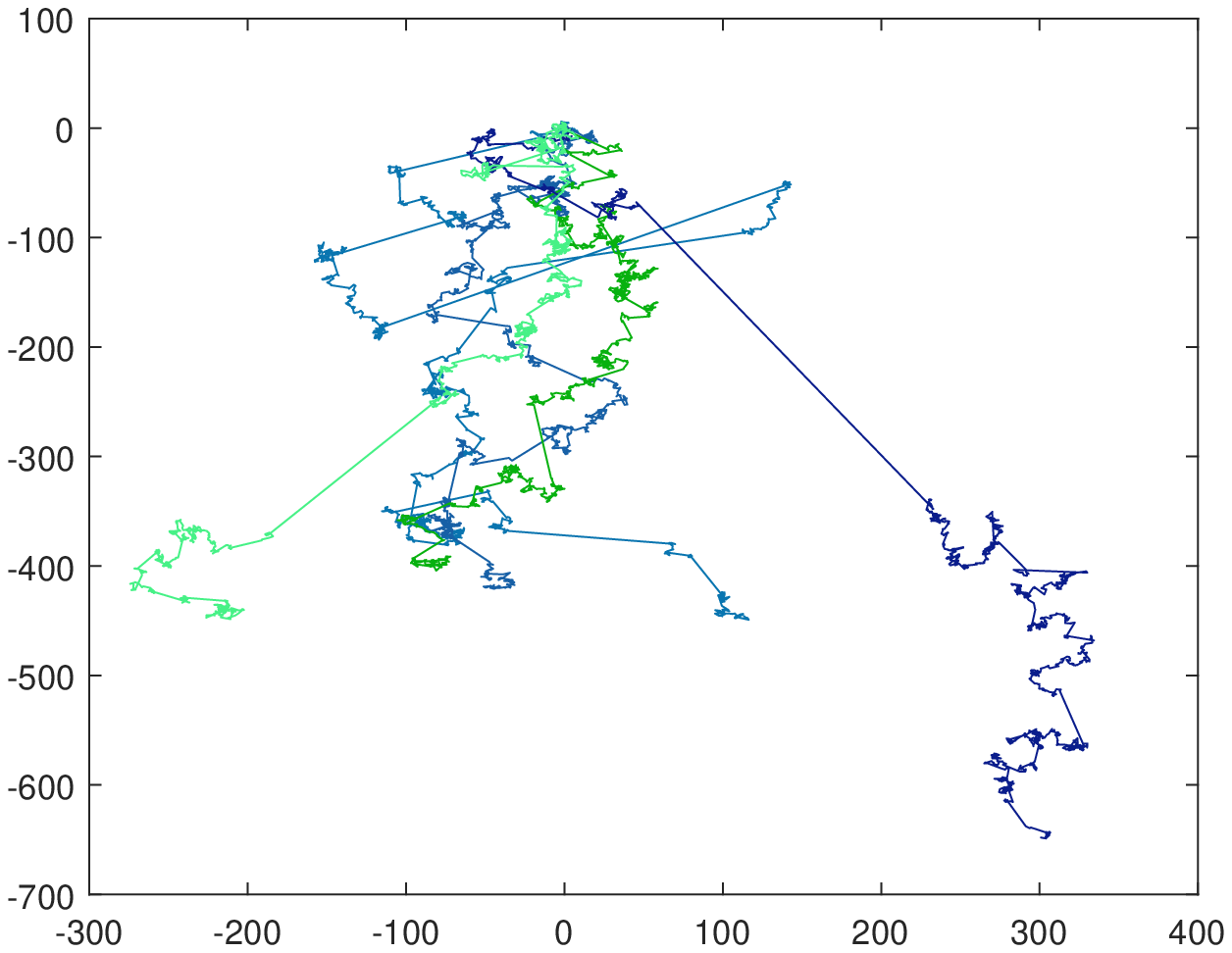}}
  \centerline{(a)}
\end{minipage}
\hfill
\begin{minipage}{0.31\linewidth}
  \centerline{\includegraphics[scale=0.31]{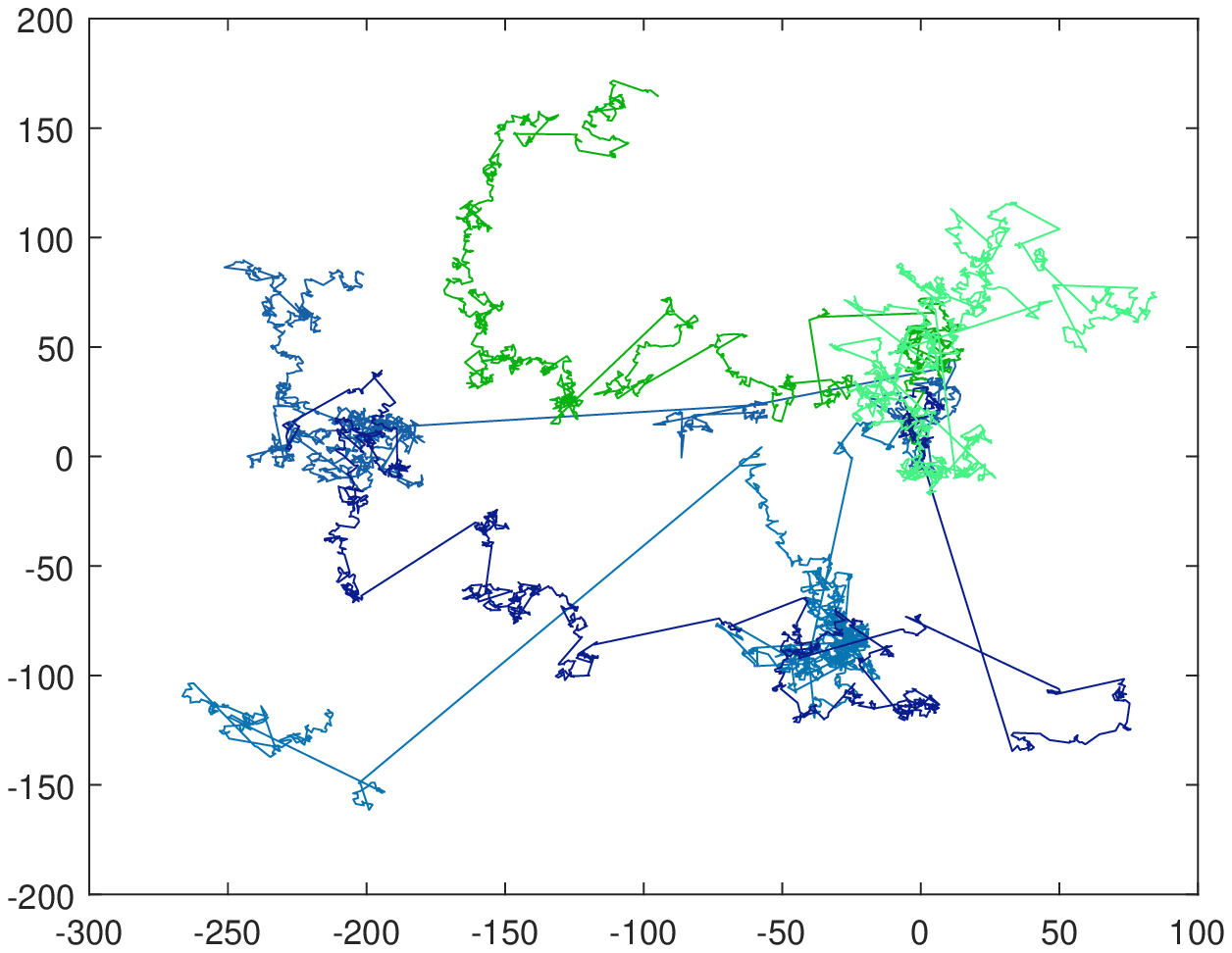}}
  \centerline{(b)}
\end{minipage}
\hfill
\begin{minipage}{0.31\linewidth}
  \centerline{\includegraphics[scale=0.31]{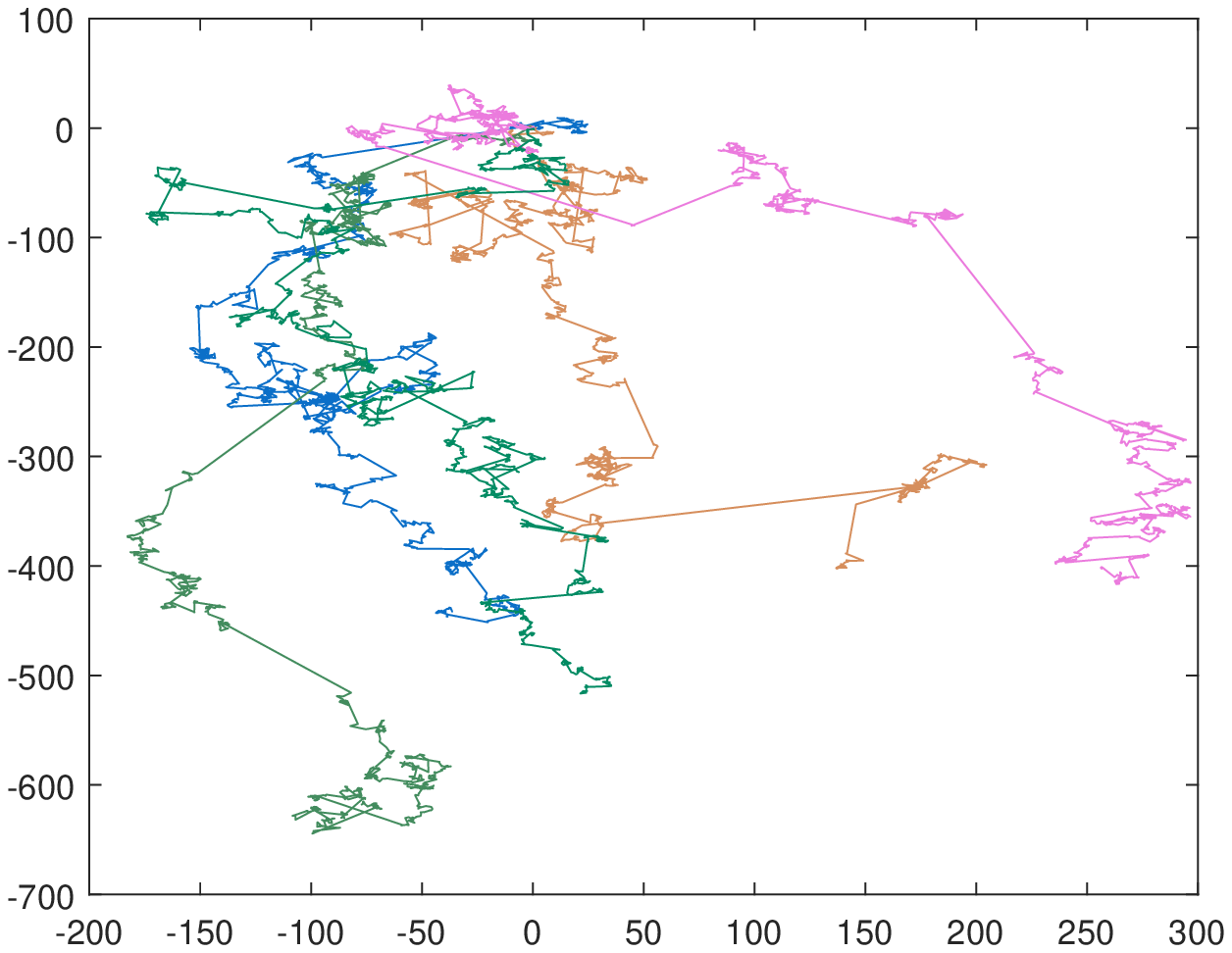}}
  \centerline{(c)}
\end{minipage}
\caption{Five random trajectories (2000 steps) in every graph with more general anisotropic L\'{e}vy measure. $(a)$: $\beta=1.8$ for $\textrm{arg}(\fai)\in(0,\pi)$, and $\beta=1.4$ for $\textrm{arg}(\fai)\in(\pi,2\pi)$; $(b)$: $\beta=1.8$ and  $m(\fai)=0.6/\pi$ for $\textrm{arg}(\fai)\in(0,\pi)$, and $\beta=1.4$ and $m(\fai)=0.4/\pi$ for $\textrm{arg}(\fai)\in(\pi,2\pi)$; $(c)$: $\beta=1.3$ and  $\lambda=0.01$ for $\textrm{arg}(\fai)\in(0,\pi)$, and $\beta=1.3$ and $\lambda=0$ for $\textrm{arg}(\fai)\in(\pi,2\pi)$.
To compare $(a)$ and $(b)$, the same underlying random number seeds (the trajectories of the same color) are used here.}\label{fig1.2}
\end{figure}

We also simulate the trajectories of particles with the new anisotropic L\'{e}vy measure $\nu(d\Y)$ defined in \eqref{anisoLapTemp2Case1}. As
Figure \ref{fig1.2} shows, we take the isotropic $m(\fai), \lambda(\fai)=0$ and the anisotropic $\beta(\fai)$ to be $1.8$ for $\textrm{arg}(\fai)\in(0,\pi)$ and $1.4$ for $\textrm{arg}(\fai)\in(\pi,2\pi)$ in $(a)$; the particles move farther downward than upward. In $(b)$, only difference with the parameter in $(a)$ is the anisotropic $m(\fai)$ being $0.6/\pi$ for $\textrm{arg}(\fai)\in(0,\pi)$ and $0.4/\pi$ for $\textrm{arg}(\fai)\in(\pi,2\pi)$. This choice of $m(\fai)$ aims to balance the anisotropic $\beta(\fai)$; as the second graph shows, the movements of particles become almost isotropic. In $(c)$, we take the isotropic $m(\fai)$ and $\beta(\fai)$, but the anisotropic $\lambda(\fai)$ to be $0.01$ for $\textrm{arg}(\fai)\in(0,\pi)$ and $0$ for $\textrm{arg}(\fai)\in(\pi,2\pi)$; the particles move farther downward than upward.

\begin{remark}
In the practical problem, the directional measure may depend on
concentration gradient.
%
To emphasize the effects caused by the directional gradient, the definition of the anisotropic (tempered) fractional Laplacian in \eqref{anisoLapTemp2Case1} can be extended to
  \begin{equation}
  \begin{split}
   \tilde\Delta_m^{\beta/2,\lambda}p(\X,t)=& (-1)^{\lceil\beta\rceil}\int_{\mathbb{R}^n\backslash\{0\}} \left[p(\X-\Y)-p(\X) \right]
  \\
  &
    \cdot\frac{m\left(\Y,\frac{\partial p(\Y)}{\partial\Y}\right)}{|\Gamma(-\beta(\Y))|e^{\lambda(\Y)|\Y|}|\Y|^{n+\beta(\Y)}}  d\Y,
   \end{split}
  \end{equation}
  where $m$ should be an increasing function of directional gradient $\frac{\partial p(\Y)}{\partial\Y}$.
\end{remark}

As a complement to the definition of the anisotropic (tempered) fractional Laplacian \eqref{anisoLapFT} and \eqref{anisoLapTempFT}, we also present the definition of the operator in the case that $\beta=1$, i.e., let $\nu(d\Y)= \frac{m(\Y)}{|\Y|^{n+1}} d\Y$, which still is a nonlocal operator. For the sake of simplicity, we assume that $m(\Y)$ is symmetric, then the term $(\Y\cdot\nabla_\X p(\X))_{\chi_{[|\Y|<1]}}$ in \eqref{aniso1} can be omitted. For the one dimensional asymmetric operators with $\beta=1$, see \cite{Kelly:17} for the details.

\begin{proposition}
  Let $\beta=1$ and $\lambda>0$. If the probability density function $m(\Y)$ is symmetric, then the Fourier symbols of the anisotropic fractional Laplacian and the corresponding tempered one, respectively, are
  \begin{equation}\label{anisoLapEqual1}
    \mathscr{F}[\Delta_m^{1/2}p(\X,t)]=   \,\frac{\pi}{2} \int_{|\fai|=1} |(\k\cdot\fai)| ~m(\fai)d\fai \cdot \hat{p}(\k,t)
  \end{equation}
  and
  \begin{equation}\label{anisoLapTempEqual1}
  \begin{split}
    \mathscr{F}[\Delta_m^{1/2,\lambda}p(\X,t)]= & \int_{|\fai|=1} \Big[(\k\cdot\fai)\arctan\left(\frac{\k\cdot\fai}{\lambda}\right) -\frac{\lambda}{2}\ln(\lambda^2+(\k\cdot\fai)^2) \\
    &
    +\lambda\ln\lambda\Big]m(\fai)d\fai \cdot \hat{p}(\k,t).
    \end{split}
  \end{equation}
\end{proposition}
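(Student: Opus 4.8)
The plan is to read off both Fourier symbols directly from the real-space definitions and reduce everything to one-dimensional radial integrals. For $\beta=1$ the operator is, following \eqref{aniso1} with $\nu(d\Y)=m(\Y)|\Y|^{-n-1}d\Y$ and with the first-order correction term deleted by the symmetry of $m$, the principal-value integral $\Delta_m^{1/2}p(\X)=\int_{\mathbb{R}^n\backslash\{0\}}[p(\X-\Y)-p(\X)]\,m(\Y)|\Y|^{-n-1}\,d\Y$, and likewise with the damping factor $e^{-\lambda|\Y|}$ for the tempered operator. First I would apply $\mathscr{F}$ and use the translation rule $\mathscr{F}[p(\cdot-\Y)](\k)=e^{i\k\cdot\Y}\hat p(\k)$ to factor $\hat p(\k)$ out front, leaving the symbol $\int_{\mathbb{R}^n\backslash\{0\}}(e^{i\k\cdot\Y}-1)\,m(\Y)|\Y|^{-n-1}\,d\Y$ (respectively with $e^{-\lambda|\Y|}$).

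Next I would pass to spherical coordinates $\Y=r\fai$ with $r>0$ and $|\fai|=1$, so that $d\Y=r^{n-1}dr\,d\fai$, the radial powers collapse to $r^{-2}$, and $m(\Y)=m(\fai)$ pulls outside the radial integration. This places the angular integral $\int_{|\fai|=1}m(\fai)(\cdots)d\fai$ in front and reduces the problem to the inner integral $\int_0^\infty(e^{ira}-1)r^{-2}\,dr$ with $a:=\k\cdot\fai$ (and the same with $e^{-\lambda r}$). Since $m$ is symmetric I would symmetrize over $\fai\leftrightarrow-\fai$, replacing $e^{ira}$ by $\cos(ra)$ and thus reducing to the real integrals $\int_0^\infty(\cos(ra)-1)r^{-2}\,dr$ and $\int_0^\infty(\cos(ra)-1)e^{-\lambda r}r^{-2}\,dr$. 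The cancellation $\cos(ra)-1=O(r^2)$ near the origin makes both absolutely convergent, so Fubini legitimizes swapping the angular and radial integrations and the symmetrization.

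The heart of the argument is these two radial integrals. For the untempered case I would rescale $u=r|a|$ and invoke the classical value $\int_0^\infty(1-\cos u)u^{-2}\,du=\pi/2$, giving $\int_0^\infty(1-\cos(ra))r^{-2}\,dr=\tfrac{\pi}{2}|a|$ and hence the $|\k\cdot\fai|$ term of \eqref{anisoLapEqual1}. For the tempered case the cleanest route is differentiation under the integral sign: writing $J(a)=\int_0^\infty(1-\cos(ra))e^{-\lambda r}r^{-2}\,dr$, one gets $J'(a)=\int_0^\infty\sin(ra)e^{-\lambda r}r^{-1}\,dr=\arctan(a/\lambda)$ for $\lambda>0$, and then integrating in $a$ from $0$ (with $J(0)=0$), using the antiderivative $\int\arctan(s/\lambda)\,ds=s\arctan(s/\lambda)-\tfrac{\lambda}{2}\ln(\lambda^2+s^2)$, reproduces exactly the bracket $(\k\cdot\fai)\arctan\!\big(\tfrac{\k\cdot\fai}{\lambda}\big)-\tfrac{\lambda}{2}\ln(\lambda^2+(\k\cdot\fai)^2)+\lambda\ln\lambda$ of \eqref{anisoLapTempEqual1}. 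Collecting the angular integral then yields the stated symbols; the global factor matches the normalization convention $(-1)^{\lceil\beta\rceil}$ of \eqref{anisoLapFT}--\eqref{anisoLapTempFT}, which equals $-1$ at $\beta=1$.

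The main obstacle is exactly the degeneracy that forces $\beta=1$ to be treated separately: the generic symbol $(-i\k\cdot\fai)^\beta$ of \eqref{anisoLapFT} is unusable here, since the normalizing constant $1/|\Gamma(-\beta)|$ has a pole at $\beta=1$ and $(-ia)^1$ is purely imaginary rather than proportional to $|a|$, so the $|a|$ and logarithmic structure can emerge only from evaluating the radial integrals by hand. The two subsidiary technical points are verifying the differentiation under the integral sign, where $\lambda>0$ is essential for integrability of $\sin(ra)e^{-\lambda r}/r$ at infinity, and justifying the symmetrization together with Fubini, both of which rest on the $O(r^2)$ cancellation at the origin and the symmetry of $m$.
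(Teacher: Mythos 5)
Your proof is correct, and its skeleton---take the Fourier transform, discard the imaginary part by the symmetry of $m$, pass to polar coordinates, and reduce everything to a one-dimensional radial integral in $a=\k\cdot\fai$---is exactly the paper's. Where you genuinely differ is in how that radial integral is evaluated, and your route is more elementary and self-contained. The paper integrates $r^{-2}$ by parts twice, producing the $\ln(r)$-weighted integrals $\int_0^\infty \ln(r)e^{-\lambda r}\cos(ra)\,dr$ and $\int_0^\infty \ln(r)e^{-\lambda r}\sin(ra)\,dr$, and then quotes the tabulated formulas of Gradshteyn--Ryzhik 4.441(1--2) (whose Euler-constant terms cancel in the combination) to reach the bracket of \eqref{anisoLapTempEqual1}; you instead differentiate $J(a)=\int_0^\infty(1-\cos(ra))e^{-\lambda r}r^{-2}\,dr$ under the integral sign, use the elementary identity $\int_0^\infty \sin(ra)e^{-\lambda r}r^{-1}\,dr=\arctan(a/\lambda)$ (valid precisely because $\lambda>0$, as you note), and integrate back from $J(0)=0$, which avoids both the table lookup and the logarithmic integrands; your domination argument justifying the differentiation is sound. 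A second, minor difference: the paper obtains \eqref{anisoLapEqual1} as the $\lambda\to0^{+}$ limit of \eqref{anisoLapTempEqual1}, whereas you compute it independently via the rescaling $u=r|a|$ and $\int_0^\infty(1-\cos u)u^{-2}\,du=\pi/2$; both are legitimate, the paper's way additionally exhibiting continuity in $\lambda$ at $0$. Finally, on signs: what both you and the paper actually compute from the real-space definition \eqref{anisoLapTemp} is a $(\cos(\k\cdot\Y)-1)$-type symbol, i.e.\ the \emph{negative} of the displayed brackets, so the statement's signs are consistent only if the $\beta=1$ operator is read with the prefactor $(-1)^{\lceil\beta\rceil}=-1$ of the convention \eqref{anisoLapFT}--\eqref{anisoLapTempFT}; you flag this reconciliation explicitly, which the paper's own proof silently elides, so this is a sign-convention wrinkle in the statement rather than a gap in your argument.
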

\begin{proof}
  We firstly prove the tempered case. Taking the Fourier transform of the right hand side of \eqref{anisoLapTemp}, we have
  \begin{equation*}
  \begin{split}
     \mathscr{F}\Big[\Delta_m^{1/2,\lambda} p(\X,t)\Big](\k)
     &= \int_{\mathbb{R}^n}\frac{e^{i\k\cdot\Y}-1}{e^{\lambda|\Y|}|\Y|^{n+1}}m(\Y)d\Y \cdot\hat{p}(\k,t)  \\
     &= \Bigg[\int_{\mathbb{R}^n}\frac{\cos(\k\cdot\Y)-1}{e^{\lambda|\Y|}|\Y|^{n+1}}m(\Y)d\Y \Bigg]  \cdot\hat{p}(\k,t),
  \end{split}
  \end{equation*}
  where the term $i\sin(\k\cdot\Y) $ vanishes due to the symmetry of $m(\Y)$.
  By polar coordinate transformation and integration by parts, we have
  \begin{equation*}
    \begin{split}
      \int_{\mathbb{R}^n}\frac{1-\cos(\k\cdot\Y)}{e^{\lambda|\Y|}|\Y|^{n+1}}m(\Y)d\Y
      =&\int_0^\infty\int_{|\fai|=1} r^{-2}e^{-\lambda r}(1-\cos(r\k\cdot\fai))m(\fai)d\fai dr \\
      =&-\lambda^2\int_0^\infty \ln(r)e^{-\lambda r} \int_{|\fai|=1} (1-\cos(r\k\cdot\fai))m(\fai)d\fai dr \\
      &+2\lambda  \int_0^\infty \ln(r)e^{-\lambda r} \int_{|\fai|=1}(\k\cdot\fai) \sin(r\k\cdot\fai) m(\fai)d\fai dr \\
       &         -\int_0^\infty \ln(r)e^{-\lambda r} \int_{|\fai|=1} (\k\cdot\fai)^2 \cos(r\k\cdot\fai) m(\fai)d\fai dr,
    \end{split}
  \end{equation*}
  from which Eq. \eqref{anisoLapTempEqual1} can be directly obtained by using \cite[Eq. 4.441(1-2)]{Gradshteyn:80}.

  For the proof of \eqref{anisoLapEqual1}, taking $\lambda=0$ in \eqref{anisoLapTempEqual1} leads to
  \begin{equation*}
    \begin{split}
      \mathscr{F}[\Delta_m^{1/2}p(\X,t)]
      &= \int_{|\fai|=1} (\k\cdot\fai) \frac{\pi}{2} \textrm{sgn}(\k\cdot\fai) ~m(\fai)d\fai \cdot \hat{p}(\k,t)  \\
      &= \frac{\pi}{2} \int_{|\fai|=1} |(\k\cdot\fai)| ~m(\fai)d\fai\cdot\hat{p}(\k,t).
    \end{split}
  \end{equation*}
\end{proof}
  Furthermore, if $m(\fai)$ is isotropic, then
  \begin{equation*}
    \begin{split}
      \mathscr{F}[\Delta_m^{1/2}p(\X,t)]
      &= \frac{\pi}{2\omega_n}\int_{|\fai|=1} |(\k\cdot\fai)| ~d\fai\cdot\hat{p}(\k,t)
       = \frac{\pi}{2\omega_n} |\k| \int_{|\fai|=1} |\cos(\theta_1)| ~d\fai \cdot\hat{p}(\k,t)  \\
      &= \frac{\pi}{2\omega_n} C_n |\k|  \int_0^\pi \sin^{n-2}(\theta_1) |\cos(\theta_1)| d\theta_1 \cdot\hat{p}(\k,t)\\
      &= \frac{1}{\omega_n} \frac{\pi}{n-1}C_n |\k|\cdot\hat{p}(\k,t)
       = \frac{1}{\omega_n}\frac{\pi^{\frac{n+1}{2}}}{\Gamma(\frac{n+1}{2})} |\k|\cdot\hat{p}(\k,t),
    \end{split}
  \end{equation*}
  where $\omega_n$ is the measure of the $n$ dimensional unit sphere, $\omega_n=2\pi^{n/2}/\Gamma(n/2)$ if $n\geq2$ and $\omega_n=2$ when $n=1$; the rotation invariance \cite[Proposition 3.3]{Nezza:12} of the integrand is used in the second equality, and $\cos(\theta_1)$ denotes one of the components of vector $\fai$,
  \begin{equation*}
    C_n=\Big(\int_0^{\pi}\sin^{n-3}(\theta_2)d\theta_2\Big)\cdots\Big(\int_0^{\pi}\sin(\theta_{n-2})d\theta_{n-2}\Big)\Big(\int_0^{2\pi}d\theta_{n-1}\Big)
     =\frac{2\pi^{\frac{n-1}{2}}}{\Gamma(\frac{n-1}{2})}.
  \end{equation*}
Following \eqref{anisoLapTempEqual1}, the Fourier symbol of the new anisotropic tempered fractional Laplacian when $\beta=1$ is
{\small
\begin{equation*}
    \mathscr{F}[\tilde\Delta_m^{1/2,\lambda}]= \int_{|\fai|=1} \Big[(\k\cdot\fai)\arctan\left(\frac{\k\cdot\fai}{\lambda(\fai)}\right) -\frac{\lambda(\fai)}{2}\ln(\lambda(\fai)^2+(\k\cdot\fai)^2)+\lambda(\fai)\ln(\lambda(\fai)) \Big] ~m(\fai)d\fai.
\end{equation*}}

All the discussions above are based on compound Poisson processes with different probability distribution of jump length for (tempered) L\'{e}vy flights, which render the deterministic governing equations with classical first derivative temporally. If instead, the fractional Poisson processes are taken as the renewal process, in which the time interval between each pair of events follows the power law distribution. Then the deterministic governing equations with Caputo fractional derivative temporally can be derived. More precisely, let $S(t)$ be a nondecreasing subordinator \cite{Chen:05} with Laplace exponent $s^\alpha$, $\alpha\in(0,1)$. Then consider a new process $\Z(t)=\X(E(t))$, where $\X(t)$ is the L\'{e}vy process discussed in \eqref{Levyf_kspace} with Fourier symbol $\Phi_0(\k)-1$ and the inverse subordinator $E(t)=\inf\{\tau>0:S(\tau)>t\}$. Then similarly to \cite[Eq. (16)-(17)]{Deng:17}, we have
\begin{equation*}
  p_z(\Z,t)=\int_0^\infty p_x(\Z,\tau)p_e(\tau,t)d\tau,
\end{equation*}
where $p_e(\tau,t)$ denotes the PDF of $E(t)$. Performing the Fourier-Laplace transform leads to
\begin{equation*}
  \tilde{\hat{p}}_z(\k,s)= \frac{s^{\alpha-1}}{s^\alpha+1-\Phi_0(\k)},
\end{equation*}
where the notation $\tilde{\cdot}$ denotes the Laplace transform from $t$ to $s$.
Arranging the terms and performing the inverse Laplace transform, one obtains
\begin{equation}\label{Timefrac}
  {}_0^C D_t^\alpha \hat{p}_z(\k,t)=(\Phi_0(\k)-1)\hat{p}_z(\k,t),
\end{equation}
the only difference of which with \eqref{Levyf_kspace} is the temporal derivative. Then, as the way of treating \eqref{Levyf_kspace}, taking the inverse Fourier transform results in the corresponding deterministic equations, the specific expressions of which depend on the different $\nu(d\Y)$.

\section{Multiple internal states with anisotropic diffusion}\label{Sec4}

Now, we derive the fractional Fokker-Planck and Feymann-Kac equations with multiple internal states, being both temporal and spatial, with the spatial operators being the anisotropic (tempered) fractional Laplacian $\Delta_m^{\beta/2,\lambda}$ presented in the above section.
We first try to make it clear what multiple internal states mean. By CTRW models, the motion of particles is characterised by two random variables, i.e., waiting time $\xi$ and jump length $\eta$. Assume the process only has three different possibilities of distributions of $\xi$ and/or $\eta$ at each step. We call it three internal states $S1$, $S2$ and $S3$, as in Figure \ref{fig2}. The information contained in each internal state $Si\, (i=1,2,3)$ is the distributions of $\xi$ and $\eta$ at current step. More general models may contain more information and more internal states. In one step, each possibility of the three will yield the next step still with three different possibilities. So step after step, a Markov chain is formed. As long as the initial distribution $|\textrm{init}\rangle$ and transition matrix $M$ are given, the distribution of internal states of $n$-th step can be easily obtained, denoted by $(M^T)^{n-1}|\textrm{init}\rangle$. Here, the element $m_{ij}$ of the matrix $M$ denotes the transition probability from state $i$ to state $j$, and the notations bras $\langle\cdot|$ and kets $|\cdot\rangle$ denote the row and column vectors, respectively.

\begin{figure}[ht]
  \centering
  \includegraphics[scale=0.64]{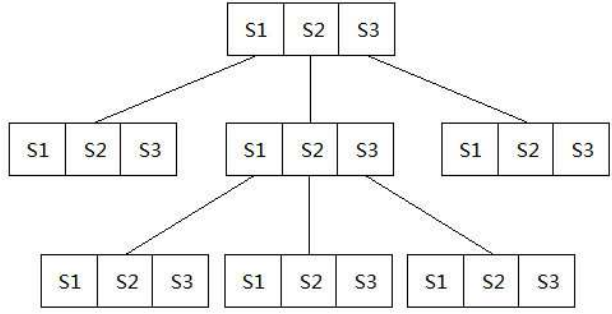}\\
  \caption{Three internal states in each step. Each internal state of $S1$, $S2$ and $S3$ contains different distributions of waiting time $\xi$ and/or jump length $\eta$.}
  \label{fig2}
\end{figure}

The number of the internal states are taken as $N$ for fractional Fokker-Planck and Feymann-Kac equations,
the derivation processes of which are similar to the ones given in \cite{Xu:17}. Here we only provide the derivation of Fokker-Planck equation.
We denote the column vector by capital letter and its components by lowercase letters, e.g., $|G(\X,t)\rangle$ with its components $g^i(\X,t), i=1,2\cdots,N$ being the PDF of finding the particle, at time $t$, position $\X$ in $n$ dimensional space and internal state $i$. Then define the waiting time distribution matrix $\Phi(t)=\textrm{diag}(\phi^1(t),\phi^2(t),\cdots,\phi^N(t))$ and the jump length one $\Lambda(\X)=\textrm{diag}(\lambda^1(\X),\lambda^2(\X),\cdots,\lambda^N(\X))$, where $\phi^i(t)$ and $\lambda^i(\X)$ are, respectively, the PDFs of waiting time and jump length at the $i$-th internal state.

Let $|Q_n(\X,t)\rangle$ be composed by $q_n^i(\X,t), i=1,2\cdots,N$, representing the PDF of the particle that just arrives at position $\X$, time $t$, and $i$-th internal state, after $n$ steps. Thus the matrix of survival probability is
\begin{equation*}
\begin{split}
    W(t)&=\textrm{diag}\left(w^1(t),\cdots,w^N(t)\right)  \\
    &=\textrm{diag}\left(\int_t^\infty \phi^1(\tau)d\tau,\cdots,\int_t^\infty \phi^N(\tau)d\tau\right)=I-\int_0^t \Phi(\tau)d\tau,
\end{split}
\end{equation*}
where $I$ denotes the identity matrix. This indicates that the Laplace transform of $W(t)$ is
\begin{equation*}
  \tilde{W}(s)=\frac{I- \tilde{\Phi}(s)}{s}.
\end{equation*}
For $G$ and $Q$, there exists
\begin{equation}\label{GandQ}
  |G(\X,t)\rangle=\int_0^t W(\tau)\sum_{n=0}^\infty |Q_n(\X,t)\rangle d\tau.
\end{equation}
On the other hand, for each component $q_n^i$ of $Q_n$, we have
\begin{equation*}
  q_n^i(\X,t)=\sum_{j=1}^N \int_0^t\int_{\mathbb{R}^n}m_{ji} \Lambda(\X-\Y)\Phi(t-\tau) q_{n-1}^j(\Y,\tau)d\Y d\tau.
\end{equation*}
Thus $Q$ satisfies
\begin{equation}\label{QandQ}
  |Q_n(\X,t)\rangle = \int_0^t\int_{\mathbb{R}^n} M^T \Lambda(\X-\Y)\Phi(t-\tau) |Q_{n-1}(\X,t)\rangle d\Y d\tau.
\end{equation}
Taking Fourier-Laplace transform to \eqref{GandQ} and \eqref{QandQ} leads to
\begin{equation}\label{Montroll}
  |\tilde{\hat{G}}(\k,s)\rangle= \frac{I-\tilde{\Phi}(s)}{s} [I-M^T \hat{\Lambda}(\k) \tilde{\Phi}(s)]^{-1} |\textrm{init}\rangle.
\end{equation}

The Fokker-Planck equation can be obtained by applying inverse Fourier-Laplace transform to \eqref{Montroll}. Here, we take the waiting time distributions as asymptotic power laws, i.e., in Laplace space $\tilde{\Phi}(s)\sim I-\textrm{diag}(s^{\alpha_1},\cdots,s^{\alpha_N}), 0<\alpha_1,\cdots,\alpha_N<1$. As for jump lengths, they obey the L\'{e}vy distributions, i.e., in Fourier space, each component of $\hat{\Lambda}(\k)$ is the form of \eqref{anisoLapTempFT} with particular $\beta_i$ and $\lambda_i$. Then, the Fokker-Planck equation with $N$ internal states is
\begin{equation}
\begin{split}
    M^T \frac{\partial}{\partial t} |G(\X,t)\rangle =&
  ~ (M^T-I) \textrm{diag}(D_t^{1-\alpha_1},\cdots,D_t^{1-\alpha_N}) |G(\X,t)\rangle \\
    &+ M^T \textrm{diag}(D_t^{1-\alpha_1}\Delta_m^{\beta_1/2,\lambda_1},\cdots,D_t^{1-\alpha_N}\Delta_m^{\beta_N/2,\lambda_N})  |G(\X,t)\rangle,
\end{split}
\end{equation}
where $D_t^{1-\alpha_i}$ is the Riemann-Liouville derivative defined as \cite{Podlubny:99}
\begin{equation}\label{RLderi}
  D_t^{1-\alpha_i} g^i(\X,t)= \frac{1}{\Gamma(\alpha_i)}\frac{\partial}{\partial t}\int_0^t\frac{g^i(\X,\tau)}{(t-\tau)^{1-\alpha_i}} d\tau,
\end{equation}
and $\Delta_m^{\beta_i/2,\lambda_i}$ denotes the anisotropic (tempered) fractional Laplacian with its Fourier transform $\hat{\lambda}^i(\k)$.

For Feymann-Kac equations, we define the functional $A=\int_0^t U(\X(\tau))d\tau$, where $U$ is a prespecified function. Denote $G(\X,A,t)$ to be the PDF of the functional $A$ and position $\X$ and $\bar{G}(\X,\rho,t)$ be the Fourier transform from $A$ to $\rho$. Then we directly have the Feymann-Kac equation of the forward version
{\small
\begin{equation}
  \begin{split}
    M^T &\frac{\partial}{\partial t} |\bar{G}(\X,\rho,t)\rangle =
  ~ (M^T-I) \textrm{diag}(\mathcal{D}_t^{1-\alpha_1},\cdots,\mathcal{D}_t^{1-\alpha_N}) |\bar{G}(\X,\rho,t)\rangle \\
    &+ M^T  \textrm{diag}(\Delta_m^{\beta_1/2,\lambda_1}\mathcal{D}_t^{1-\alpha_1},\cdots,\Delta_m^{\beta_N/2,\lambda_N}\mathcal{D}_t^{1-\alpha_N})  |\bar{G}(\X,\rho,t)\rangle
    +i\rho U(\X) M^T |\bar{G}(\X,\rho,t)\rangle,
\end{split}
\end{equation}}
where
\begin{equation}
  \mathcal{D}_t^{1-\alpha_i}\bar{g}^i(\X,\rho,t)= \frac{1}{\Gamma(\alpha_i)}\left(\frac{\partial}{\partial t}-i\rho U(\X)\right)
                                            \int_0^t \frac{e^{i(t-\tau)\rho U(\X)}}{(t-\tau)^{1-\alpha_i}} \bar{g}^i(\X,\rho,\tau) d\tau;
\end{equation}
and the backward version is
{\small
\begin{equation}
  \begin{split}
    M^T &\frac{\partial}{\partial t} |\bar{G}_{\X_0}(\rho,t)\rangle =
  ~ (M^T-I) \textrm{diag}(\mathcal{D}_t^{1-\alpha_1},\cdots,\mathcal{D}_t^{1-\alpha_N}) |\bar{G}_{\X_0}(\rho,t)\rangle \\
    &+ M^T  \textrm{diag}(\mathcal{D}_t^{1-\alpha_1}\Delta_{m,\X_0}^{\beta_1/2,\lambda_1},\cdots,\mathcal{D}_t^{1-\alpha_N}\Delta_{m,\X_0}^{\beta_N/2,\lambda_N})  |\bar{G}_{\X_0}(\rho,t)\rangle
    +i\rho U(\X_0) M^T |\bar{G}_{\X_0}(\rho,t)\rangle.
\end{split}
\end{equation}}

\section{Generalized boundary conditions}\label{Sec5}

In this section, we mainly consider the initial and boundary value problems with the anisotropic tempered fractional Laplacian. The case for the anisotropic fractional Laplacian can be obtained by taking $\lambda=0$. Following the ideas of \cite{Dybiec:06,Deng:17}, the local boundary $\partial\Omega$ itself can not be hit by the majority of discontinuous sample trajectories; based on this physical implication, these problems should be specified the generalized Dirichlet and Neumann type boundary conditions.
For the sake of simplicity, we only discuss the anisotropic tempered fractional Laplacian $\Delta_m^{\beta/2,\lambda}p(\X,t)$ defined in \eqref{anisoLapTempCase1}, i.e., $\lambda$ and $\beta$ are constant,
\begin{equation}\label{anisoLapTempFinal}
 \Delta_m^{\beta/2,\lambda}p(\X,t)= \frac{1}{|\Gamma(-\beta)|}\,
  \int_{\mathbb{R}^n\backslash\{0\}} \left[p(\X-\Y)-p(\X) \right] \frac{m(\Y)}{e^{\lambda|\Y|}|\Y|^{n+\beta}} d\Y.
\end{equation}
Consider the time dependent Dirichlet problem:
\begin{equation}\label{IBFracLap1}
\left\{
\begin{aligned}
& \frac{\partial p(\X,t)}{\partial t}-\Delta_m^{\beta/2,\lambda}p(\X,t)=f(\X,t)  &&\textrm{in}~~ \Omega,  \\
& p(\X,t)=g(\X,t)   &&\textrm{in}~~\mathbb{R}^n \backslash \Omega, \\[3pt]
& p(\X,0)=p_0(\X)   &&\textrm{in}~~\Omega;
\end{aligned} \right.
\end{equation}
and the Neumann problem:
\begin{equation}\label{IBFracLap2}
\left\{
\begin{aligned}
& \frac{\partial p(\X,t)}{\partial t}-\Delta_m^{\beta/2,\lambda}p(\X,t)=f(\X,t)  &&\textrm{in}~~ \Omega,  \\
& \Delta_m^{\beta/2,\lambda}p(\X,t)=g(\X,t)   &&\textrm{in}~~\mathbb{R}^n \backslash \Omega, \\[5pt]
& p(\X,0)=p_0(\X)   &&\textrm{in}~~\Omega.
\end{aligned} \right.
\end{equation}

\begin{remark}
  If we consider the model with Caputo fractional derivative in time, like \eqref{Timefrac}, its Dirichlet problem can be similarly formulated as above while its Neumann problem should be
\begin{equation}\label{IBFracLap3}
\left\{
\begin{aligned}
& {}_0^C D_t^\alpha p(\X,t)-\Delta_m^{\beta/2,\lambda}p(\X,t)=f(\X,t)  &&\textrm{in}~~ \Omega,  \\
& D_t^{1-\alpha} \Delta_m^{\beta/2,\lambda}p(\X,t)=g(\X,t)   &&\textrm{in}~~\mathbb{R}^n \backslash \Omega, \\[5pt]
& p(\X,0)=p_0(\X)   &&\textrm{in}~~\Omega,
\end{aligned} \right.
\end{equation}
where $D_t^{1-\alpha}$ is the Riemann-Liouville derivative, defined in \eqref{RLderi}. It should be noted that the Neumann boundary condition $g(\X,t)$ is time dependent both in \eqref{IBFracLap2} and \eqref{IBFracLap3}, meaning that the numerical flux of diffusing particles across the boundary $\partial \Omega$ is time dependent.
\end{remark}

\begin{remark}
  For the problem \eqref{IBFracLap2} with homogeneous Neumann boundary conditions $g=0$ and source term $f=0$, if the PDF $m(\Y)$ is symmetric, we can prove the property of conservation of mass inside $\Omega$.

More specifically, from the symmetry of $m(\Y)$, we have
  {\small
  \begin{equation*}
   \begin{split}
   & \int\!\!\!\int_{\Omega \times \Omega} \frac{p(\X)-p(\Y)}{e^{\lambda|\X-\Y|}|\X-\Y|^{n+\beta}}m(\X-\Y)d\X d\Y
    \\
  &  =\int\!\!\!\int_{\Omega \times \Omega} \frac{p(\Y)-p(\X)}{e^{\lambda|\X-\Y|}|\X-\Y|^{n+\beta}}m(\X-\Y)d\X d\Y=0.
   \end{split}
  \end{equation*}
  }
  Therefore, for \eqref{IBFracLap2} with $f=g=0$,
  \begin{equation*}
    \begin{split}
      \frac{\partial}{\partial t} \int_\Omega p \,d\X
      &= \int_\Omega \Delta_m^{\beta/2,\lambda}p(\X) \, d\X  \\
      &= -\frac{1}{|\Gamma(-\beta)|}\,\int_\Omega\int_{\mathbb{R}^n} \frac{p(\X)-p(\Y)}{e^{\lambda|\X-\Y|}|\X-\Y|^{n+\beta}}m(\X-\Y)d\Y d\X  \\
      &= -\frac{1}{|\Gamma(-\beta)|}\,\int_\Omega\int_{\mathbb{R}^n\backslash\Omega} \frac{p(\X)-p(\Y)}{e^{\lambda|\X-\Y|}|\X-\Y|^{n+\beta}}m(\X-\Y)d\Y d\X  \\
      &= -\frac{1}{|\Gamma(-\beta)|}\,\int_{\mathbb{R}^n\backslash\Omega}\int_\Omega \frac{p(\X)-p(\Y)}{e^{\lambda|\X-\Y|}|\X-\Y|^{n+\beta}}m(\X-\Y)d\X d\Y  \\
      &= -\frac{1}{|\Gamma(-\beta)|}\,\int_{\mathbb{R}^n\backslash\Omega}\int_{\mathbb{R}^n} \frac{p(\X)-p(\Y)}{e^{\lambda|\X-\Y|}|\X-\Y|^{n+\beta}}m(\X-\Y)d\X d\Y  \\
      &= -\frac{1}{|\Gamma(-\beta)|}\,\int_{\mathbb{R}^n\backslash\Omega} \Delta_m^{\beta/2,\lambda}p(\Y) \,d\Y =0 .
    \end{split}
  \end{equation*}
  Thus, the quantity $\int_\Omega p d\X$ does not depend on $t$, which means the conservation of mass inside $\Omega$.
\end{remark}

Based on the definition of $\Delta_m^{\beta/2,\lambda}p(\X,t)$ in \eqref{anisoLapTempFinal}, there is no need for the solution $p(\X,t)$ to vanish at infinity. To guarantee the convergence of the integral in \eqref{anisoLapTempFinal}, the solution $p(\X,t)$ should satisfy that there exist positive $M$ and $C$ such that when $|\X|>M$,
\begin{equation*}
  |p(\X,t)|<C e^{(\lambda-\epsilon)|\X|}  \qquad \textrm{for positive small } \epsilon.
\end{equation*}
This is an essential difference from Riesz fractional derivatives \cite{Yang:10}, which must vanish at infinity. A special example is that $p(\X,t)\equiv1$ and $\Delta_m^{\beta/2,\lambda}1\equiv0$. Indeed, that $p(\X,t)$ does not vanish at infinity still has some clear physical meaning, e.g., escape probability \cite{DengWW:17}.
Considering the case of $\beta=2$ in \eqref{anisoLapTempFT}, we have
\begin{equation}\label{GBCbeta2}
  \mathscr{F}[\Delta_m^{1,\lambda}p(\X,t)]= \int_{|\fai|=1}  \Big[ -(\k\cdot\fai)^2-2\lambda(i\k\cdot\fai) \Big] m(\fai) d\fai\cdot \hat{p}(\k,t) .
\end{equation}
In this case, $m(\fai)$ determines the covariance matrix $\mathbf{a}$ in \eqref{LKformula} \cite{Meerschaert:99}. If $m(\fai)$ is symmetric, the term containing $i\k$, corresponding to the first order derivative, vanishes. If not, from \eqref{GBCbeta2},
\begin{equation}\label{IBbetaequal2}
\begin{split}
    \mathscr{F}[\Delta_m^{1,\lambda}p(\X,t)]
    = \left((i\k)^TA(i\k)-2\lambda(i\k)^T\mathbf{b}\right)\hat{p}(\k,t),
\end{split}
\end{equation}
where the matrix $A=(a_{ij})_{n\times n}$ with $a_{ij}=\int_{|\fai|=1}\fai_i\fai_j \,m(\fai)d\fai$
and the vector $\mathbf{b}=(b_j)_{n\times 1}$ with $b_j=\int_{|\fai|=1}\fai_j \,m(\fai)d\fai$. This implies
\begin{equation}\label{LaplacianD1}
  \Delta_m^{1,\lambda}=\sum_{i,j=1}^n a_{ij}\frac{\partial^2}{\partial\X_i \partial\X_j} + 2\lambda\sum_{j=1}^n b_j\frac{\partial}{\partial \X_j}.
\end{equation}
Then the weak solution $p\in H^{1}(\mathbb{R}^n)$ of \eqref{IBFracLap2} satisfies, for all $q\in H^{1}(\mathbb{R}^n)$,
\begin{equation*}
  \int_\Omega \frac{\partial p}{\partial t}q d\X + \int_{\mathbb{R}^n}\sum_{i,j=1}^n a_{ij} \frac{\partial p}{\partial\X_i} \frac{\partial q}{\partial\X_j} d\X
  -2\lambda\int_{\mathbb{R}^n}\sum_{j=1}^n b_j \frac{\partial p}{\partial\X_j} \,q d\X
  =\int_\Omega fq d\X -\int_{\mathbb{R}^n\backslash\Omega}gq d\X  .
\end{equation*}
For the Neumann boundary conditions in \eqref{IBFracLap2}, we have
\begin{equation*}
\begin{split}
    &\int_{\mathbb{R}^n\backslash\Omega}gq d\X
    = \int_{\mathbb{R}^n\backslash\Omega} \sum_{i,j=1}^n a_{ij}\frac{\partial^2 p}{\partial\X_i \partial\X_j}\,q d\X
       +2\lambda\int_{\mathbb{R}^n\backslash\Omega}\sum_{j=1}^n b_j \frac{\partial p}{\partial\X_j} \,q d\X     \\
    =& ~-\int_{\partial\Omega} \sum_{i,j=1}^n a_{ij} \frac{\partial p}{\partial \mathbf{n}_i} \,q ds
        -\int_{\mathbb{R}^n\backslash\Omega} \sum_{i,j=1}^n a_{ij}\frac{\partial p}{\partial\X_i} \frac{\partial q}{\partial\X_j} d\X
        +2\lambda\int_{\mathbb{R}^n\backslash\Omega}\sum_{j=1}^n b_j \frac{\partial p}{\partial\X_j} \,q d\X .
\end{split}
\end{equation*}
Then
\begin{equation*}
\begin{split}
 & \int_\Omega \frac{\partial p}{\partial t}q d\X + \int_{\Omega}\sum_{i,j=1}^n a_{ij} \frac{\partial p}{\partial\X_i} \frac{\partial q}{\partial\X_j} d\X
   -2\lambda\int_{\Omega}\sum_{j=1}^n b_j \frac{\partial p}{\partial\X_i} \,q d\X
  \\
 & =\int_\Omega fq d\X + \int_{\partial\Omega} \sum_{i,j=1}^n a_{ij} \frac{\partial p}{\partial \mathbf{n}_i} \,q ds,
\end{split}
\end{equation*}
which means that the usual Neumann boundary condition is recovered. Similarly, for the Dirichlet boundary condition in \eqref{IBFracLap1}, when $\beta=2$, $\Delta_m^{1,\lambda}$ becomes a local operator. Then only the information of $g(\X,t)$ on the boundary $\partial\Omega$ is used to solve the problem, implying that the usual Dirichlet boundary condition is recovered.

\section{Well-posedness and regularity}\label{Sec6}
Here we show the well-posedness of the problems provided in the above section. First we define the fractional Sobolev space for $s\in(0,1)$,
\begin{equation*}
  H^s(\Omega):= \left\{v\in L^2(\Omega): |v|_{H^s(\Omega)}<\infty\right\},
\end{equation*}
where
\begin{equation*}
  |v|_{H^s(\Omega)}=\left(\int\!\!\!\!\!\int_{\Omega\times\Omega}\frac{(v(x)-v(y))^2}{|x-y|^{n+2s}} dxdy\right)^{1/2}
\end{equation*}
is the Aronszajn-Slobodeckij seminorm. The space $H^s(\Omega)$ is a Banach space, endowed with the norm
\begin{equation*}
  \|v\|_{H^s(\Omega)}:=\Big(\|v\|^2_{L^2(\Omega)}+|v|^2_{H^s(\Omega)}\Big)^{1/2}.
\end{equation*}
Equivalently, the space $H^s(\Omega)$ can be regarded as the restriction to $\Omega$ of functions in $H^s(\mathbb{R}^n)$. We define $H^s_0(\Omega)$ as the closure of $C_0^\infty(\Omega)$ in $H^s(\Omega)$.
Consider the space
\begin{equation*}
  \tilde{H}^s_0(\Omega)=\left\{v\in H^s(\mathbb{R}^n):~v=0~\textrm{in}~\mathbb{R}^n \backslash \Omega\right\}
\end{equation*}
equipped with the $H^s(\mathbb{R}^n)$ norm. The dual space of $\tilde{H}^s_0(\Omega)$ is denoted by $H^{-s}(\Omega)$ or $\tilde{H}^s_0(\Omega)'$.

If $g\in L^2(0,T;H^{\beta/2}({\mathbb R}^n))\cap H^1(0,T;H^{-\beta/2}(\mathbb{R}^n))$ and $f\in L^2(0,T;H^{-\beta/2}(\Omega))$, then the weak formulation of \eqref{IBFracLap1} is to find $p=\tilde{p}+g$ such that $\tilde{p}\in L^2(0,T;\tilde{H}^{\beta/2}_0(\Omega))\cap H^1(0,T;H^{-\beta/2}(\Omega))\hookrightarrow C([0,T];L^2(\Omega))$ and
\begin{equation}\label{IBweak1}
  \int_0^T \!\!\!\int_{\Omega} \partial_t \tilde{p}\, q\, d\X dt  + \frac{1}{2|\Gamma(-\beta)|} \int_0^T a(\tilde{p},q) dt
  = \int_0^T \!\!\!\int_\Omega (f+ \Delta_m^{\beta/2,\lambda}g-\partial_t g)\,q \,d\X dt
\end{equation}
for all $q\in L^2(0,T;\tilde{H}^{\beta/2}_0(\Omega))$, where
\begin{equation}\label{IBBilinear1}
\begin{split}
    a(\tilde{p},q) &= 2|\Gamma(-\beta)| \left(-\Delta_m^{\beta/2,\lambda}\tilde{p},q\right) \\
    &= 2\int\!\!\!\!\!\int_{\mathbb{R}^n\times\mathbb{R}^n} \frac{(\tilde{p}(\X)-\tilde{p}(\Y))}{e^{\lambda|\X-\Y|}|\X-\Y|^{n+\beta}} q(\X)m(\X-\Y)d\X d\Y   \\
    &= 2\int\!\!\!\!\!\int_{\mathbb{R}^n\times\mathbb{R}^n} \frac{(\tilde{p}(\Y)-\tilde{p}(\X))}{e^{\lambda|\X-\Y|}|\X-\Y|^{n+\beta}} q(\Y)m(\Y-\X)d\X d\Y   \\
    &= \int\!\!\!\!\!\int_{\mathbb{R}^n\times\mathbb{R}^n} \frac{(\tilde{p}(\X)-\tilde{p}(\Y))(q(\X)m(\X-\Y)-q(\Y)m(\Y-\X))}{e^{\lambda|\X-\Y|}|\X-\Y|^{n+\beta}} d\X d\Y.
\end{split}
\end{equation}
To show the well-posedness of the weak formulation $\eqref{IBweak1}$, the main task is to prove the continuity and coercivity of bilinear form $a(\tilde{p},q)$, while $l(q):=\int_\Omega (f+ \Delta_m^{\beta/2,\lambda}g-\partial_t g)\,q \,d\X$ is a continuous linear functional on $L^2(0,T;\tilde{H}^{\beta/2}_0(\Omega))$ evidently. Here, the bilinear form $a(\tilde{p},q)$ is based on \eqref{anisoLapTempCase1}. For \eqref{anisoLapTempCase2}, the bilinear form becomes a little bit complex. But the well-posedness still is valid since we mainly prove it in Fourier space.
\begin{lemma}\label{D_continuity}
  The bilinear form $a(p,q)$ is continuous on $H^{\beta/2}(\mathbb{R}^n)\times H^{\beta/2}(\mathbb{R}^n)$.
\end{lemma}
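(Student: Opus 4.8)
The plan is to establish the bound $|a(p,q)|\le C\,\|p\|_{H^{\beta/2}(\mathbb{R}^n)}\,\|q\|_{H^{\beta/2}(\mathbb{R}^n)}$ by passing to Fourier space, which is the route that treats the symmetric and asymmetric cases (and in particular the Hadamard finite-part integral of Case II) uniformly. Since $a(p,q)=2|\Gamma(-\beta)|\,(-\Delta_m^{\beta/2,\lambda}p,q)$ and Theorem~\ref{theo1} identifies the Fourier symbol of $\Delta_m^{\beta/2,\lambda}$ as in \eqref{anisoLapTempFT}, I would first write, via Parseval's identity,
\begin{equation*}
  a(p,q)=\frac{2|\Gamma(-\beta)|}{(2\pi)^n}\int_{\mathbb{R}^n}\Psi(\k)\,\hat{p}(\k)\,\overline{\hat{q}(\k)}\,d\k,\qquad
  \Psi(\k)=(-1)^{\lceil\beta\rceil+1}\!\int_{|\fai|=1}\!\big((\lambda-i\k\cdot\fai)^\beta-\lambda^\beta\big)m(\fai)\,d\fai,
\end{equation*}
where $\Psi$ is the symbol of $-\Delta_m^{\beta/2,\lambda}$. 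Because $p,q\in H^{\beta/2}(\mathbb{R}^n)$, both $\hat p$ and $\hat q$ lie in the weighted space $L^2((1+|\k|^2)^{\beta/2}d\k)$, so every integral below is finite and the manipulation is legitimate.

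The second step is the symbol estimate $|\Psi(\k)|\le C_\lambda\,(1+|\k|^2)^{\beta/2}$. This follows from $|(\lambda-i\k\cdot\fai)^\beta|=(\lambda^2+(\k\cdot\fai)^2)^{\beta/2}\le(\lambda^2+|\k|^2)^{\beta/2}$ for $|\fai|=1$, whence $|(\lambda-i\k\cdot\fai)^\beta-\lambda^\beta|\le(\lambda^2+|\k|^2)^{\beta/2}+\lambda^\beta\le C_\lambda(1+|\k|^2)^{\beta/2}$; integrating against $m(\fai)\,d\fai$ and using $\int_{|\fai|=1}m(\fai)\,d\fai=1$ gives the claim. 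Note that only the fact that $m$ is a probability density (not its boundedness) enters, so the estimate holds for any admissible $m$.

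Finally, I would conclude by Cauchy--Schwarz after splitting the weight symmetrically:
\begin{equation*}
  |a(p,q)|\le C\!\int_{\mathbb{R}^n}\!(1+|\k|^2)^{\beta/2}|\hat p(\k)||\hat q(\k)|\,d\k
  \le C\Big(\!\int(1+|\k|^2)^{\beta/2}|\hat p|^2\Big)^{\!1/2}\!\Big(\!\int(1+|\k|^2)^{\beta/2}|\hat q|^2\Big)^{\!1/2},
\end{equation*}
and the two factors are precisely $\|p\|_{H^{\beta/2}(\mathbb{R}^n)}$ and $\|q\|_{H^{\beta/2}(\mathbb{R}^n)}$ up to the standard equivalence between the Aronszajn--Slobodeckij norm and the Fourier-weighted norm. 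For the purely symmetric case one may instead argue directly in physical space: when $m(\X-\Y)=m(\Y-\X)$ the kernel in \eqref{IBBilinear1} reduces to $(p(\X)-p(\Y))(q(\X)-q(\Y))m(\X-\Y)\,e^{-\lambda|\X-\Y|}|\X-\Y|^{-n-\beta}$, and bounding $e^{-\lambda|\X-\Y|}\le 1$ and $m\le\|m\|_\infty$ followed by Cauchy--Schwarz against the Gagliardo seminorm $|\cdot|_{H^{\beta/2}}$ gives the estimate at once.

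The step I expect to be the main obstacle is making the Fourier reduction rigorous in Case II, where $\Delta_m^{\beta/2,\lambda}$ is defined only through a Hadamard finite-part integral and the physical-space kernel is not absolutely integrable: one must justify that the distributional action still equals multiplication by the smooth symbol $\Psi$, which is exactly the content guaranteed by Theorem~\ref{theo1}. The companion difficulty in any purely physical-space computation is the asymmetric ``defect'' term proportional to $q(\Y)\big(m(\X-\Y)-m(\Y-\X)\big)$, in which $q$ appears undifferentiated and which admits no clean seminorm bound; it is precisely to avoid this term that the Fourier route is preferable.
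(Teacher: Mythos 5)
Your proposal is correct and follows essentially the same route as the paper's proof: pass to Fourier space via Parseval and Theorem \ref{theo1}, bound the symbol of $-\Delta_m^{\beta/2,\lambda}$ by $C_\lambda(1+|\k|^2)^{\beta/2}$ using only that $m$ is a probability density on the sphere and $|\k\cdot\fai|\le|\k|$, then conclude by Cauchy--Schwarz in the weighted $L^2$ space equivalent to $H^{\beta/2}(\mathbb{R}^n)$. The only cosmetic difference is the intermediate inequality (the paper uses $(\lambda^2+|\k\cdot\fai|^2)^{\beta/2}\le 2^{\beta/2}(\lambda^\beta+|\k\cdot\fai|^\beta)$ rather than your $(\lambda^2+|\k|^2)^{\beta/2}+\lambda^\beta\le C_\lambda(1+|\k|^2)^{\beta/2}$), which yields the same bound up to constants.
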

\begin{proof}
We prove the continuity in the Fourier space. Using the Parseval equality and Theorem \ref{theo1}, we have
\begin{equation}\label{IBcontinuity1}
  \begin{split}
    a(p,q)=& 2|\Gamma(-\beta)| (\mathscr{F}[-\Delta_m^{\beta/2,\lambda}p],\mathscr{F}[q]) \\
    =& 2\Gamma(-\beta) \int_{\mathbb{R}^n}\!\!\!\int_{|\fai|=1} \Big( \lambda^\beta - (\lambda^2+(\k\cdot\fai)^2)^{\beta/2} e^{-i\beta\eta}\Big) ~m(\fai)d\fai ~\hat{p}(\k)\overline{\hat{q}(\k)}d\k, \\
  \end{split}
\end{equation}
where $\eta=\arctan\left(\frac{\k\cdot\fai}{\lambda}\right)$.
Then because of $(\lambda^2+|\k\cdot\fai|^2)^{\beta/2}\leq 2^{\beta/2}(\lambda^\beta+|\k\cdot\fai|^\beta)$,
\begin{equation}
  \begin{split}
    |a(p,q)|
    \leq&~  C\int_{\mathbb{R}^n}\!\!\!\int_{|\fai|=1} (1+|\k\cdot\fai|^\beta) m(\fai)d\fai ~|\hat{p}(\k)||\hat{q}(\k)|d\k   \\
    \leq&~  C\int_{\mathbb{R}^n}\!\!\!\int_{|\fai|=1} (1+|\k|^\beta) m(\fai)d\fai  ~|\hat{p}(\k)||\hat{q}(\k)|d\k   \\
    =&~ C\int_{\mathbb{R}^n}(1+|\k|^\beta) ~|\hat{p}(\k)||\hat{q}(\k)|d\k  \\
    \leq&~ C\Big(\int_{\mathbb{R}^n} (1+|\k|^\beta) |\hat{p}(\k)|^2 d\k\Big)^{1/2} \cdot \Big(\int_{\mathbb{R}^n} (1+|\k|^\beta) |\hat{q}(\k)|^2 d\k \Big)^{1/2}  \\
    =&~ C \|p\|_{H^{\beta/2}(\mathbb{R}^n)} \cdot \|q\|_{H^{\beta/2}(\mathbb{R}^n)},
  \end{split}
\end{equation}
which completes the proof.
\end{proof}

Before proving the coercivity of the bilinear form $a(q,q)$, we show a Lemma first. Because of the Parseval equality, there exists
\begin{equation}
\begin{split}
  a(q,q)=& 2|\Gamma(-\beta)|\left(\mathscr{F}[-\Delta_m^{\beta/2,\lambda}q],\mathscr{F}[q]\right) \\
    =& 2\Gamma(-\beta) \int_{\mathbb{R}^n}\!\!\!\int_{|\fai|=1} \Big( \lambda^\beta - (\lambda^2+(\k\cdot\fai)^2)^{\beta/2} e^{-i\beta\eta}\Big) ~m(\fai)d\fai ~|\hat{q}(\k)|^2d\k \\
    =& 2\Gamma(-\beta) \int_{\mathbb{R}^n} d(\k) ~|\hat{q}(\k)|^2d\k,
\end{split}
\end{equation}
where $\eta=\arctan(\frac{\k\cdot\fai}{\lambda})$ and $d(\k)=\int_{|\fai|=1} \Big( \lambda^\beta - (\lambda^2+(\k\cdot\fai)^2)^{\beta/2} e^{-i\beta\eta}\Big) ~m(\fai)d\fai$. Thus the complex conjugate of $d(\k)$ satisfies $\overline{d(\k)}=d(-\k)$, which implies that $\Im[d(\k)]$ is an odd function. On the other hand, since $\hat{q}(\k)=\int_{\mathbb{R}^n}e^{i\k\cdot\X}q(\X)d\X$ and $q(\X)$ is a real function, we have $\hat{q}(\k)=\overline{\hat{q}(-\k)}$ and $|\hat{q}(\k)|^2$ is an even function by
\begin{equation*}
  |\hat{q}(\k)|^2=\hat{q}(\k)\overline{\hat{q}(\k)}=\overline{\hat{q}(-\k)}\hat{q}(-\k)=|\hat{q}(-\k)|^2.
\end{equation*}
Therefore, $\Im[a(q,q)]=0$ and
\begin{equation}
  a(q,q)= 2\Gamma(-\beta) \int_{\mathbb{R}^n}\!\!\!\int_{|\fai|=1}
  \Big( \lambda^\beta - (\lambda^2+(\k\cdot\fai)^2)^{\beta/2} \cos(\beta\eta)\Big) ~m(\fai)d\fai ~|\hat{q}(\k)|^2d\k. \\
\end{equation}
For the isotropic case, $m(\fai)$ is a constant, and
\begin{equation}\label{fstepFTiso}
    \mathscr{F}[-\Delta^{\beta/2,\lambda}q(\X)]
    = \frac{(-1)^{\lceil\beta\rceil}}{\omega_n} \int_{|\fai|=1} \Big( \lambda^\beta -  (\lambda^2+(\k\cdot\fai)^2)^{\frac{\beta}{2}} \cos(\beta\eta) \Big) ~d\fai \cdot \hat{q}(\k).
\end{equation}
In the following, we show that under some reasonable assumptions on $m(\fai)$ there exists a constant $C>0$ such that
\begin{equation}\label{fstepcompare}
 \Re[\mathscr{F}[-\Delta_m^{\beta/2,\lambda}q(\X)]\cdot \overline{\hat{q}(\k)}] \geq C\mathscr{F}[-\Delta^{\beta/2,\lambda}q(\X)]\cdot \overline{\hat{q}(\k)} \qquad \forall \,\k\in\mathbb{R}^n,
\end{equation}
where
\begin{equation*}
  \Re[\mathscr{F}[-\Delta_m^{\beta/2,\lambda}q(\X)] \cdot\overline{\hat{q}(\k)}]= (-1)^{\lceil\beta\rceil} \int_{|\fai|=1} \Big( \lambda^\beta -   (\lambda^2+(\k\cdot\fai)^2)^{\frac{\beta}{2}} \cos(\beta\eta) \Big)~m(\fai)d\fai\cdot |\hat{q}(\k)|^2.
\end{equation*}

\begin{definition}
	  A probability density function $m(\fai)$ on the unit sphere in $\mathbb{R}^n$ is said to be nondegenerate if the set $A_m(\fai):=\{\fai;m(\fai)\neq0\}$ can span the whole space $\mathbb{R}^n$.
\end{definition}

\begin{lemma}\label{ConditionForM}
  Let $\beta\in(0,1)\cup(1,2)$. For the operator $-\Delta_m^{\beta/2,\lambda}$, the non-degeneration of a probability density function $m(\fai)$ on the unit sphere is  equivalent to \eqref{fstepcompare}.
\end{lemma}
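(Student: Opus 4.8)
The plan is to cancel the common nonnegative factor $|\hat q(\k)|^2$ and reduce \eqref{fstepcompare} to a pointwise inequality between the two real Fourier multipliers. Setting
\[
g(\k,\fai):=(-1)^{\lceil\beta\rceil}\Big(\lambda^\beta-(\lambda^2+(\k\cdot\fai)^2)^{\beta/2}\cos(\beta\eta)\Big),\qquad \eta=\arctan\tfrac{\k\cdot\fai}{\lambda},
\]
the displayed expression for $\Re[\mathscr{F}[-\Delta_m^{\beta/2,\lambda}q]\,\overline{\hat q}]$ together with \eqref{fstepFTiso} shows that \eqref{fstepcompare} holds for all $q$ with some $C>0$ if and only if
\[
\Phi_m(\k):=\int_{|\fai|=1} g(\k,\fai)\,m(\fai)\,d\fai\ \ge\ \frac{C}{\omega_n}\int_{|\fai|=1} g(\k,\fai)\,d\fai=:C\,\Phi_1(\k)\qquad\forall\,\k\in\mathbb{R}^n.
\]
The first and decisive step is to determine the sign and zero set of the common kernel $g$. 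Using $\lambda-i(\k\cdot\fai)=(\lambda^2+(\k\cdot\fai)^2)^{1/2}e^{-i\eta}$ one has $g(\k,\fai)=(-1)^{\lceil\beta\rceil}\big(\lambda^\beta-\Re[(\lambda-i\,\k\cdot\fai)^\beta]\big)$; differentiating in $u:=\k\cdot\fai$ gives $\partial_u g=(-1)^{\lceil\beta\rceil}\beta(\lambda^2+u^2)^{(\beta-1)/2}\sin((\beta-1)\eta)$, and a sign check in each range $\beta\in(0,1)$ and $\beta\in(1,2)$ shows this is positive for $u>0$. Hence $g$ is even in $u$, strictly increasing in $|u|$, and vanishes exactly at $u=0$, so $g(\k,\fai)\ge0$ with $g(\k,\fai)=0$ iff $\k\cdot\fai=0$. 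In particular $\Phi_m,\Phi_1\ge0$ and $\Phi_1(\k)>0$ for $\k\neq0$.

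Assume first that $m$ is nondegenerate. I record the two scaling regimes: a Taylor expansion at $u=0$ gives $g(\k,\fai)=c_1(\beta,\lambda)\,(\k\cdot\fai)^2+O((\k\cdot\fai)^4)$ with $c_1=\tfrac12|\beta(\beta-1)|\lambda^{\beta-2}>0$, while as $|\k\cdot\fai|\to\infty$ one gets $g(\k,\fai)\sim|\cos(\beta\pi/2)|\,|\k\cdot\fai|^\beta$. Writing $\k=\rho\hat\k$ and dividing numerator and denominator of $R(\rho,\hat\k):=\Phi_m(\rho\hat\k)/\Phi_1(\rho\hat\k)$ by $\rho^2$, the ratio extends continuously to $\rho=0$ with limit $n\,\hat\k^{T}A\hat\k$, where $A=\int_{|\fai|=1}\fai\fai^{T}m(\fai)\,d\fai$ is exactly the matrix of \eqref{IBbetaequal2}; nondegeneracy makes $v^{T}Av=\int(v\cdot\fai)^2 m\,d\fai>0$ for all $v\neq0$, so $A$ is positive definite and this limit is $\ge n\,\lambda_{\min}(A)>0$. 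Dividing instead by $\rho^\beta$ shows $R$ extends continuously to $\rho=\infty$ with limit $\omega_n\big(\int|\hat\k\cdot\fai|^\beta m\,d\fai\big)\big/\big(\int|\hat\k\cdot\fai|^\beta d\fai\big)$, whose numerator is again strictly positive for every $\hat\k$ by nondegeneracy. On any shell $\rho\in[\varepsilon,1/\varepsilon]$ continuity and the strict positivity of $\Phi_m$ (its integrand is positive on a set of positive $m$-measure, since $\mathrm{supp}\,m$ is contained in no hyperplane $\hat\k^{\perp}$) give a positive lower bound. Reparametrising $\rho=\tan s$, $s\in[0,\pi/2]$, turns $R$ into a continuous strictly positive function on the compact set $[0,\pi/2]\times S^{n-1}$, so $C:=\min R>0$ yields \eqref{fstepcompare}.

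For the converse I argue by contraposition. If $m$ is degenerate, then $A_m$ spans only a proper subspace, so there is a unit vector $\hat\k_0$ with $\hat\k_0\cdot\fai=0$ for every $\fai$ in the support of $m$; since $g$ vanishes whenever its argument $\k\cdot\fai=0$, we get $\Phi_m(\rho\hat\k_0)=0$ for all $\rho$, whereas $\Phi_1(\rho\hat\k_0)>0$ for $\rho>0$. Thus no $C>0$ can satisfy \eqref{fstepcompare}, and the two statements are equivalent. I expect the main obstacle to be twofold: establishing the exact sign and zero set of the angular kernel $g$ (the monotonicity computation above), and — because the tempering $\lambda>0$ destroys scale invariance — upgrading the directionwise positivity of $\Phi_m$ to a single constant $C$ valid for all $|\k|$, which is precisely what the $\rho\to0,\infty$ asymptotics combined with compactness of $[0,\pi/2]\times S^{n-1}$ are designed to handle.
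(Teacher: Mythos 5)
Your proof is correct and takes essentially the same route as the paper's: reduce \eqref{fstepcompare} to a pointwise comparison of the two Fourier multipliers, refute it in the degenerate case via a direction $\hat\k_0$ orthogonal to the support of $m$, and in the nondegenerate case combine strict positivity of the anisotropic multiplier for $\k\neq\mathbf{0}$ with the $\mathcal{O}(|\k|^2)$ behaviour at the origin and the $\mathcal{O}(|\k|^\beta)$ behaviour at infinity. The only difference is one of completeness: you derive the sign and monotonicity of the kernel $g$ directly (the paper cites \cite[Appendix]{Zhang:17} for $f'\geq 0$, $f(0)=0$) and you make explicit the limits ($n\,\hat\k^{T}A\hat\k$ at $\rho=0$, the directional $\beta$-moment ratio at $\rho=\infty$) together with the compactness argument that the paper compresses into ``by a simple calculation \ldots then \eqref{fstepcompare} holds.''
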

\begin{proof}
Denote $f(\k\cdot\fai)=(-1)^{\lceil\beta\rceil} (\lambda^\beta - (\lambda^2+(\k\cdot\fai)^2)^{\frac{\beta}{2}} \cos(\beta\eta) )$. Then $f'\geq0$ and $f_{\min}=f(0)=0$ \cite[Appendix]{Zhang:17}, which implies that  $\mathscr{F}[-\Delta^{\beta/2,\lambda}q(\X)]  \cdot \overline{\hat{q}(k)}\geq0$ and $\Re[\mathscr{F}[-\Delta_m^{\beta/2,\lambda}q(\X)] \cdot \overline{\hat{q}(k)}] \geq0$.
  If $\k=\mathbf{0}$, then \eqref{fstepcompare} holds. If $\k\neq\mathbf{0}$, then \eqref{fstepcompare} is equivalent to
  \begin{equation*}
    \frac{ \Re[\mathscr{F}[-\Delta_m^{\beta/2,\lambda}q(\X)]\cdot \overline{\hat{q}(k)}] } {\mathscr{F}[-\Delta^{\beta/2,\lambda}q(\X)]  \cdot \overline{\hat{q}(k)}}
    \geq C>0 \qquad \forall \,\k\in\mathbb{R}^n.
  \end{equation*}
First we prove the sufficiency. If the probability density function $m(\fai)$ is degenerate, i.e., $\textrm{span}\{A_m(\fai)\}$ is the strict subspace of $\mathbb{R}^n$, then there exists $\mathbb{Q}$ being the orthogonal complement of $\textrm{span}\{A_m(\fai)\}$ in $\mathbb{R}^n$, satisfying $\forall \k\in\mathbb{Q}$ and $\forall \fai\in A_m(\fai)$, $(\k\cdot\fai)=0$. In this case, there exist $\k,\fai\in\mathbb{Q}\subset\mathbb{R}^n$ s.t. $(\k\cdot\fai)>0$. It means that $\mathscr{F}[-\Delta^{\beta/2,\lambda}q(\X)]  \cdot \overline{\hat{q}(k)}>0$ but $\Re[\mathscr{F}[-\Delta_m^{\beta/2,\lambda}q(\X)] \cdot \overline{\hat{q}(k)}] =0$. Then \eqref{fstepcompare} does not hold.

  On the contrary, for necessity, we assume that $m(\fai)$ is nondegenerate. If $q(\X)$ does not equal to zero but $ \Re[\mathscr{F}[-\Delta_m^{\beta/2,\lambda}q(\X)]\cdot \overline{\hat{q}(k)}] =0$, then for any $\fai$ and $\k$, $f(\k\cdot\fai)m(\fai)=0$ almost everywhere. Since $m(\fai)$ is nondegenerate, $\k$ must be orthogonal to the space $\textrm{span}\{A_m(\fai)\}(=\mathbb{R}^n)$. So $\k$ must be a zero vector, which means that $ \Re[\mathscr{F}[-\Delta_m^{\beta/2,\lambda}q(\X)]\cdot \overline{\hat{q}(k)}] =0$  has the only zero point $\k=\mathbf{0}$ if $q(\X)$ is not zero. By a simple calculation, both $ \Re[\mathscr{F}[-\Delta_m^{\beta/2,\lambda}q(\X)]\cdot \overline{\hat{q}(k)}] $ and $\mathscr{F}[-\Delta^{\beta/2,\lambda}q(\X)]  \cdot \overline{\hat{q}(k)}$ are $\mathcal{O}(|\k|^2)$ when $|\k|\rightarrow0$ and $\mathcal{O}(|\k|^\beta)$ when $|\k|\rightarrow\infty$. Then Eq. \eqref{fstepcompare} holds.
\end{proof}

\begin{lemma}
  Let $q\in\tilde{H}^{\beta/2}_0(\Omega)$. If the probability density function $m(\fai)$ is nondegenerate, then the bilinear form $a(q,q)\geq C|q|^2_{H^{\beta/2}(\mathbb{R}^n)}$, i.e., it is coercive in $H^{\beta/2}(\mathbb{R}^n)$.
\end{lemma}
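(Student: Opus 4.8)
The plan is to argue entirely on the Fourier side: first reduce the anisotropic symbol to the isotropic one using the nondegeneracy of $m$, and then split the resulting frequency integral into high and low modes, the latter being the only genuine difficulty.

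First I would record that the quadratic form is already real. By the parity computation preceding Lemma~\ref{ConditionForM} one has $\Im[a(q,q)]=0$, so
\[
a(q,q)=2|\Gamma(-\beta)|\int_{\mathbb{R}^n}\Re\!\left[\mathscr{F}[-\Delta_m^{\beta/2,\lambda}q]\,\overline{\hat q(\k)}\right]d\k,
\]
where the integrand equals $\int_{|\fai|=1}f(\k\cdot\fai)\,m(\fai)\,d\fai\,|\hat q(\k)|^2$ with $f\ge0$ and $f(0)=0$. This makes $a(q,q)\ge0$ transparent, and the task is the matching lower bound. Since $m$ is nondegenerate, Lemma~\ref{ConditionForM} supplies the pointwise comparison \eqref{fstepcompare} between the anisotropic and the isotropic densities; integrating it against $d\k$ produces a constant $C>0$ with
\[
a(q,q)\ge C\int_{\mathbb{R}^n}\mathscr{F}[-\Delta^{\beta/2,\lambda}q]\,\overline{\hat q(\k)}\,d\k=C\int_{\mathbb{R}^n}\psi(\k)\,|\hat q(\k)|^2\,d\k,
\]
where $\psi(\k)$ is the rotation--invariant isotropic symbol of \eqref{fstepFTiso}. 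Thus it suffices to bound this isotropic form below by $|q|^2_{H^{\beta/2}(\mathbb{R}^n)}$, which by Parseval equals, up to a constant, $\int_{\mathbb{R}^n}|\k|^\beta|\hat q(\k)|^2\,d\k$.

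Next I would pin down the two--sided behaviour of $\psi$. Using $f'\ge0$, $f(0)=0$, continuity, strict positivity for $\k\neq\mathbf{0}$, and the asymptotics already noted in the proof of Lemma~\ref{ConditionForM} (namely $\psi(\k)=\mathcal{O}(|\k|^2)$ as $|\k|\to0$ and $\mathcal{O}(|\k|^\beta)$ as $|\k|\to\infty$), a compactness argument on the ratios gives constants $c_1,c_2>0$ with $\psi(\k)\ge c_1|\k|^\beta$ for $|\k|\ge1$ and $\psi(\k)\ge c_2|\k|^2$ for $0<|\k|\le1$. The high--frequency contribution is then immediate, $\int_{|\k|\ge1}\psi|\hat q|^2\ge c_1\int_{|\k|\ge1}|\k|^\beta|\hat q|^2$.

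Finally, the delicate point is the low--frequency regime. Tempering forces $\psi$ to vanish only to second order near the origin, so no pointwise bound $\psi(\k)\ge C|\k|^\beta$ can hold for small $\k$; in fact coercivity in the $H^{\beta/2}$ seminorm fails on all of $H^{\beta/2}(\mathbb{R}^n)$, and it is precisely here that the hypothesis $q\in\tilde H^{\beta/2}_0(\Omega)$ with $\Omega$ bounded is indispensable. The compact support of $q$ prevents $\hat q$ from concentrating at $\k=\mathbf{0}$ (an uncertainty--principle effect) and yields a fractional Poincar\'e--Friedrichs inequality of the form $\|q\|^2_{L^2(\Omega)}\le C\int_{\mathbb{R}^n}\psi(\k)|\hat q(\k)|^2\,d\k$. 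Since $\int_{|\k|\le1}|\k|^\beta|\hat q|^2\le\|q\|^2_{L^2}$, this controls the remaining low--frequency piece, and combining it with the high--frequency estimate delivers $a(q,q)\ge C|q|^2_{H^{\beta/2}(\mathbb{R}^n)}$. When $\lambda=0$ the symbol reduces to $\psi(\k)=c|\k|^\beta$ and the Poincar\'e step is unnecessary, which isolates the tempering as the sole source of difficulty; I therefore expect this low--frequency argument, following the one--dimensional technique of \cite{Zhang:17} together with the boundedness of $\Omega$, to be the main obstacle, the reduction to the isotropic symbol via Lemma~\ref{ConditionForM} being routine by comparison.
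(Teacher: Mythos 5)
Your first step---reducing the anisotropic symbol to the isotropic one by integrating the pointwise comparison \eqref{fstepcompare} furnished by Lemma~\ref{ConditionForM}---is exactly the paper's first step, and your diagnosis of the low-frequency issue is correct: tempering makes the isotropic symbol $\psi$ of \eqref{fstepFTiso} vanish to second order at $\k=\mathbf{0}$, so no pointwise bound $\psi(\k)\geq C|\k|^\beta$ can hold near the origin, coercivity genuinely fails on all of $H^{\beta/2}(\mathbb{R}^n)$, and the compact support of $q$ must enter. But at precisely that point your proof stops: the fractional Poincar\'e--Friedrichs inequality $\|q\|^2_{L^2(\Omega)}\leq C\int_{\mathbb{R}^n}\psi(\k)|\hat q(\k)|^2\,d\k$ is asserted as an ``uncertainty-principle effect'' but never proved, and by Parseval it is equivalent (up to constants) to $\|q\|^2_{L^2(\Omega)}\leq C\,\tilde a(q,q)$, which is the entire analytic content of the second half of the lemma. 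The heuristic that $\hat q$ cannot concentrate at $\k=\mathbf{0}$ does not by itself produce a quantitative estimate; making it rigorous while staying ``entirely on the Fourier side'' would require invoking a genuine uncertainty-principle theorem (of Amrein--Berthier or Logvinenko--Sereda type), since the compact support of $q$ is otherwise invisible in a pure Fourier-variable argument. So there is a real gap, exactly at the step you yourself flag as ``the main obstacle.''

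The paper closes this gap by leaving Fourier space, which is what makes the support condition usable. Fix a ball $B_\rho\supset\Omega$ and let $\delta=\inf_{\X\in\Omega,\Y\in\partial B_\rho}|\X-\Y|$. On $B_\rho\times B_\rho$ the tempering factor obeys $e^{-\lambda|\X-\Y|}\geq e^{-2\lambda\rho}$, so restricting the double integral gives $\tilde a(q,q)\geq e^{-2\lambda\rho}|q|^2_{H^{\beta/2}(B_\rho)}$: the tempering is removed at the cost of a constant. Then, because $q\equiv 0$ on $B_\rho\setminus\Omega$,
\[
|q|^2_{H^{\beta/2}(B_\rho)}\;\geq\;\int_\Omega q^2(\X)\int_{B_\rho\setminus\Omega}\frac{d\Y}{|\X-\Y|^{n+\beta}}\,d\X\;\geq\;(2\rho)^{-n-\beta}\,|B_\rho\setminus\Omega|\;\|q\|^2_{L^2(\Omega)},
\]
which is precisely the Poincar\'e inequality you needed; a further use of the support condition (splitting off $\Y\in\mathbb{R}^n\setminus B_\rho$, where the kernel integral is bounded by $2\omega_n\delta^{-\beta}/\beta$) yields $|q|^2_{H^{\beta/2}(\mathbb{R}^n)}\leq C|q|^2_{H^{\beta/2}(B_\rho)}$, so the estimate closes with the full seminorm on $\mathbb{R}^n$. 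If you insist on the Fourier framing, you could cite Amrein--Berthier to get $\|q\|^2_{L^2}\leq C\int_{|\k|\geq\delta'}|\hat q|^2\,d\k$ and combine it with $\inf_{|\k|\geq\delta'}\psi>0$, but that trades an elementary two-line physical-space estimate for a deep theorem; the route actually taken by the paper, following \cite[Proposition 3.2]{Zhang:17} (the reference you correctly anticipated), is the simpler way to finish.
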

\begin{proof}
The coercivity is proved in two steps. The first step is to show that $a(q,q)$ can bound the bilinear form $\tilde{a}(q,q)$ with isotropic $m(\fai)$, i.e., $a(q,q)\geq C\tilde{a}(q,q)$, where
\begin{equation}
  \tilde{a}(p,q)= \int\!\!\!\!\!\int_{\mathbb{R}^n\times\mathbb{R}^n} \frac{(p(\X)-p(\Y))(q(\X)-q(\Y))}{e^{\lambda|\X-\Y|}|\X-\Y|^{n+\beta}}d\X d\Y.
\end{equation}
In the second step, we prove that $\tilde{a}(q,q)$ can be bounded by the norm $\|q\|^2_{H^{\beta/2}(\mathbb{R}^n)}$.

In the first step, we prove it in the Fourier space like Lemma \ref{D_continuity}.
It suffices to prove that there exists a positive constant $C$ such that
\begin{equation}\label{fstepcompare2}
\Re[\mathscr{F}[-\Delta_m^{\beta/2,\lambda}q(\X)]\cdot \overline{\hat{q}(k)}]  \geq C\mathscr{F}[-\Delta^{\beta/2,\lambda}q(\X)] \cdot \overline{\hat{q}(k)} \qquad \forall \,\k\in\mathbb{R}^n,
\end{equation}
which can be guaranteed if  $m(\fai)$ is nondegenerate from Lemma \ref{ConditionForM}.
See \cite[(5.11)]{Ervin:07} for some specific expressions of $m(\fai)$, where the two dimensional case is discussed, but without tempering.
In the second step, we adopt the technique of \cite[Proposition 3.2]{Zhang:17} by taking a sufficiently big ball $B_\rho$ centering at the origin with radius $\rho$, such that $\Omega\subset B_\rho$. Denote $\delta>0$ as the distance between $\Omega$ and $\partial B_\rho$, $\delta=\underset{\X\in\Omega,\Y\in\partial B_\rho}{\inf}|\X-\Y|$. Then for $q\in \tilde{H}_0^{\beta/2}(\Omega)$,
\begin{equation}
  \begin{split}
    |q|^2_{H^{\beta/2}(B_\rho)}
    = &~ \int_{B_\rho}\int_{B_\rho} \frac{(q(\X)-q(\Y))^2}{|\X-\Y|^{n+\beta}}d\X d\Y \\
    \geq &~ \int_\Omega q^2(\X) \int_{B_\rho\backslash\Omega} \frac{1}{|\X-\Y|^{n+\beta}} d\Y d\X  \\
    \geq &~ (2\rho)^{-n-\beta}|B_\rho\backslash\Omega| \,\int_\Omega q^2(\X) d\X \\
    = &~ C \|q\|^2_{L^2(\Omega)}=C \|q\|^2_{L^2(\mathbb{R}^n)}
  \end{split}
\end{equation}
and
\begin{equation}
\begin{split}
    |q|^2_{H^{\beta/2}(\mathbb{R}^n)}
    = &~ |q|^2_{H^{\beta/2}(B_\rho)} + 2 \int_\Omega \int_{\mathbb{R}^n\backslash B_\rho} \frac{(q(\X))^2}{|\X-\Y|^{n+\beta}} d\Y d\X \\
    \leq & ~|q|^2_{H^{\beta/2}(B_\rho)} + 2 \int_\Omega (q(\X))^2 d\X \int_{\mathbb{R}^n\backslash B_\delta} |\Y|^{-n-\beta} d\Y  \\
    = & ~|q|^2_{H^{\beta/2}(B_\rho)} + \frac{2\omega_n \delta^{-\beta}}{\beta} \|q\|^2_{L^2(\Omega)}  \\
    \leq & ~C|q|^2_{H^{\beta/2}(B_\rho)}.
\end{split}
\end{equation}
Therefore,
\begin{equation}
  \begin{split}
    \tilde{a}(q,q)=& \int\!\!\!\!\!\int_{\mathbb{R}^n\times\mathbb{R}^n} \frac{(q(\X)-q(\Y))^2}{e^{\lambda|\X-\Y|}|\X-\Y|^{n+\beta}}d\X d\Y  \\
    \geq & \int\!\!\!\!\!\int_{B_\rho\times B_\rho} \frac{(q(\X)-q(\Y))^2}{e^{\lambda|\X-\Y|}|\X-\Y|^{n+\beta}}d\X d\Y  \\
    \geq & ~e^{-2\lambda\rho} |q|^2_{H^{\beta/2}(B_\rho)},  \\
    \geq & ~C \|q\|^2_{H^{\beta/2}(\mathbb{R}^n)}.
  \end{split}
\end{equation}
The proof is completed.
\end{proof}

\begin{theorem}[Existence and uniqueness of weak solutions]
Let $p_0\in L^2(\Omega)$, $f\in L^2(0,T;H^{-\beta/2}(\Omega))$ and $g\in L^2(0,T;H^{\beta/2}({\mathbb R}^n))\cap H^1(0,T;H^{-\beta/2}(\mathbb{R}^n))$. If the probability density function $m(\fai)$ is nondegenerate, there exists a unique weak solution of \eqref{IBFracLap1} in the sense of \eqref{IBweak1}.
\end{theorem}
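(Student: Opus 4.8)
The plan is to recast \eqref{IBweak1} as an abstract linear parabolic evolution equation posed in a Gelfand triple and then invoke the standard Galerkin (Lions) existence--uniqueness theory, whose two structural hypotheses---continuity and coercivity of $a(\cdot,\cdot)$---have already been supplied by Lemma \ref{D_continuity} and the preceding coercivity lemma. First I would fix the functional setting: put $V=\tilde H^{\beta/2}_0(\Omega)$, $H=L^2(\Omega)$ and $V'=H^{-\beta/2}(\Omega)$, which form a Gelfand triple $V\hookrightarrow H\hookrightarrow V'$ with dense and, by the compact fractional Sobolev embedding on the bounded set $\Omega$, compact inclusions. The substitution $p=\tilde p+g$ has already moved the inhomogeneous exterior data into the source, so it remains to verify that $q\mapsto l(q)=\int_\Omega(f+\Delta_m^{\beta/2,\lambda}g-\partial_t g)\,q\,d\X$ defines an element of $L^2(0,T;V')$: indeed $f\in L^2(0,T;H^{-\beta/2}(\Omega))$ and $\partial_t g\in L^2(0,T;H^{-\beta/2}(\mathbb{R}^n))$ by hypothesis, while $\Delta_m^{\beta/2,\lambda}g\in L^2(0,T;H^{-\beta/2})$ because the operator maps $H^{\beta/2}(\mathbb{R}^n)$ into $H^{-\beta/2}(\mathbb{R}^n)$ boundedly, this boundedness being precisely the content of the continuity estimate in Lemma \ref{D_continuity}.

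Next I would upgrade the coercivity estimate to the full norm. The coercivity lemma yields $a(q,q)\ge C|q|^2_{H^{\beta/2}(\mathbb{R}^n)}$, and its second step already establishes a fractional Poincar\'e inequality $|q|^2_{H^{\beta/2}(\mathbb{R}^n)}\ge C\|q\|^2_{L^2(\Omega)}$ valid for $q\in\tilde H^{\beta/2}_0(\Omega)$; together these give genuine coercivity $a(q,q)\ge\alpha\|q\|^2_{V}$ with $\alpha>0$, so that no G\aa rding lower-order correction is needed. With continuity and this coercivity in hand, the abstract theory applies verbatim: I would run the Galerkin scheme by projecting onto the span of the first $N$ elements of a basis of $V$, solve the resulting linear ODE system for the coefficients, and test the discrete equation against the approximate solution itself. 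Coercivity then controls $\tfrac12\frac{d}{dt}\|\tilde p_N\|_H^2+\alpha\|\tilde p_N\|_V^2$, and after integrating in time, applying Young's inequality to the $\langle l,\tilde p_N\rangle$ term and invoking Gronwall's inequality, I obtain uniform bounds for $\tilde p_N$ in $L^2(0,T;V)\cap L^\infty(0,T;H)$ together with a bound for $\partial_t\tilde p_N$ in $L^2(0,T;V')$. Weak and weak-$*$ compactness then yield a subsequence converging to a limit $\tilde p$, and the bilinearity together with the continuity of $a$ let me pass to the limit in the variational identity to conclude that $\tilde p$ solves \eqref{IBweak1}. The regularity $\tilde p\in L^2(0,T;V)\cap H^1(0,T;V')\hookrightarrow C([0,T];H)$ makes the initial condition $\tilde p(\cdot,0)=p_0-g(\cdot,0)$ meaningful and is recovered in the limit. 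Uniqueness is then immediate: the difference of two solutions satisfies the homogeneous problem, and testing with the difference and using coercivity and Gronwall forces it to vanish.

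I expect the genuinely hard part to be already behind us, namely the coercivity of $a$, which is exactly where the nondegeneracy hypothesis on $m(\fai)$ enters through Lemma \ref{ConditionForM}; without it the bilinear form degenerates along the directions left unresolved by the support of $m$ and no $H^{\beta/2}$-control is possible. Within the present theorem the only points demanding care are therefore the passage from seminorm to norm coercivity on $\tilde H^{\beta/2}_0(\Omega)$, handled by the Poincar\'e step, and the verification that the tempering factor $e^{-\lambda|\X-\Y|}$ together with the non-decay of admissible solutions at infinity do not spoil the mapping property $\Delta_m^{\beta/2,\lambda}\colon H^{\beta/2}\to H^{-\beta/2}$ used for the source term; both are controlled by the Fourier-symbol bound $(\lambda^2+|\k\cdot\fai|^2)^{\beta/2}\le 2^{\beta/2}(\lambda^\beta+|\k\cdot\fai|^\beta)$ already exploited in Lemma \ref{D_continuity}. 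The remainder is the standard Lions--Galerkin argument and involves no new ideas.
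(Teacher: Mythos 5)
Your proposal is correct and follows essentially the same route as the paper: the paper's own proof is a one-line appeal to the already-established continuity and coercivity of $a(\cdot,\cdot)$ and the continuity of the linear functional $l(q)$, followed by the standard Lions--Galerkin existence--uniqueness theory for parabolic problems, which is exactly the machinery you spell out in detail. Your expanded treatment (Gelfand triple, verification that $l\in L^2(0,T;V')$, energy estimates, Gronwall, uniqueness) is just the explicit content of the standard theorem the paper invokes implicitly.
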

\begin{proof}
  The continuity and coercivity of bilinear form $a(\tilde{p},q)$ have been obtained. Furthermore, $l(q)$ is a continuous linear functional. Then
  the original initial boundary value problem \eqref{IBFracLap1} has a unique solution.
\end{proof}

For the Neumann problem \eqref{IBFracLap2}, firstly we define the tempered fractional space
\begin{equation*}
H^{\beta/2,\lambda}(\mathbb{R}^n)= \left\{v\in L^2(\mathbb{R}^n):
|v|_{H^{\beta/2,\lambda}(\mathbb{R}^n)} <\infty \right\},
\end{equation*}
where the seminorm
\begin{equation}
|v|_{H^{\beta/2,\lambda}(\mathbb{R}^n)}= \left(\int_{\mathbb{R}^n}\int_{\mathbb{R}^n} \frac{(v(\X)-v(\Y))^2}{e^{\lambda|\X-\Y|}|\X-\Y|^{n+\beta}} d\X d\Y\right)^{1/2}
\end{equation}
and the norm
\begin{equation}
\|v\|_{H^{\beta/2,\lambda}(\mathbb{R}^n)}= \left( \|v\|_{L^2(\mathbb{R}^n)}^2+|v|^2_{H^{\beta/2,\lambda}(\mathbb{R}^n)} \right)^{1/2}.
\end{equation}

The main difference of Neumann problem with Dirichlet one is that essentially it is an unbounded problem. There are also some interesting properties for the operator $\Delta_m^{\beta/2,\lambda}$ defined in unbounded domain, e.g., $\Delta_m^{\beta/2,\lambda}1=0$ for constant 1,
which may produce some dedicated/complicated issues for the choice of function spaces, ways of proving the well-posedness, etc. For example, for the bilinear form $a(\cdot,\cdot)$ in \eqref{IBBilinear1},
\begin{equation}\label{Counterexample}
  |a(p,q)| \nleqslant  C\, |p|_{H^{\beta/2,\lambda}(\mathbb{R}^n)} \cdot |q|_{H^{\beta/2,\lambda}(\mathbb{R}^n)}.
\end{equation}
In fact, take $n=1$, $q(x)\equiv1$ and
\begin{equation*}
  p(x)=\left\{
  \begin{array}{cc}
  -1 & x<0  \\ 0 & x\geq0,
  \end{array}\right.
  \qquad
  m(x)=\left\{
  \begin{array}{cc}
  0 & x=-1  \\ 1 & x=1.
  \end{array}\right.
\end{equation*}
Then the right hand side of \eqref{Counterexample} equals to $0$ while the left hand side
\begin{equation*}
\begin{split}
   a(p,q)&=2\int_{-\infty}^{\infty}\int_{-\infty}^{\infty} \frac{p(x)-p(y)}{e^{\lambda|x-y|}|x-y|^{1+\beta}} m(\textrm{sgn}(x-y)) dx dy  \\
   &=2\int_{-\infty}^{\infty}\int_{y}^{\infty} \frac{p(x)-p(y)}{e^{\lambda|x-y|}|x-y|^{1+\beta}} dx dy \\
   &=2\int_{-\infty}^{0}\int_{0}^{\infty} \frac{1}{e^{\lambda|x-y|}|x-y|^{1+\beta}} dx dy >0.
\end{split}
\end{equation*}

In the following, we just focus on the case that the probability density function $m(\Y)$ is symmetric.
We define the function space, containing the functions that may not vanish at the infinity,
\begin{equation}
\mathbb{V}=\{ p\in L^2(\Omega):|p|_{H_m^{\beta/2,\lambda}(\mathbb{R}^n)}<\infty\},
\end{equation}
furnished with the norm
\begin{equation}\label{Vnorm}
\begin{split}
\|p\|_{\mathbb{V}}= \left(\|p\|_{L^2(\Omega)}^2+|p|_{H_m^{\beta/2,\lambda}(\mathbb{R}^n)}^2\right)^{1/2},  \qquad\qquad\qquad\qquad  \\
|p|_{H_m^{\beta/2,\lambda}(\mathbb{R}^n)}=\left(\int_{\mathbb{R}^n}\int_{\mathbb{R}^n} \frac{(p(\X)-p(\Y))^2}{e^{\lambda|\X-\Y|}|\X-\Y|^{n+\beta}}
m(\X-\Y)d\X d\Y\right)^{1/2}.
\end{split}
\end{equation}

\begin{proposition}
  $\mathbb{V}$ is a Hilbert space with the norm defined in \eqref{Vnorm}.
\end{proposition}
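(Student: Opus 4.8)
The plan is to realize $\mathbb{V}$ as an inner product space and then prove completeness by identifying the seminorm with an $L^2$-norm on a suitable product measure space. First I would record the inner product: for $p,q\in\mathbb{V}$ set
\[
(p,q)_{\mathbb{V}}=\int_\Omega p\,q\,d\X+\int_{\mathbb{R}^n}\!\!\int_{\mathbb{R}^n}\frac{(p(\X)-p(\Y))(q(\X)-q(\Y))}{e^{\lambda|\X-\Y|}|\X-\Y|^{n+\beta}}\,m(\X-\Y)\,d\X d\Y.
\]
Bilinearity is immediate, and symmetry of the second term uses the assumed symmetry $m(\X-\Y)=m(\Y-\X)$, so the kernel is invariant under $\X\leftrightarrow\Y$. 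Positive-definiteness is supplied by the $L^2(\Omega)$ term: $\|p\|_{\mathbb{V}}=0$ forces $\|p\|_{L^2(\Omega)}=0$, i.e.\ $p=0$ in $\mathbb{V}$. Hence $(\cdot,\cdot)_{\mathbb{V}}$ is an inner product inducing the norm \eqref{Vnorm}, and the only substantive point is completeness.

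Second, I would set up the completeness argument abstractly. Introduce the measure $d\mu(\X,\Y)=\frac{m(\X-\Y)}{e^{\lambda|\X-\Y|}|\X-\Y|^{n+\beta}}\,d\X d\Y$ on $\mathbb{R}^n\times\mathbb{R}^n$ and the linear difference operator $(Tp)(\X,\Y)=p(\X)-p(\Y)$. Then $|p|_{H_m^{\beta/2,\lambda}(\mathbb{R}^n)}=\|Tp\|_{L^2(d\mu)}$, so the map $J:p\mapsto(p|_\Omega,\,Tp)$ is a linear isometry of $\mathbb{V}$ into the product Hilbert space $L^2(\Omega)\times L^2(d\mu)$, and completeness of $\mathbb{V}$ is equivalent to $J(\mathbb{V})$ being closed. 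Given a Cauchy sequence $\{p_k\}$ in $\mathbb{V}$, its images are Cauchy, so by completeness of the constituent $L^2$ spaces there exist $u\in L^2(\Omega)$ and $w\in L^2(d\mu)$ with $p_k|_\Omega\to u$ in $L^2(\Omega)$ and $Tp_k\to w$ in $L^2(d\mu)$.

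Third, the crux is to produce $p\in\mathbb{V}$ with $p|_\Omega=u$ and $Tp=w$, i.e.\ to show the abstract limit $w$ is a genuine coboundary. I would pass to a subsequence with $p_k\to u$ a.e.\ on $\Omega$ and $Tp_k\to w$ $\mu$-a.e.; applying Fubini to $\|w\|_{L^2(d\mu)}^2$, for a.e.\ $\Y_0$ one also has $Tp_k(\cdot,\Y_0)\to w(\cdot,\Y_0)$ in $L^2\big(\tfrac{m(\X-\Y_0)}{e^{\lambda|\X-\Y_0|}|\X-\Y_0|^{n+\beta}}\,d\X\big)$. Fixing such a $\Y_0\in\Omega$ at which moreover $p_k(\Y_0)\to u(\Y_0)$, and setting $p(\X):=u(\Y_0)+w(\X,\Y_0)$, one recovers the limit from the exact identity $p_k(\X)=p_k(\Y_0)+(Tp_k)(\X,\Y_0)$; one then checks $p|_\Omega=u$ and $Tp=w$ $\mu$-a.e., whence $p\in\mathbb{V}$ and $\|p_k-p\|_{\mathbb{V}}\to0$, closing $J(\mathbb{V})$.

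The main obstacle is precisely this reconstruction: $w$ is a priori only an element of $L^2(d\mu)$, and one must show it has the difference form $p(\X)-p(\Y)$ while simultaneously identifying $p$ off $\Omega$. The difficulty sharpens when $m$ is degenerate, since then $\mu$ lives only on pairs with $\X-\Y$ in the directional support of $m$, so a single reference point $\Y_0$ reconstructs $p$ only on the set of $\X$ reachable by one $m$-admissible jump. I would resolve this by reconstructing $p$ componentwise on the equivalence classes of the relation $\X\sim\Y\iff m(\X-\Y)\neq0$, fixing the additive constant from $u$ on the classes that meet $\Omega$ and arbitrarily on the rest; because the seminorm and $Tp$ only involve pairs in $\mathrm{supp}\,\mu$, the values assigned on unreachable points are immaterial, and consistency of the reconstructed differences across $\mu$-a.e.\ pairs follows from each $Tp_k$ being an exact coboundary together with the a.e.\ convergence.
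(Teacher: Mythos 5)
Your completeness argument is, modulo repackaging, the same as the paper's (which follows Dipierro--Ros-Oton--Valdinoci): embed $\mathbb{V}$ isometrically into $L^2(\Omega)\times L^2(d\mu)$, extract a.e.\ limits of a Cauchy sequence, reconstruct the limit function off $\Omega$ from a reference point via the exact identity $p_k(\X)=p_k(\Y_0)+(Tp_k)(\X,\Y_0)$, and close with Fatou. Your chain/equivalence-class refinement is in fact slightly more careful than the paper's own reconstruction, which divides by $m^{1/2}(\X_0-\Y)$ and hence tacitly needs $m(\X_0-\Y)\neq 0$ for the $\Y$ in question.

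The genuine gap is in positive definiteness. You claim it ``is supplied by the $L^2(\Omega)$ term,'' but elements of $\mathbb{V}$ are functions on all of $\mathbb{R}^n$ (the seminorm is a double integral over $\mathbb{R}^n\times\mathbb{R}^n$), and $\|p\|_{L^2(\Omega)}=0$ only gives $p=0$ a.e.\ in $\Omega$; it says nothing about $p$ on $\mathbb{R}^n\setminus\Omega$. To conclude $p=0$ a.e.\ in $\mathbb{R}^n$ one must also use $|p|_{H_m^{\beta/2,\lambda}(\mathbb{R}^n)}=0$, which gives $(p(\X)-p(\Y))^2\,m(\X-\Y)=0$ a.e., and then the nondegeneracy of $m$: the paper picks $n$ linearly independent directions $\mathbf{r}_i$ with $m(\mathbf{r}_i)\neq0$, deduces $\mathbf{r}_i\cdot\nabla p=0$ for every $i$, hence $p$ is constant a.e., and the constant is $0$ because $p=0$ on $\Omega$. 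Without nondegeneracy this step --- and the proposition itself --- fails: in $\mathbb{R}^2$ with $m$ supported near $\pm e_1$, any function of $x_2$ alone that vanishes on the $x_2$-projection of $\Omega$ has $\|p\|_{\mathbb{V}}=0$ yet $p\neq0$. Your own final paragraph exposes the inconsistency: if the constants on ``unreachable'' classes were truly immaterial, then $\|\cdot\|_{\mathbb{V}}$ would be only a seminorm and $\mathbb{V}$ would not be Hausdorff, hence not a Hilbert space. In the case the proposition actually covers (symmetric, nondegenerate $m$) there are no such classes: since the support of $m$ spans $\mathbb{R}^n$ and is symmetric, finitely many admissible jumps connect a.e.\ pair of points, so your reconstruction determines $p$ a.e.\ uniquely; in the degenerate case the statement is simply false, so no amount of care in the completeness step can rescue it.
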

\begin{proof}
We first verify that the norm in \eqref{Vnorm} is well-defined. Let $\|p\|_{\mathbb{V}}=0$. It can be easily obtained that $p=0$ a.e. in $\Omega$ from $\|p\|_{L^2(\Omega)}=0$. Then from $|p|_{H_m^{\beta/2,\lambda}(\mathbb{R}^n)}=0$, one gets that  $(p(\X)-p(\Y))^2m(\X-\Y)=0$ a.e. in $\mathbb{R}^n$. Since $m$ is nondegenerate, there exist $n$ linearly independent nonzero vectors $\mathbf{r}_i,i=1,\cdots,n,$ satisfying $m(\mathbf{r}_i)\neq0$. Thus $\mathbf{r}_i\cdot\nabla p=0$, which implies that $\nabla p$ is orthogonal to $\mathbf{r}_i$ for all $i=1,\cdots,n$. Therefore, $\nabla p=\mathbf{0}$, which leads to $p=0$ a.e. in $\mathbb{R}^n$, by combining with $p=0$ a.e. in $\Omega$.


Then we prove that $\mathbb{V}$ is complete by imitating the proof of \cite[Proposition 3.1]{Dipierro:14}. Take a Cauchy sequence $p_k$ with respect to the norm in \eqref{Vnorm}. In particular, $p_k$ is a Cauchy sequence in $L^2(\Omega)$ and therefore, up to a subsequence, we suppose that $p_k$ converges to some $p$ in $L^2(\Omega)$ and a.e. in $\Omega$.
On the other hand, for any $(\X,\Y)\in \mathbb{R}^{2n}$, define
\begin{equation}\label{EU}
  E_{p_k}(\X,\Y) := (p_k(\X)-p_k(\Y)) \frac{m^{1/2}(\X-\Y)}{e^{\lambda|\X-\Y|/2}|\X-\Y|^{(n+\beta)/2}}.
\end{equation}
Accordingly, since $p_k$ is a Cauchy sequence in $\mathbb{V}$ , for any $\varepsilon>0$, there exists $N_\varepsilon \in \mathbb{N}$ such that if $k,k' \geq N_\varepsilon$ then
$$ \varepsilon^2 \geq \int_{\mathbb{R}^{2n}} |(p_k-p_{k'})(\X)-(p_k -p_{k'})(\Y)|^2 \frac{m(\X-\Y)}{e^{\lambda|\X-\Y|}|\X-\Y|^{n+\beta}} d\X d\Y = \|E_{p_k}- E_{p_k'}\|^2_{L^2(\mathbb{R}^{2n})},$$
which means that $E_{p_k}$ is a Cauchy sequence in $L^2(\mathbb{R}^{2n})$. Up to a subsequence, we assume
that $E_{p_k}$ converges to some $E$ in $L^2(\mathbb{R}^{2n})$ and a.e. in $\mathbb{R}^{2n}$.

Fixing $\X_0\in \Omega$, there exists $\lim_{k\rightarrow\infty}  p_k(\X_0) = p(\X_0)$; then for any given $\Y\in\mathbb{R}^n\backslash\Omega$, we have that
$$
\lim_{k\rightarrow\infty} E_{p_k}(\X_0,\Y) = E(\X_0,\Y).$$
Noticing that
\begin{equation*}
  E_{p_k}(\X_0,\Y):=(p_k(\X_0)-p_k(\Y)) \frac{m^{1/2}(\X_0-\Y)}{e^{\lambda|\X_0-\Y|/2}|\X_0-\Y|^{(n+\beta)/2}},
\end{equation*}
there exists
\begin{equation*}
  \begin{split}
    \lim_{k\rightarrow\infty} p_k(\Y)
    &=\lim_{k\rightarrow\infty} \left(p_k(\X_0)- \frac{e^{\lambda|\X_0-\Y|/2}|\X_0-\Y|^{(n+\beta)/2}}{m^{1/2}(\X_0-\Y)} E_{p_k}(\X_0,\Y) \right)  \\
    &= p(\X_0)- \frac{e^{\lambda|\X_0-\Y|/2}|\X_0-\Y|^{(n+\beta)/2}}{m^{1/2}(\X_0-\Y)} E(\X_0,\Y)
  \end{split}
\end{equation*}
for a.e. $\Y\in \mathbb{R}^n\backslash\Omega$.
This means that $p_k$ converges to some $p$ a.e. in $\mathbb{R}^n$. So, using that $p_k$ is a Cauchy sequence in $\mathbb{V}$, fixed any $\varepsilon>0$, there exists $N_\varepsilon\in\mathbb{N}$ such that, for any $k'\geq N_\varepsilon$,
\begin{equation*}
  \begin{split}
    \varepsilon^2
    &\geq \underset{k\rightarrow\infty}{\liminf} \|p_k-p_{k'}\|^2_\mathbb{V}  \\
    &= \underset{k\rightarrow\infty}{\liminf} \int_\Omega (p_k-p_{k'})^2d\X  \\
        &~~~ +\underset{k\rightarrow\infty}{\liminf}\int_{\mathbb{R}^n}\!\!\!\int_{\mathbb{R}^n}|(p_k-p_{k'})(\X)-(p_k-p_{k'})(\Y)|^2 \frac{m(\X-\Y)}{e^{\lambda|\X-\Y|}|\X-\Y|^{n+\beta}} d\X d\Y  \\
    &\geq \int_\Omega (p-p_k')^2d\X+
         \int_{\mathbb{R}^n}\!\!\!\int_{\mathbb{R}^n}((p-p_{k'})(\X)-(p-p_{k'})(\Y))^2 \frac{m(\X-\Y)}{e^{\lambda|\X-\Y|}|\X-\Y|^{n+\beta}} d\X d\Y \\
    &= \|p-p_{k'}\|^2_{\mathbb{V}},
  \end{split}
\end{equation*}
where the Fatous Lemma is used. This says that $p_k'$ converges to $p$ in $\mathbb{V}$, showing that $\mathbb{V}$ is complete.
\end{proof}

Then the weak formulation of \eqref{IBFracLap2} is to find $p\in L^2(0,T;\mathbb{V})\cap H^1(0,T;\mathbb{V}')$ satisfying
\begin{equation}\label{IBweak2}
 \int_{0}^{T}\!\!\!\int_\Omega \frac{\partial p}{\partial t}\,q d\X dt  +  \frac{1}{2|\Gamma(-\beta)|} \int_{0}^{T} a(p,q) dt
= \int_{0}^{T}\!\!\!\int_{\Omega} f\,q d\X dt - \int_{0}^{T}\!\!\!\int_{{\mathbb R}^n\backslash\Omega}gq \,d\X dt
\end{equation}
for all $q\in L^2(0,T;\mathbb{V})$, where
\begin{equation}
  a(p,q)=\int\!\!\!\!\!\int_{\mathbb{R}^n\times\mathbb{R}^n} \frac{(p(\X)-p(\Y))(q(\X)-q(\Y))}{e^{\lambda|\X-\Y|}|\X-\Y|^{n+\beta}} m(\X-\Y)d\X d\Y.
\end{equation}
Similar to \cite[Theorem 4.2]{Deng:17}, we have
\begin{theorem}[Existence and uniqueness of weak solutions]
Let $p_0\in L^2(\Omega)$, $f\in L^2(0,T;L^2(\Omega))$ and $g\in L^2(0,T;\mathbb{V}')$. If $m(\Y)$ is nondegenerate, then there exists a unique weak solution of \eqref{IBFracLap2} in the sense of \eqref{IBweak2}.
\end{theorem}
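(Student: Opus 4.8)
The plan is to recast the weak formulation \eqref{IBweak2} as an abstract linear parabolic Cauchy problem on the Gelfand triple $\mathbb{V}\hookrightarrow L^2(\Omega)\hookrightarrow\mathbb{V}'$ and to invoke the standard Lions existence--uniqueness theorem for evolution equations (equivalently, a Galerkin approximation together with energy estimates and a passage to the limit). The hypotheses to be checked are: $\mathbb{V}$ a separable Hilbert space, continuously and densely embedded in the pivot space $L^2(\Omega)$; the bilinear form $a(\cdot,\cdot)$ continuous on $\mathbb{V}\times\mathbb{V}$ and satisfying a G\aa rding inequality; and the data lying in $L^2(0,T;\mathbb{V}')$ and $L^2(\Omega)$. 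The preceding proposition already furnishes that $\mathbb{V}$ is a Hilbert space---and it is precisely the nondegeneracy of $m(\Y)$ that makes $\|\cdot\|_{\mathbb{V}}$ a genuine norm---while $\|p\|_{L^2(\Omega)}\leq\|p\|_{\mathbb{V}}$ gives the continuous embedding and $C_0^\infty(\mathbb{R}^n)\subset\mathbb{V}$ (dense in $L^2(\Omega)$ upon restriction) gives density. Thus the triple is well-defined, and it remains to verify the properties of $a$ and of the right-hand side.

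Continuity and coercivity are both immediate once the symmetry of $m(\Y)$ is exploited, since then $a$ coincides with the inner product generating the seminorm,
\begin{equation*}
 a(q,q)=\int\!\!\!\!\!\int_{\mathbb{R}^n\times\mathbb{R}^n}\frac{(q(\X)-q(\Y))^2}{e^{\lambda|\X-\Y|}|\X-\Y|^{n+\beta}}m(\X-\Y)\,d\X d\Y=|q|^2_{H_m^{\beta/2,\lambda}(\mathbb{R}^n)}.
\end{equation*}
The Cauchy--Schwarz inequality then yields $|a(p,q)|\leq|p|_{H_m^{\beta/2,\lambda}(\mathbb{R}^n)}\,|q|_{H_m^{\beta/2,\lambda}(\mathbb{R}^n)}\leq\|p\|_{\mathbb{V}}\|q\|_{\mathbb{V}}$, i.e.\ continuity, and adding back the $L^2(\Omega)$-contribution gives
\begin{equation*}
 a(q,q)+\|q\|^2_{L^2(\Omega)}=\|q\|^2_{\mathbb{V}},
\end{equation*}
which is G\aa rding's inequality with constants equal to $1$. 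I emphasize that, in contrast with the Dirichlet case, strict coercivity is \emph{not} available here: on the unbounded configuration space the functions need not vanish at infinity (indeed $\Delta_m^{\beta/2,\lambda}1=0$), so no Poincar\'e inequality can absorb the $\|q\|^2_{L^2(\Omega)}$ term. The G\aa rding form nevertheless suffices; equivalently, the substitution $p(\X,t)=e^{t}\tilde p(\X,t)$ shifts the spectrum and renders the transformed bilinear form $a(\cdot,\cdot)+(\cdot,\cdot)_{L^2(\Omega)}$ strictly coercive, to which a Lax--Milgram-type argument in time applies.

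For the right-hand side, the source term $\int_\Omega fq\,d\X$ is controlled by $f\in L^2(0,T;L^2(\Omega))$ together with $L^2(\Omega)\hookrightarrow\mathbb{V}'$, and the generalized Neumann term $\int_{\mathbb{R}^n\backslash\Omega}gq\,d\X$ is interpreted as the duality pairing $\langle g,q\rangle_{\mathbb{V}',\mathbb{V}}$, which is bounded since $g\in L^2(0,T;\mathbb{V}')$. Hence the functional on the right defines an element of $L^2(0,T;\mathbb{V}')$. With all hypotheses in place, the abstract parabolic theorem produces a unique $p\in L^2(0,T;\mathbb{V})\cap H^1(0,T;\mathbb{V}')\hookrightarrow C([0,T];L^2(\Omega))$ solving \eqref{IBweak2} and attaining $p_0\in L^2(\Omega)$ in the $C([0,T];L^2(\Omega))$ sense.

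I expect the genuine obstacle to lie not in the algebra of $a(\cdot,\cdot)$---which is essentially trivial for symmetric $m$---but in the functional-analytic setup of the \emph{unbounded} problem: making precise the Gelfand triple built on the nonstandard space $\mathbb{V}$ (whose members are defined on all of $\mathbb{R}^n$ but normed in $L^2$ only over $\Omega$), its density in $L^2(\Omega)$, and the correct distributional meaning of the exterior integral as the $\mathbb{V}'$--$\mathbb{V}$ pairing. Because strict coercivity fails, one must invoke the G\aa rding version of the parabolic existence theory (or carry out the exponential rescaling above) rather than a naive application of Lax--Milgram.
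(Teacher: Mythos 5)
Your structural observations are sound and in fact coincide with the paper's: for symmetric $m$ the form satisfies $a(q,q)=|q|^2_{H_m^{\beta/2,\lambda}(\mathbb{R}^n)}$, continuity follows from Cauchy--Schwarz, the exterior datum is read as a $\mathbb{V}'$--$\mathbb{V}$ pairing after extending $g$ by zero into $\Omega$, and nondegeneracy of $m$ enters only through the Proposition that makes $\|\cdot\|_{\mathbb{V}}$ a genuine norm. The gap is in your existence mechanism. The map $\mathbb{V}\to L^2(\Omega)$ is restriction to $\Omega$, and it is \emph{not injective}: any nonzero smooth function compactly supported in $\mathbb{R}^n\backslash\overline{\Omega}$ lies in $\mathbb{V}$ (its seminorm in \eqref{Vnorm} is finite) and restricts to zero. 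Hence $\mathbb{V}$ cannot be identified with a subspace of $L^2(\Omega)$, the chain $\mathbb{V}\subset L^2(\Omega)\cong L^2(\Omega)'\subset\mathbb{V}'$ breaks down, and there is no Gelfand triple: your claim ``thus the triple is well-defined'' is false. What \eqref{IBweak2} really is, abstractly, is a \emph{degenerate} (implicit) evolution equation $\frac{d}{dt}(\mathcal{B}p)+\mathcal{A}p=\ell$, where $\langle\mathcal{B}p,q\rangle=\int_\Omega pq\,d\X$ is symmetric and positive semidefinite on $\mathbb{V}$ but has a large kernel; the equation is parabolic inside $\Omega$ and purely stationary outside. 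The standard Lions theorem, the Lions--Magenes embedding into $C([0,T];L^2(\Omega))$, and a naive Galerkin scheme (whose mass matrix $(w_i,w_j)_{L^2(\Omega)}$ can be singular, turning the ODE system into a differential-algebraic one) all fail as stated. Your diagnosis that the obstacle is G\aa rding versus strict coercivity misses the point: the exponential rescaling $p=e^t\tilde p$ repairs coercivity of the spatial form, but it does nothing about the degeneracy of the time-derivative term, which is the actual obstruction. You sense the difficulty in your closing paragraph, but you assert ``all hypotheses in place'' rather than resolving it.

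The paper's proof takes a different and, for this problem, safer route that sidesteps the degeneracy entirely: Rothe's method. It discretizes in time with step $\tau$, and at each step solves \eqref{TD-PDE} by Lax--Milgram on $\mathbb{V}$; the relevant bilinear form there is $\frac{1}{\tau}(p,q)_{L^2(\Omega)}+\frac{1}{2|\Gamma(-\beta)|}a(p,q)$, which is bounded and \emph{genuinely} coercive, since it dominates $\min\{\tau^{-1},(2|\Gamma(-\beta)|)^{-1}\}\,\|q\|^2_{\mathbb{V}}$ --- the mass term supplies exactly the $L^2(\Omega)$ part of the $\mathbb{V}$-norm that $a$ alone cannot control. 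The exterior term is bounded exactly as you propose, $\left|\int_{\mathbb{R}^n\backslash\Omega}g_kq\,d\X\right|\leq\|g_k\|_{\mathbb{V}'}\|q\|_{\mathbb{V}}$, and the limit $\tau\to0$ is taken following \cite[Theorem 4.2]{Deng:17}. If you want to keep an abstract, discretization-free proof, you must replace the standard parabolic theorem by the theory of degenerate evolution equations (results for $\frac{d}{dt}\mathcal{B}u+\mathcal{A}u=f$ with $\mathcal{B}$ symmetric monotone, e.g.\ in Showalter's monotone-operator framework, or a Lions-projection-lemma argument), in which the initial condition is attained only in the form $p(0)|_\Omega=p_0$ and uniqueness is recovered by the energy identity $\frac12\|w(t)\|^2_{L^2(\Omega)}+\int_0^t a(w,w)\,ds=0$ together with nondegeneracy of $m$.
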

\begin{proof}
Let $t_k=k\tau$, $k=0,1,\dots,N$, be a partition of the time interval $[0,T]$, with step size $\tau=T/N$, and define
\begin{equation*}
f_k(\X):=\frac{1}{\tau}\int_{t_{k-1}}^{t_k} f(\X,t)dt,  \quad
g_k(\X):=\frac{1}{\tau}\int_{t_{k-1}}^{t_k} g(\X,t)d t ,\quad k=1,\dots,N.
\end{equation*}
Then consider the time discrete problem: for a given $p_{k-1}\in\mathbb{V}$, find $p_k\in\mathbb{V}$ such that
\begin{align}\label{TD-PDE}
& \frac{1}{\tau} \int_\Omega p_k(\X)q(\X)d\X+ \frac{1}{2|\Gamma(-\beta)|}\, a(p_k(\X),q(\X)) \nonumber\\
&=  \frac{1}{\tau} \int_\Omega p_{k-1}(\X)q(\X)d\X + \int_{\Omega} f_k({\bf X})q(\X)d\X -\int_{{\mathbb R}^n\backslash\Omega}g_k(\X)q(\X)d\X
\quad\forall\, q\in \mathbb{V} .
\end{align}
From the definition of $\mathbb{V}$ in \eqref{Vnorm}, the continuity and coercivity of $a(p,q)$ of the left hand side of \eqref{TD-PDE} on $\mathbb{V}$ is evident.
For the last term on the right hand side, we define $g(\X)=0,\X\in\Omega$ for supplementary. Then $g_k(\X)=0,\X\in\Omega$, and
\begin{align*}
  \left|\int_{{\mathbb R}^n\backslash\Omega}g_k(\X)q(\X)d\X \right|
  =\left| \int_{{\mathbb R}^n}g_k(\X)q(\X)d\X  \right|
  \leq \|g_k(\X)\|_{\mathbb{V}'} \|q(\X)\|_{\mathbb{V}}.
\end{align*}

Thus, the right hand side of \eqref{TD-PDE} satisfies
\begin{align*}
    \textrm{RHS}\leq &~C\|p_{k-1}\|_{L^2(\Omega)}\cdot\|q\|_{L^2(\Omega)}+\|f_k\|_{L^2(\Omega)}\cdot\|q\|_{L^2(\Omega)}
                       +\|g_k\|_{\mathbb{V}'} \|q\|_{\mathbb{V}} \\
    \leq &~C\Big(\|p_{k-1}\|_{L^2(\Omega)}+\|f_k\|_{L^2(\Omega)}+\|g_k\|_{\mathbb{V}'}\Big)\cdot \|q\|_{\mathbb{V}},
\end{align*}
which implies that the right hand side is a continuous linear functional on $\mathbb{V}$. Therefore, by the Lax-Milgram Lemma, there exists a unique solution $p_k\in\mathbb{V}$ for \eqref{TD-PDE}. Then using the technique in \cite[Theorem 4.2]{Deng:17}, there exists a unique solution $p$ satisfying \eqref{IBweak2}.
\end{proof}

\section{Conclusion}
This is a companion paper with the latest one  \cite{Deng:17}. The main generalizations come from three aspects: 1. the diffusion operators characterizing (normal or anomalous) diffusion without scaling limit are presented; 2. very general anisotropic diffusion operators describing nonhomogeneous  phenomena are proposed; 3. the tempered anisotropic diffusion operators are introduced by two different ways with different motivations, and they are proved to be equivalent; 4. the well-posedness and regularity of the anisotropic diffusion equations are discussed; 5. the models for the anisotropic anomalous diffusion with multiple internal states are built, including the Fokker-Planck and Feymann-Kac equations, respectively, governing the PDF of positions of particles and the PDF of the functional of the particles' trajectories. More wide applications and numerical methods for the newly built various models will be detailedly discussed in the near future.


\section*{Acknowledgements}
We thank Mark M. Meerschaert for fruitful discussions, especially another motivation of defining tempered fractional operators (See Eq. (\ref{anisoLapTempFT})).

\appendix
\section{Proof of Theorem \ref{theo1}}

\begin{proof}
  We mainly prove the equivalence of the anisotropic tempered fractional Laplacian in \eqref{anisoLapTempCase2} of Case II to the alternative definition \eqref{anisoLapTempFT}. The equivalence of the anisotropic fractional Laplacian of Case I and definition \eqref{anisoLapFT} can be obtained similarly. Taking Fourier transform of the right hand side of \eqref{anisoLapTempCase2} leads to
  \begin{equation*}
  \begin{split}
     &\mathscr{F}\Big[\Delta_m^{\beta/2,\lambda} p(\X,t)\Big](\k)  \\
     &= \frac{1}{|\Gamma(-\beta)|} \int_{\mathbb{R}^n}\frac{e^{i\k\cdot\Y}-1-i\k\cdot\Y}{e^{\lambda|\Y|}|\Y|^{n+\beta}}m(\Y)d\Y \cdot\hat{p}(\k,t)
     \\
     &
    ~~~   - \frac{1}{|\Gamma(-\beta)|} \Gamma(1-\beta)\lambda^{\beta-1} (-i\k\cdot\mathbf{b})\hat{p}(\k,t) \\
     &= \frac{1}{|\Gamma(-\beta)|} \Bigg[\int_{\mathbb{R}^n}\frac{\cos(\k\cdot\Y)-1}{e^{\lambda|\Y|}|\Y|^{n+\beta}}m(\Y)d\Y
           +i\int_{\mathbb{R}^n}\frac{\sin(\k\cdot\Y)-\k\cdot\Y}{e^{\lambda|\Y|}|\Y|^{n+\beta}}m(\Y)d\Y \Bigg]  \cdot\hat{p}(\k,t) \\
      &~~~+ \frac{1}{|\Gamma(-\beta)|} \Gamma(1-\beta)\lambda^{\beta-1} \int_{|\fai|=1} (i\k\cdot\fai) m(\fai)d\fai \cdot\hat{p}(\k,t).
  \end{split}
  \end{equation*}
  Since $\beta\in(1,2)$ in this case, by polar coordinate transformation and integration by parts, we have,
  \begin{equation*}
    \begin{split}
      \int_{\mathbb{R}^n}&\frac{1-\cos(\k\cdot\Y)}{e^{\lambda|\Y|}|\Y|^{n+\beta}}m(\Y)d\Y   \\
      =&\int_0^\infty\int_{|\fai|=1} r^{-1-\beta}e^{-\lambda r}(1-\cos(r\k\cdot\fai))m(\fai)d\fai dr \\
      =&\frac{\lambda^2}{(-\beta)(1-\beta)}\int_0^\infty r^{1-\beta}e^{-\lambda r} \int_{|\fai|=1}(1-\cos(r\k\cdot\fai))m(\fai)d\fai dr \\
      &-\frac{2\lambda}{(-\beta)(1-\beta)}\int_0^\infty r^{1-\beta}e^{-\lambda r} \int_{|\fai|=1}(\k\cdot\fai) \sin(r\k\cdot\fai) m(\fai)d\fai dr \\
      &+\frac{1}{(-\beta)(1-\beta)}\int_0^\infty r^{1-\beta}e^{-\lambda r} \int_{|\fai|=1} (\k\cdot\fai)^2 \cos(r\k\cdot\fai) m(\fai)d\fai dr \\
      =& \int_{|\fai|=1} (I_1+I_2+I_3) ~m(\fai)d\fai.
    \end{split}
  \end{equation*}
  Then using the formulae \cite[Eq. (3.944(5-6))]{Gradshteyn:80} and taking $\eta=\arctan\frac{(\k\cdot\fai)}{\lambda}$, we get
  \begin{equation*}
  \begin{split}
    &I_1=\Gamma(-\beta)\lambda^\beta-\Gamma(-\beta)(\lambda^2+(\k\cdot\fai)^2)^{\frac{\beta}{2}-1} \cdot \lambda^2\cos((2-\beta)\eta),\\
    &I_2=-2\Gamma(-\beta)(\lambda^2+(\k\cdot\fai)^2)^{\frac{\beta}{2}-1} \cdot (\k\cdot\fai)\lambda\sin((2-\beta)\eta),\\
    &I_3=\Gamma(-\beta)(\lambda^2+(\k\cdot\fai)^2)^{\frac{\beta}{2}-1} \cdot (\k\cdot\fai)^2\cos((2-\beta)\eta),
  \end{split}
  \end{equation*}
which results in
  \begin{equation*}
      I_1+I_2+I_3=\Gamma(-\beta) \Big(\lambda^\beta-(\lambda^2+(\k\cdot\fai)^2)^{\frac{\beta}{2}} \cos(\beta\eta) \Big).
  \end{equation*}
Then
  \begin{equation}\label{equiv1}
    \int_{\mathbb{R}^n}\frac{\cos(\k\cdot\Y)-1}{e^{\lambda|\Y|}|\Y|^{n+\beta}}m(\Y)d\Y
    = -\Gamma(-\beta) \int_{|\fai|=1} \Big(\lambda^\beta-(\lambda^2+(\k\cdot\fai)^2)^{\frac{\beta}{2}} \cos(\beta\eta)\Big) ~m(\fai)d\fai.
  \end{equation}
  Similarly,
  \begin{equation}\label{equiv2}
    \begin{split}
        \int_{\mathbb{R}^n}&\frac{\sin(\k\cdot\Y)-\k\cdot\Y}{e^{\lambda|\Y|}|\Y|^{n+\beta}}m(\Y)d\Y    \\
      =&\int_0^\infty\int_{|\fai|=1} r^{-1-\beta}e^{-\lambda r}(\sin(r\k\cdot\fai)-r\k\cdot\fai)m(\fai)d\fai dr \\
      =&\frac{\lambda^2}{(-\beta)(1-\beta)}\int_0^\infty r^{1-\beta}e^{-\lambda r} \int_{|\fai|=1}(\sin(r\k\cdot\fai)-r\k\cdot\fai)m(\fai)d\fai dr \\
      &-\frac{2\lambda}{(-\beta)(1-\beta)}\int_0^\infty r^{1-\beta}e^{-\lambda r} \int_{|\fai|=1}(\k\cdot\fai) (\cos(r\k\cdot\fai)-1) m(\fai)d\fai dr \\
      &-\frac{1}{(-\beta)(1-\beta)}\int_0^\infty r^{1-\beta}e^{-\lambda r} \int_{|\fai|=1} (\k\cdot\fai)^2 \sin(r\k\cdot\fai) m(\fai)d\fai dr \\
      =& -\Gamma(-\beta) \int_{|\fai|=1} (\lambda^2+(\k\cdot\fai)^2)^\frac{\beta}{2} \sin(\beta\eta) ~m(\fai)d\fai \\
       & - \Gamma(1-\beta)\lambda^{\beta-1} \int_{|\fai|=1} (\k\cdot\fai)m(\fai)d\fai .
    \end{split}
  \end{equation}
  Combining \eqref{equiv1} and \eqref{equiv2} leads to the  anisotropic tempered fractional Laplacian in Fourier space
  \begin{equation*}
  \begin{split}
  \mathscr{F}[\Delta_m^{\beta/2,\lambda}p(\X,t)]
  &= (-1)^{\lceil\beta\rceil} \int_{|\fai|=1} \Big( (\lambda^2+(\k\cdot\fai)^2)^{\frac{\beta}{2}} e^{-i\beta\eta} - \lambda^\beta \Big)~m(\fai)d\fai\cdot \hat{p}(\k,t), \\
  &= (-1)^{\lceil\beta\rceil} \int_{|\fai|=1} \Big( (\lambda- i\k\cdot\fai)^\beta  - \lambda^\beta \Big)~m(\fai)d\fai\cdot \hat{p}(\k,t),
  \end{split}
  \end{equation*}
  which equals to \eqref{anisoLapTempFT}.
\end{proof}


\end{document}